\documentclass[11pt]{amsart}

\usepackage[margin=2.4cm]{geometry}
\usepackage{amsmath}
\usepackage{mathabx}
\usepackage{amsfonts}
\usepackage{amsthm}
\usepackage{amssymb}
\usepackage{bm}
\usepackage{dsfont}
\usepackage[all]{xy}
\usepackage{eurosym} %simbolo euro
\usepackage{ulem}
\usepackage{pgfgantt}
\usepackage{color}
\usepackage{url}
\usepackage[T1]{fontenc}
\usepackage{stackrel}
\usepackage{float}

\usepackage{fancyhdr}

\newtheorem{theorem}{Theorem}
\newtheorem*{theorem*}{Theorem}
\newtheorem{definition}{Definition}
\newtheorem{lemma}{Lemma}
\newtheorem{prop}{Proposition}
\newtheorem{question}{Question}

\newtheorem{example}{Example}
\newtheorem{rk}{Remark}
\newtheorem{cor}{Corollary}
\newtheorem{nota}{Notations}

\newcommand{\N}{\mathds{N}}
\newcommand{\T}{\mathds{T}}
\newcommand{\Z}{\mathds{Z}} 
\newcommand{\R}{\mathds{R}}
\newcommand{\Q}{\mathds{Q}}

\title{Integrability  for conformally symplectic systems\\}
\author[M.-C. Arnaud] {Marie-Claude Arnaud}
\address{Universit\'e de Paris and Sorbonne Universit\'e, CNRS, IMJ-PRG, F-75006 Paris, France}
\email{marie-claude.arnaud@imj-prg.fr}
\thanks{$\ddag$   member of the {\sl Institut universitaire de France.}}

\author[X. Su] {Xifeng Su}
\address{School of Mathematical Sciences, Laboratory of Mathematics and Complex Systems (Ministry of Education)\\
Beijing Normal University\\
No. 19, XinJieKouWai St.,HaiDian District\\
 Beijing 100875, P. R. China}
\email{xfsu@bnu.edu.cn, billy3492@gmail.com}

\author[M. Zavidovique] {Maxime Zavidovique}
\address{Sorbonne Universit\'e, Universit\'e de Paris Cit\'e, CNRS, Institut de Math\'ematiques de Jussieu-Paris Rive Gauche, 75005 Paris, France}
\email{mzavidovi@imj-prg.fr}

\begin{document}

%\pagestyle{plain}
%\scriptsize
 
\normalsize

\maketitle

%{\color{blue} QUESTION OF ANNA: IS OUR RESULT TRUE FOR TEH TIEM DEPENDING CASE? I THINK YES...
%
%STATEMENTS TO WRITE:\begin{itemize}
%\item  IF EVERY POSITIVE ORBIT IS RELATIVELY COMPACT AND IF THERE IS NO CONJUGATE POINT ON THE NON-WANDERING SET (BETTER: ON THE CLOSURE OF THE UNION OF THE $\omega$-LIMIT SETS), THEN THE AS MASLOV INDEX VANISH AT EVERY POINT
%\item GIVE THE EXEMLE OF THE PENDULUM TO SHOW THAT SOMETIMES THE AS MASLOV INDEX IS NOT ZERO. REMARK: IN SOME WEAK SENSE, HOWEVER, THIS EXAMPLE IS INTEGRABLE (FOR CLAUDE, TEH ATTRACTOR IS A CURVE....)
%\item SPEAK ABOUT DIFFERENT NOTION OF ATTRACTORS IN THE INTRODUCTION, QUOTE BIRKHOFF ATTRACTOR?
%\item ARE OTHER NOTION OF INTEGRABILITY TO BE CONSIDERED? EG INVARIANT FOLIATION?
%\item {\color{red} Maxime : If no conjugate points and no fixed points in the attractor invariant foliation ?}
%\item ASSUME OUR HAMILTONIAN REMAINS HOPF INTEGRABLE WHEN CHANGING THE COHOMOLOGY CLASS (THE FACTOR?), CAN WE SAY SOMTEHING (AS SYMPLECTIC INTEGRABILITY)?
%\item NOTE SOMEWHERE THAT THE DYNAMICS ON THE INVARIANT TORUS CAN BE A LOT OF THINGS (MANE EXAMPLE, WITH STRONG OUTSIDE CONTARCTION), CONRARILY TO THE CASE OF SYMPELCTIC INTEGRABILITY)
%\end{itemize}
%}
%
%\vspace{15pt}

\section*{Introduction}

The goal of this paper is to study the dynamics of conformally symplectic Hamiltonian flows under the light of integrability. As the dynamics of conformally symplectic Hamiltonian flows are dissipative and differ fundamentally  from their conservative counterpart we start by proposing several notions of integrability that are better suited to the problem. We will propose two notions of integrability: 
$C^1$-integrability and Hopf integrability, that depend on the existence of a global attractor and on its shape. 

Then our main theorem focuses on Tonelli Hamiltonians whose conformally symplectic flows do not have conjugate points. We prove that such flows are automatically Hopf integrable.  The proof is geometric and studies the long time evolution of vertical subspaces under the flow. It also makes use of (discounted) weak KAM theory. We also establish several results about the asymptotic Maslov index for integrable conformally symplectic Hamiltonian flows. Finally, we describe some examples to illustrate differences between symplectic and conformally symplectic Hamiltonian flows and to illustrate the pertinence of our definitions of integrability.

\subsection*{History and origins of the problem} The starting point of this story is probably the Hopf conjecture, proved by Burago and Ivanov \cite{BuragoIvanov1994}:

\begin{theorem*}
A Riemannian metric on the $n$ dimensional torus whose geodesic flow does not have conjugate points is flat (meaning that it is conjugated to a constant metric).
\end{theorem*}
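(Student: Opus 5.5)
The plan follows Burago--Ivanov's strategy: first build invariant Lagrangian graphs from the absence of conjugate points (Green bundles), then use a volume-growth argument to force flatness.

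\textbf{Green bundles.} Let $g$ be a metric on $\T^n$ without conjugate points and lift it to $\R^n$. For every unit vector $v$ and every $T>0$, the image under the geodesic flow $\varphi_T$ of the vertical space at $\varphi_{-T}(v)$ is transverse to the vertical at $v$, because there are no conjugate points. It is therefore the graph of a symmetric matrix $S_T(v)$. Standard Riccati comparison shows that $S_T(v)$ is monotone in $T$ and uniformly bounded, using a bound on the curvature of the compact torus. So it converges to a Lagrangian, invariant, measurable Green bundle $G_+$. Symmetrically, $T\to-\infty$ gives $G_-$, with $G_-\le G_+$.

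\textbf{Averaging.} Integrating the Riccati equation along orbits and using invariance of the Liouville measure gives
\begin{equation*}
\int \operatorname{tr} S_+\, d\mu=0.
\end{equation*}
Combined with the Riccati inequality, this already yields Hopf's $2$-dimensional argument: total scalar curvature is $\le 0$, with equality only if the metric is flat. In dimension $n\ge 3$ this is not enough, so the torus topology must be used.

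\textbf{Burago--Ivanov argument.} This is the main obstacle. No conjugate points means every geodesic in $\R^n$ is minimizing. The plan has three steps.
\begin{enumerate}
\item Show that the stable norm $\|\cdot\|$ (the asymptotic norm of $d_g$) is strictly convex. Then, for each direction, the minimizers with that rotation vector give a Lipschitz invariant Lagrangian graph. Concretely, Busemann functions $b_\xi$ are $C^{1,1}$, and their gradients $\nabla b_\xi$ form a foliation parametrised by the unit sphere of the stable norm.
\item Define the map $\R^n\to\R^n$, $x\mapsto\bigl(b_{e_1}(x),\dots,b_{e_n}(x)\bigr)$, and compare the volume of large balls $B_R$ in $(\R^n,g)$ with the Euclidean volume of the corresponding stable-norm ball. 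A calibration/coarea argument gives $\operatorname{vol}_g(B_R)\ge c_n\operatorname{vol}_{\|\cdot\|}(R)$. The hard inequality is the reverse one. It comes from the fact that $|\nabla b_\xi|=1$ everywhere together with a Jacobian determinant estimate, namely Hadamard's inequality applied to the $1$-Lipschitz map.
\item Equality in these volume inequalities forces the gradients $\nabla b_\xi$ to be parallel vector fields. Hence the Levi-Civita connection has a parallel frame, the curvature vanishes, and $g$ is flat.
\end{enumerate}

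I expect the equality/rigidity case in step 2 to be the delicate point, since it must hold almost everywhere and then be upgraded by smoothness.
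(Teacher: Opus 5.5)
The paper gives no proof of this statement. It quotes Burago--Ivanov \cite{BuragoIvanov1994}, together with Heber's $C^0$-integrability \cite{heber}, purely as motivation. Only the first part of your plan has an in-paper counterpart: no conjugate points implies that extremals are minimizing, and the Green-bundle sandwich $S_{-\varepsilon}\le\text{Hess}\le S_\varepsilon$ gives Lipschitz control of the limiting graphs. In the discounted setting this is Propositions~\ref{PTonelli} and \ref{PLipschitzconstant} and Corollary~\ref{Corimagvert}. There it finishes the proof, because the limit graph is unique and is the attractor. In the Hopf setting it only gives the invariant Lipschitz graphs of your Step~1, which you assert rather than prove (strict convexity of the stable norm included). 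Flatness is then the entire remaining difficulty.

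Your Step~2 does not handle that difficulty. For $B=(b_{e_1},\dots,b_{e_n})$, Hadamard's inequality gives $|\det DB|\le\prod_i|\nabla b_{e_i}|=1$ relative to $\operatorname{vol}_g$. Since $B(x)=Lx+O(1)$, where $L$ has as rows the asymptotic covectors of the $b_{e_i}$, the area formula and a degree count on $[0,N]^n$ give $|\det L|\le\operatorname{vol}(\T^n,g)$. This is a \emph{lower} bound on the Riemannian volume in terms of the stable norm. It goes in the same direction as your coarea bound, not the reverse.

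Moreover, the argument only uses $|\nabla b|=1$ a.e.\ and periodicity of $b$ minus its linear part. These hold for the weak KAM solutions of $|c+du|_g=\|c\|^*$ for \emph{every} Riemannian metric on $\T^n$. So nothing built from these facts alone can force flatness.

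The missing idea is the place where the hypothesis actually enters. For each $x$, the map $\xi\mapsto\nabla b_\xi(x)$ is onto the whole unit sphere $U_x$, i.e.\ the invariant graphs fill the unit tangent bundle. This fails for general metrics. It has to be converted into an \emph{upper} bound on $\operatorname{vol}(\T^n,g)$, for instance by computing the Liouville volume of the unit codisc bundle leaf by leaf and comparing it with the volume of the dual stable-norm ball. That upper bound must then be matched against a general lower bound whose equality case is the flat one. Your Step~3 would be fine if equality in Hadamard's inequality held, since an orthonormal frame of gradients gives flat coordinates. But nothing in your sketch produces that equality.

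There is also a smaller error in the averaging paragraph. Integrating the traced Riccati equation $\dot S+S^2+R=0$ against Liouville measure gives $\int(\operatorname{tr}S_+^2+\operatorname{Ric})\,d\mu=0$, i.e.\ Green's $\int\mathrm{Scal}\le 0$. It does not give $\int\operatorname{tr}S_+\,d\mu=0$, and the latter fails in general without conjugate points: on a hyperbolic surface, $\operatorname{tr}S_+$ has constant sign.
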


In particular, the phase space $T^*\T^n$ is partitioned by invariant  $C^0$ Lagrangian tori. This later property is called $C^0$-integrability. It is weaker than integrability and was established by Heber \cite{heber}, for Riemannian metrics without conjugate points around the same time the Hopf conjecture was solved. 

The problem of generalizing this to a wider class of Hamiltonians was addressed in \cite{AABZ2015} where it is proved that Tonelli Hamiltonians without conjugate points on the torus are $C^0$-integrable. One missing feature of $C^0$-integrability, compared to integrability, is that it is not guaranteed that the dynamics restricted on each invariant torus is conjugated to a rotation. It is still open whether  Tonelli Hamiltonians without conjugate points on the torus are integrable. 
For symplectic twist maps  a similar theorem was proved in \cite{ChengSun1996,FlorioLeCa2021} in dimension 2 and in \cite{arco} in any dimension.  In dimension $2$, on the annulus a deeper study of $C^0$-integrable twist maps was carried out in \cite{Ar-Za1,Ar-Za2} (see also \cite{ZaBook}).

Hamiltonian flows that are conservative have a natural cousin: conformally symplectic Hamiltonian flows. Those do not preserve the ambient symplectic form, but have the effect of contracting it in positive time. In small dimension, the study of non conservative twist maps of the 2 dimensional annulus  dates back to Birkhoff who introduced the famous Birkhoff attractor (see \cite{LeCalvez1988}). Recently there has been an increasing interest in higher dimensional  versions, starting from \cite{MaroSorrentino2017,ArnFej2021}. The notion of Birkhoff attractor was extended to higher dimensional systems in \cite{AHV}.  
From a PDE point of view, such conformally symplectic Hamiltonian flows are closely linked to the so called discounted Hamilton-Jacobi equation (see \cite{LionsPV1987,DaviniFathiIturriagaZavidovique}) that will also play a key role in this paper.

The following paper therefore addresses the natural question of studying versions of integrability for conformally symplectic Hamiltonian systems {that are defined of the cotangent bundle of a closed manifold}. As such systems  can have at most one compact invariant Lagrangian submanifold  Hamiltonianly isotopic to  a graph, see \cite{ArnFej2021}, we will propose different definitions of integrability more suited to the problem.

\subsection*{Setting and main results}

We assume that $M$ is a $d$-dimensional closed manifold and that $\pi:T^*M\to M$ is its cotangent bundle that is endowed with its tautological 1-form $\lambda$ and its symplectic form $\omega=-d\lambda$.  If $H:T^*M\to \R$ is a $C^{1, 1}$ function (often called Hamiltonian) and $\alpha>0$, the $(\alpha, H)$-conformal Hamiltonian vector field $X_H^\alpha$ is defined by 
\begin{equation}\label{E1} \iota_{X_H^\alpha}\omega=\alpha\lambda +dH.\end{equation}
In other words, $X_H^\alpha$ is the sum of $\alpha$ times the Liouville vector field $Z_\lambda$ (that is defined by $\iota_{Z_\lambda}\omega=\lambda$) and the usual symplectic Hamiltonian vector field associated to $H$. This vector field is not always complete. We will prove in Proposition \ref{PropExistGlobAttr} that it is complete in positive times when $H$ is convex in the fiber direction and coercive.

When defined, the flow $(\varphi_t)$ of $X_H^\alpha$ is conformally symplectic, i.e. $\varphi_t^*\omega=e^{-\alpha t}\omega$ (see e.g. \cite{ArnFej2021}).

When $H$ is convex in the fiber direction and coercive, $X_H^\alpha$ has always a (compact) {\sl global attractor}, see Definition \ref{defglobattr} and Proposition \ref{PropExistGlobAttr}. Because the condition of having a global attractor is invariant by conjugacy, there are other cases  where there is a global attractor. The simplest example is the  Liouville vector field for which the zero section is the global attractor. There also exist conformal Hamiltonian vector fields that have no global attractor, see Example \ref{ExNoglobattr}.

Our first goal is to provide appropriate notions of integrability in the dissipative conformally symplectic case on $T^*M$ in the presence of a global attractor while in the conservative Hamiltonian case, integrability means that the cotangent bundle is foliated by invariant Lagrangian submanifolds.

\begin{definition}[Integrability]\rm
The conformally Hamiltonian vector field $X$  is \begin{itemize}
\item {\sl $C^1$-integrable} if there is a   global attractor $\mathcal{A} \subset T^*M$ that is a symplectically isotopic to the zero section $C^1$   submanifold;
\item Hopf integrable if  there is a  global attractor and this global attractor is a Lipschitz Lagrangian  graph.
\end{itemize}
\end{definition}

We will give an example of a $C^1$-integrable conformal Hamiltonian vector field  that is not Hopf integrable, see Section \ref{C1notHopf}. We also provide an example of a Hopf integrable conformal Hamiltonian vector field  that is not $C^1$-integrable, see Section \ref{HopfnotC1}. {Also, observe that there exist Tonelli Hamiltonians that are not Hopf integrable nor $C^1$-integrable: consider the case of the damped pendulum, see e.g. \cite{MaroSorrentino2017}. Can the attractor be more complicated than in this case? For twist maps there exist examples with a complicated Birkhoff attractor (it is an indecomposable continuum), but their suspension is not Tonelli and we have no example of such a complicated attractor for a Tonelli Hamiltonian.}
%\begin{question}
%Does there exist an example of conformal Hamiltonian vector field that is Hopf integrable but not $C^1$ integrable? And if moreover we assume some restriction on $H$ (e.g. convex in the fiber direction)?
%\end{question}

In 1994 and in the symplectic Hamiltonian setting, \cite{BuragoIvanov1994}, Burago and Ivanov proved Hopf's conjecture: a Riemannian metric on $\T^d$ that has no conjugate point (see Definition \ref{Defconjvect} ) is flat and so the dynamics is completely integrable.  For Tonelli Hamiltonians (see Definition \ref{DefTonelli}), this result was partially extended in 2015 in \cite{AABZ2015}.

Our first result gives a sufficient condition for a Tonelli conformal Hamiltonian vector field  to be Hopf integrable (conjugate points are defined in Definition \ref{Defconjvect} in Subsection \ref{ssecdefHopf}).

\begin{theorem}\label{ThHopf}
Let $H:T^*\mathbb{T}^d\to \R$ be a Tonelli Hamiltonian and let $\alpha>0$. 
Assume that the flow  of $X_H^\alpha$ has no conjugate points. Then, $X_H^\alpha$ is Hopf integrable.
\end{theorem}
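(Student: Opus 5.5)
The plan is to show that the global attractor $\mathcal A$ given by Proposition \ref{PropExistGlobAttr} coincides with the graph of $du_\alpha$, where $u_\alpha$ is the discounted weak KAM solution, and then that this graph is Lipschitz and Lagrangian. Here $u_\alpha$ is the unique viscosity solution of $\alpha u+H(x,du)=0$ on $\T^d$. It has the representation
$$u_\alpha(x)=\inf\Big\{\int_{-\infty}^0 e^{\alpha s}L(\gamma,\dot\gamma)\,ds\ :\ \gamma(0)=x\Big\},$$
where $L$ is the Lagrangian dual to $H$. We use two facts about $\mathcal A$: it is compact and invariant, and every point of it lies on a complete orbit that is bounded in both time directions. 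The argument has three steps.

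\textbf{Step 1: orbit segments minimize.} We show that the absence of conjugate points makes every orbit segment minimizing with fixed endpoints for the discounted action $\int_a^b e^{\alpha s}L(\gamma,\dot\gamma)\,ds$. Lift everything to $T^*\R^d$. Fix $\tilde q$ and a time $a$, and consider the discounted Euler--Lagrange flow starting from the fibre $T^*_{\tilde q}\R^d$. The absence of conjugate points says that the vertical fibre $V$ never again becomes non-transverse to $V$ under the flow. Hence the map $p\mapsto\pi\circ\varphi_t(\tilde q,p)$ is a local diffeomorphism for every $t>0$.

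By the Tonelli superlinearity, in the form of a priori velocity bounds for the discounted flow, this map is proper. It is therefore a diffeomorphism of $\R^d$. This gives a field of extremals emanating from $\tilde q$. The standard Weierstrass argument, applied with the weight $e^{\alpha s}$ (the weight only rescales the Lagrangian in time), shows that each such extremal is the unique minimizer between its endpoints. Projecting back to $\T^d$, every orbit segment minimizes the discounted action among curves in its homotopy class with the same endpoints. The action is bounded below, so a minimization over homotopy classes in the spirit of weak KAM theory then gives global minimization. I expect this globalization, including the properness in the conformal setting, to be the main obstacle.

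\textbf{Step 2: $\mathcal A\subset \mathrm{graph}(du_\alpha)$.} Take $z=(q_0,p_0)\in\mathcal A$ and its complete bounded orbit $(q(t),p(t))$. By the representation formula,
$$u_\alpha(q_0)\le\int_{-\infty}^0 e^{\alpha s}L(q,\dot q)\,ds.$$
For the reverse inequality, fix $T>0$ and let $\gamma$ be a backward calibrating curve for $u_\alpha$ at $q_0$. Join $q(-T)$ to $\gamma(-T)$ by a curve of bounded length on a unit time interval, and splice it with $\gamma$ on $[-T,0]$. Step 1 says $q$ minimizes on $[-T-1,0]$ with fixed endpoints, so comparing with this spliced curve gives
$$\int_{-T}^0 e^{\alpha s}L(q,\dot q)\,ds\le \int_{-T}^0 e^{\alpha s}L(\gamma,\dot\gamma)\,ds+Ce^{-\alpha T}.$$
The constant $C$ is uniform because $\mathcal A$ and the calibrated curves are compactly bounded. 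Letting $T\to\infty$ shows that $q$ is $u_\alpha$-calibrated on $(-\infty,0]$. Calibration then yields that $u_\alpha$ is differentiable at $q_0$ with $du_\alpha(q_0)=p_0$. Hence $\mathcal A\subset\mathrm{graph}(du_\alpha)$, and over each $q$ there is at most one point of $\mathcal A$.

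\textbf{Step 3: equality and Lipschitz Lagrangian graph.} The same argument applies to any point of $T^*\T^d$ lying on a complete orbit that is bounded in negative time. Thus every backward calibrated orbit is forward minimizing, which extends calibration past $t=0$. So through every $x\in\T^d$ the calibrated orbit extends to a complete calibrated orbit. This shows that $u_\alpha$ is differentiable everywhere. It also shows that $\mathrm{graph}(du_\alpha)$ is invariant, hence contained in the maximal compact invariant set $\mathcal A$. Therefore $\mathcal A=\mathrm{graph}(du_\alpha)$.

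For the Lipschitz property, $u_\alpha$ is semiconcave because it is a viscosity solution of a Tonelli equation. On the other hand, the graph is invariant and carries no conjugate points, so along it we can define the Green bundle $G_-=\lim_{t\to+\infty}D\varphi_t V$ at each point. These are Lagrangian planes transverse to $V$, and by compactness their slopes are uniformly bounded. This yields a uniform semiconvexity bound on $u_\alpha$. Semiconcave and semiconvex together make $u_\alpha$ of class $C^{1,1}$, so $\mathcal A$ is a Lipschitz exact Lagrangian graph, and $X_H^\alpha$ is Hopf integrable.
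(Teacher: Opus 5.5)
\textbf{Gap in Step~1: properness is unproved.} The claim that $p\mapsto\pi\circ\varphi_t(\tilde q,p)$ is a local diffeomorphism is fine, but properness does not follow from superlinearity. A priori velocity bounds such as Lemma~\ref{aprioricompactness} hold only for extremals with bounded discounted action. An extremal leaving $\tilde q$ with huge momentum and landing in a bounded set has no such bound.

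Properness in fact fails for Tonelli systems with conjugate points. For $H=\frac12 g^{ij}(q)p_ip_j$ one has $\pi\circ\varphi_t(\tilde q,p)=\exp_{\tilde q}\big(\frac{1-e^{-\alpha t}}{\alpha}g^{-1}p\big)$. A contractible closed geodesic of a metric on $\T^2$ then gives $|p_n|\to\infty$ with $\pi\circ\varphi_t(\tilde q,p_n)=\tilde q$. Given the local diffeomorphism property, properness is equivalent to uniqueness of extremals between two points of $\R^d$, which is exactly what you need, and you leave it open.

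The missing idea is the mountain-pass argument of Proposition~\ref{PTonelli}. Two distinct extremals from $q_1$ to $q_2$ in time $t$ would both be strict local minima of the action, because there are no conjugate points. Modify $L$ to be quadratic at infinity, so the action on $H^1$ is $C^2$ and Palais--Smale. The mountain pass theorem then gives a third critical point at level at most $C$ that is not a strict local minimum. Lemma~\ref{aprioricompactness} shows it is an extremal of the original Lagrangian, which is a contradiction. Uniqueness plus existence of minimizers gives bijectivity (Corollary~\ref{Corimagvert}) with no properness or field-of-extremals argument.

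\textbf{The globalization claim in Step~1 is false.} Minimizing in every homotopy class does not give global minimization on $\T^d$. For $H=\frac12|p|^2$, the extremal in $\R^d$ from $a$ to $b$ in time $t$ has action $\frac{\alpha|b-a|^2}{2(1-e^{-\alpha t})}$. So an orbit with large momentum that wraps around the torus is beaten by a closer lift of its endpoint.

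Your Step~2 as written relies on this claim. The points $\tilde q(-T)$ and $\tilde\gamma(-T)$ (with $\tilde\gamma(0)=\tilde q(0)$) may be at distance of order $T$ in $\R^d$, so your bounded-length junction changes the homotopy class. This is repairable: join $\tilde q(-T)$ to $\tilde\gamma(-T/2)$ by a segment on $[-T,-T/2]$. It has speed at most $3C$ and costs $O(e^{-\alpha T/2})$, and the argument then uses only minimization in $\R^d$.

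\textbf{Comparison with the paper.} With Proposition~\ref{PTonelli} granted, your Steps~2--3 give a genuinely different and shorter route to $\mathcal A=\mathrm{graph}(du_\alpha)$. The paper instead takes limits of $\partial_2\widetilde{\mathcal A}_t(q_0(-t),\cdot)$, proves their $\Z^d$-periodicity, and gets uniqueness from the growth of $\int p\,dq$ in negative time.

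\textbf{The Lipschitz step.} Your Green-bundle sentence does not explain how bounded limit slopes give semiconvexity. What works is forward calibration. Let $y$ be the projection of the time-one image of $(x,du_\alpha(x))$. Then $u_\alpha(q)\ge e^{\alpha}\big(u_\alpha(y)-\widetilde{\mathcal A}_1(q,y)\big)$, with equality at $q=x$. Moreover $\widetilde{\mathcal A}_1(\cdot,y)$ is $C^2$ with locally uniform bounds, by the first-variable analogue of Corollary~\ref{Corimagvert} and Proposition~\ref{PLipschitzconstant}.
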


We will give an example of a Hopf and $C^1$ integrable $X_H^\alpha$ such that $H$ is Tonelli and there are conjugate points, see Section \ref{ssecExHopfintConjPts}. 

\begin{question} Is the conclusion of Theorem \ref{ThHopf} true if
\begin{itemize}
\item the global hypothesis is replaced by a local one: we assume that there is a neighborhood of the global attractor that contains no pair of conjugate points;
\item we don't assume that $H$ is Tonelli (maybe with some mild hypotheses, as convexity and coercivity)?
\end{itemize}
\end{question}

\begin{question}
Is it possible to be Hopf integrable with pairs of conjugate points in every neighborhood of the global attractor? 
\end{question}

We then raise the question of a possible reciprocal statement. The example of Section \ref{ssecExHopfintConjPts} implies that the conclusion of such a statement cannot be that there is no conjugate point at all. The pertinent notion for such a result is the one of {\sl dynamical Maslov index}. Useful references on these topics are the book of Maslov and specially the Appendix of Arnol'd, \cite{Arnold1972} and the articles \cite{CGIP2003, ArnaudFlorioRoos2022}. Roughly speaking, the dynamical Maslov index says how many times in (temporal) mean the images by the linearized flow of some Lagrangian subspaces  cross the tangent spaces to the (vertical) fibers. For example, an orbit with no conjugate points has zero dynamical Maslov index, but there exist orbits with zero dynamical Maslov index that have conjugate points. Our result is 
\begin{theorem}\label{ThintzeroasymptMasInd}
Let $H:T^*M\to \R$ be a   Hamiltonian and let $\alpha>0$. Assume that $X^\alpha_H$   is Hopf integrable or $C^1$-integrable. Then the dynamical Maslov index of every orbit is $0$. 
\end{theorem}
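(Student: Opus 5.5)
Since all orbits are attracted to the global attractor $\mathcal{A}$, which is compact, I will compute the dynamical Maslov index of an orbit from the rotation of a Lagrangian subspace along it. The definition I use is the standard one (as in \cite{CGIP2003, ArnaudFlorioRoos2022}): the limit, when it exists, of $\frac1T$ times the Maslov intersection index with the vertical distribution $V(x)=\ker D\pi(x)$ of the path $t\mapsto D\varphi_t(x)L$, $t\in[0,T]$. This limit does not depend on the Lagrangian subspace $L\subset T_x(T^*M)$. Independence holds because the index of two Lagrangian paths $D\varphi_t L$ and $D\varphi_t L'$ differs by a quantity bounded by a constant depending only on $d$. This bound comes from the bounded difference of Maslov indices for pairs of Lagrangian paths, and it uses only that $D\varphi_t$ is conformally symplectic: $D\varphi_t$ maps Lagrangian subspaces to Lagrangian subspaces because $\varphi_t^*\omega=e^{-\alpha t}\omega$. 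So, for each orbit, the plan is to exhibit one convenient Lagrangian path whose Maslov index with $V$ stays bounded in $T$. A uniformly bounded index divided by $T$ tends to $0$.

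\textbf{Orbits on the attractor.} Take first $x\in\mathcal{A}$. In the $C^1$-integrable case, $\mathcal{A}$ is a $C^1$ Lagrangian submanifold; it is Lagrangian because it is symplectically isotopic to the zero section. Choose $L=T_x\mathcal{A}$. Invariance gives $D\varphi_t L=T_{\varphi_t x}\mathcal{A}$. This path stays inside the continuous Lagrangian subbundle $T\mathcal{A}$ over the compact, connected manifold $\mathcal{A}$. Compare it with the path $T_{\psi(\varphi_t x)}$ of a graph, pulled back along the symplectic isotopy $\psi$ from the zero section. The tangent spaces to the zero section are transverse to $V$. The isotopy is a compact family, so it changes the Maslov index by a bounded amount. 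Hence the index of the path $t\mapsto T_{\varphi_t x}\mathcal{A}$ relative to $V$ is bounded. In the Hopf case, $\mathcal{A}$ is a Lipschitz Lagrangian graph and tangent spaces may fail to exist. Here I replace $T\mathcal{A}$ by the cones of limits of tangents, i.e.\ the Clarke generalized derivatives. The Lipschitz graph property means the set $\mathcal{G}$ of Lagrangian subspaces obtained as limits of $T_y\mathcal{A}$ is contained in $\{\text{graphs of symmetric matrices with norm}\le K\}$. This set is contractible and uniformly transverse to $V$. Because $\mathcal{A}$ is an invariant graph, $D\varphi_t$ preserves these limit spaces at differentiability points. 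At points $y$ where $\mathcal{A}$ is differentiable, $D\varphi_t(T_y\mathcal{A})$ stays transverse to $V$ for all $t$. The index along such an orbit is therefore $0$. For a general $x\in\mathcal{A}$, I approximate by differentiable points using Rademacher's theorem, which gives a.e.\ differentiability in the base. The bounded-difference property handles the comparison over any finite time.

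\textbf{Orbits off the attractor.} For $x\notin\mathcal{A}$, the orbit converges to $\mathcal{A}$ uniformly on compact sets. Fix $\varepsilon>0$. After a time $T_0$, $\varphi_t x$ lies in a small neighborhood $U$ of $\mathcal{A}$. On $U$ I extend the Lagrangian field continuously: by a Lagrangian foliation near a $C^1$ attractor, or in the Lipschitz case by graphs of the derivatives of a mollified generating function. I then need to show that $D\varphi_t$ maps this field, restricted to $U$, into a neighborhood of itself up to a bounded error. This is where conformality helps. The flow contracts $\omega$, and the attractor is a graph, so Lagrangian subspaces are attracted toward the tangent directions of the attractor, in the spirit of a dominated splitting. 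The index accumulated on $[0,T_0]$ is finite, and afterwards it is bounded. Dividing by $T$ gives $0$.

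\textbf{Main obstacle.} The hardest step is this last one, together with the Lipschitz non-smoothness. I need the linearized flow along orbits approaching a merely Lipschitz attractor to keep some Lagrangian subspace uniformly transverse to $V$, or at least to accumulate a bounded index. My first attempt is to use the cone characterization of the Lipschitz graph. Since $\mathcal{A}$ attracts every compact set, the images $\varphi_t(\text{graph}(du))$ of smooth graphs converge in Hausdorff distance to $\mathcal{A}$. From this I want a uniform bound on the slopes of the tangent spaces of these images. Such a bound gives transversality to $V$ for large $t$, and hence a bounded index.
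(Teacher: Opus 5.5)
Your treatment of orbits on the attractor is essentially the paper's. In the $C^1$ case you bound the index of arcs in $T\mathcal A$. In the Lipschitz case you use limit planes that are graphs of uniformly bounded matrices and stay graphs under $D\varphi_t$. You then use the $4(d+1)$ bound to change $L$.

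\textbf{The gap is in the step for orbits off the attractor.} You claim that, because the flow contracts $\omega$ and $\mathcal A$ is a graph, Lagrangian planes along nearby orbits are attracted toward $T\mathcal A$ ``in the spirit of a dominated splitting''. This does not follow from the hypotheses. Conformality only pairs exponents as $\lambda$ and $-\alpha-\lambda$; it gives no domination of the normal direction by the tangent one.

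The paper's own Hopf integrable example in Section~\ref{HopfnotC1} contradicts the mechanism. Near $(0,0)$ the flow is linear, with eigenvalue $-\beta/2$ on $(1,0)$ and $\beta/2-\alpha\in(-\beta/2,0)$ on $(1,\beta-\alpha)$. For $x\in(0,\varepsilon]$ the attractor is the horizontal axis, so it is tangent to the \emph{strong} stable direction. Every other line is pushed toward the weak direction $(1,\beta-\alpha)$, away from $T\mathcal A$.

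Your fallback does not work either. Hausdorff convergence of $\varphi_t(\mathrm{graph}(du))$ to $\mathcal A$ gives no control on the tangent planes of these images, so it cannot give the transversality to $V$ you need. You are also aiming for an index that stays bounded along the whole tail. That is more than is needed, and nothing in the hypotheses seems to provide it.

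\textbf{Missing idea: sublinear growth is enough, and it follows from continuity over a fixed finite time window.} Let $C$ be the uniform bound valid for every point of the attractor, every $L\in\mathbb L_x$ and every interval. Given $\varepsilon>0$, fix $T$ with $2C/T<\varepsilon$. Use continuity of $D\varphi_s$ for $s\in[0,T]$ and compactness of the attractor and of the Lagrangian Grassmannian over it. This gives a neighborhood $\mathcal N$ of the attractor such that $\bigl|\mathrm{MI}(D\varphi_sL)_{s\in[0,t]}\bigr|\le 2C$ for all $x\in\mathcal N$, $L\in\mathbb L_x$ and $t\in[0,T]$. Choose $\tau$ with $\varphi_t(x)\in\mathcal N$ for $t\ge\tau$. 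Cutting $[\tau,t]$ into windows of length $T$ and using additivity gives
\[
\bigl|\mathrm{MI}(D\varphi_sL)_{s\in[0,t]}\bigr|\le \bigl|\mathrm{MI}(D\varphi_sL)_{s\in[0,\tau]}\bigr|+2C\Bigl(\frac{t-\tau}{T}+1\Bigr).
\]
Hence $\limsup_{t\to\infty}\frac1t\bigl|\mathrm{MI}(D\varphi_sL)_{s\in[0,t]}\bigr|\le 2C/T<\varepsilon$, and $\varepsilon$ is arbitrary. This argument needs neither a Lagrangian field near the attractor nor any attraction of Lagrangian planes.
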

 Another interesting question is about the relations between the notion of integrability for the symplectic Hamiltonian vector field $X_H$ and the notion of integrability of the conformal Hamiltonian vector field $X_H^\alpha$. Does the integrability of one of the two implies the integrability of the other?\\
 We recall that $X_H$ is said to be integrable if there is a foliation of $T^*M$ into graphs whose  leaves are flow invariant Lagrangian submanifolds. \\
 The example of Section \ref{ssecExHopfintConjPts} is an example where $X_H$ is not symplectically integrable but $X_H^\alpha$ is integrable for all small enough $\alpha>0$.

%{\color{blue} DO WE HAVE AN EXAMPLE WHERE $X_H$ IS INTEGRABLE BUT NOT $X_H^\alpha$?}

\subsection{Notations}\begin{itemize}
\item We will use the notation ${\mathcal V}(q)=T^*_qM$ and $V(x)=T_x{\mathcal V}(q)\subset TT^*M$ for $x=(q,p)\in T^*M$. 
\item If $(\varphi_t)$ is a flow, if $x\in T^*M$, we introduce the notation $G_t(x)=D\varphi_t\big(\varphi_{-t}(x)\big)\Big(V\big(\varphi_{-t}(x)\big)\Big)$ for $t\in\R$.  
\item When $H:T^*M\to\R$ is Tonelli (see below), a Lagrangian function $L:TM\to \R$ is associated to $H$. It is defined by $L(q,v) = \max_{p\in T^*_q M}p(v) - H(q,p)$. The function $\mathcal L_H : TM \to T^*M$ given by $(q,v) \mapsto \left(q, \partial_v L(q,v)\right)$ is called the Legendre  map and is a $C^1$-diffeomorphism whose inverse is $\mathcal L^{-1}_H : T^*M \to TM$ given by $(q,p) \mapsto \left(x, \partial_p H(q,p)\right)$.
The conformal Euler-Lagrange flow with factor $\alpha>0$ is the flow generated by the vector field $ Y^\alpha_L = \mathcal L_{H*} X^\alpha_H$.
Finally, the equality $H\left(q,\partial_v L(q,v)\right)+L(q,v) = \partial_v L(q,v)(v)$ holds for all $(q,v)\in TM$.
 
\end{itemize}

\subsection{Several definitions}\label{ssecdefHopf}
Those definitions will be regularly used throughout the paper.
\begin{definition}\rm
A Hamiltonian $H$ is said to be
\begin{itemize}
\item  convex in the fiber direction if for every $q\in M$, the restriction of $H$ to the linear space $T^*_qM$ is convex;
\item  coercive if for every $C\in \R$, there exists a compact subset $K\subset T^*M$ such that
$$\forall (q, p)\in T^*M\backslash K, \quad H(q, p)>C.$$
\end{itemize}  
\end{definition}

\begin{definition}\label{DefTonelli}\rm
A $C^2$ Hamiltonian $H:T^*M\to \R$ is {\sl Tonelli} if
\begin{itemize}
\item $H$ is $C^2$ convex in the fiber direction, i.e. at every point the Hessian $\partial^2_{p p}H(q, p)$ is positive definite;
\item $H$ is superlinear in the fiber direction, i.e.
$$\lim_{\|p\|\to\infty}\frac{H(q, p)}{\| p\|}=+\infty.$$
\end{itemize}
\end{definition}

\begin{definition}\label{Defconjvect}\rm
Let   $X_H^\alpha$ be a conformal Hamiltonian vector field and let $(\varphi_t)$ be its flow. Two non-zero vectors $u, v\in T(T^*M)$ are {\sl conjugate} if
\begin{itemize}
\item $D\pi (u)=0$, \ $D\pi(v)=0$;
\item $\exists T\neq 0,\quad D\varphi_T u=v$.
\end{itemize}
Then we say also that the pair $(x,y)\in (T^*M)^2$ such that $u\in T_x(T^*M)$, $v\in T_y(T^*M)$ is a pair of  {\sl conjugate points}. In other words, $(x,y)$ is a pair of conjugate points  if for some $T\neq 0$, we have $D\varphi_T(x){  V}(x)\cap {  V}(y)\neq \{0\}$.

\end{definition}

\subsection{Reminders of weak KAM theory}We consider the following discounted Hamilton-Jacobi equation:
\begin{equation}\label{discountedHJequation}
\alpha u(q) + H\big(q, D u(q) \big) =0
\end{equation}where $\alpha>0$. 
When $H$ is coercive, there is a unique viscosity  solution $u_\alpha$ of \eqref{discountedHJequation}. When $H$ is Tonelli, $u_\alpha$ is the discounted weak KAM solution and  can be represented by the following formula
\begin{equation}\label{formuladiscweakKAMsolution}
u_\alpha(q)  = \inf_{\gamma} \int_{-\infty}^0 e^{\alpha s}  L\big(\gamma(s), \dot{\gamma}(s)\big)  \ ds, \qquad \forall ~q\in M
\end{equation}
where the infimum is taken over all absolutely continuous curves $\gamma: (-\infty, 0] \rightarrow M$ with $\gamma(0) =q$ and $L$ is the Lagrangian function associated to $H$.
 \begin{rk}\label{rkexactLagweakKAM}\rm Observe that if $u:M\to\R$ is a $C^{1, 1}$ function, i.e. a differentiable function whose derivative is Lipschitz, and if the graph of $Du$ is invariant by the  flow of $X_H^\alpha$ for some $H:T^*M\to\R$ that is Tonelli and $\alpha>0$, then $u$ satisfies a discounted Hamilton-Jacobi  equation 
\begin{equation}\label{discountedHJequationconstant}
\alpha u(q) + H\big(q, Du(q)  \big) =C.
\end{equation} 
Hence, there exists a constant $\kappa\in \R$ such that $u+\kappa$ satisfies \eqref{discountedHJequation} and is the discounted weak KAM solution.
Indeed, at Lebesgue almost every $q\in M$, $Du$ is differentiable. Because $X_H^\alpha$ is tangent to the graph of $Du$ and this graph is $C^0$-Lagrangian, we have at Lebesgue almost every $q\in M$
$$\forall v\in T_qM, \quad \omega\big(X_H^\alpha, (v, D^2u(q)v)\big)=0,$$
hence because $i_{X_H^\alpha}\omega=DH+\alpha\lambda$, 
 $$\forall v\in T_qM, \quad DH\big(q, Du(q)\big)(v, D^2u(q)v)+\alpha D u(q)v=0.$$
 \end{rk}

\subsection{Structure of the article}
In Section \ref{Sattractors}, we will explain the concept of  global attractor and give conditions that imply the existence of such a global attractor. In Section \ref{ssecExHopfintConjPts}, we will give an example of a Tonelli Hamiltonian that is Hopf and $C^1$ integrable but has conjugate points. In section \ref{SProoThHopf}, we will prove Theorem
 \ref{ThHopf}. In Section \ref{SProofTH2}, we will prove Theorem \ref{ThintzeroasymptMasInd}. In section \ref{C1notHopf}, we provide a
 $C^1$ integrable example  that is not Hopf integrable (and so has conjugate points). In section \ref{HopfnotC1}, we give an example of a Tonelli Hamiltonian that is Hopf integrable but not $C^1$ integrable.

%{\color{green} (OLD VERSION)We plan to look for a relevant notion of integrability in the dissipative conformally case on a cotangent bundle $\mathcal M=T^*M$ and to study connection between integrability and conjugate points (or dynamical Maslov index). For instance,
%
%\begin{conjecture}[Hopf]
%No conjugate points $\Longrightarrow$ integrability ?
%\end{conjecture}
%
%\begin{conjecture}
%Asymptotic Maslov index $= 0$ $\Longleftrightarrow$ integrability ?
%\end{conjecture}}

\section{Attractors}\label{Sattractors}
%{\color{blue} FAUT-IL METTRE LE FAIT QUE PRESQUE TOUT POINT EST DANS L'ENSEMBLE INSTABLE DE L'INFINI?}{\color{red} je veux bien, je sais pas faire ! (Maxime)}
%

The following Proposition shows why it is only relevant to look for compact attractors in positive time.

Let $H :T^*M \to \R$ be a $C^{1,1}$ Hamiltonian and $\alpha >0$. Let $\varphi : \mathcal D_H \to T^*M$ be its $\alpha$-conformally symplectic flow where $  \mathcal D_H\subset \R \times T^*M$ is the domain of the flow (that is not necessarily complete). If $x\in T^*M$, we set $I_x\subset \R$ the maximal interval on which the orbit of $x$ is defined.

\begin{prop}
 Then there is a set $A\subset T^*M$ of full Lebesgue measure such that all orbits starting in $A$ exit all compact sets in negative time:  for all $x\in A$, for all $K\subset T^*M$ compact subset, there is $0> t_K \in I_x$ such that  $\varphi_t(x)\notin K$ for all $t\in I_x$ such that $t<t_K$.
%{\color{red} MC: I WOULD PREFER: all $t\in I_x$ SUCH THAT $t<t_K$
% } 
 \end{prop}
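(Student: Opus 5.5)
Since $\varphi_t^*\omega=e^{-\alpha t}\omega$, the volume form $\omega^d$ satisfies $\varphi_t^*\omega^d=e^{-d\alpha t}\omega^d$. Thus, wherever it is defined, the flow multiplies Liouville measure $\mu$ by $e^{-d\alpha t}$. Running backward, it expands volume exponentially. My plan is to turn this into a Poincaré-recurrence-type argument in negative time.

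The first step is to exhaust $T^*M$ by countably many compact sets, for instance closed balls $K_n$ for a fixed metric, with $K_n\subset \mathrm{int}\,K_{n+1}$. It then suffices to show, for each fixed compact $K$, that the set
\[
B_K=\{x\in T^*M:\ \text{there exist } t_j\in I_x,\ t_j\to \inf I_x,\ \varphi_{t_j}(x)\in K\}
\]
has Lebesgue measure zero. Every compact set lies in some $K_n$, so $A=T^*M\setminus\bigcup_n B_{K_n}$ then works. Here I must treat the case $\inf I_x>-\infty$ separately. If the backward orbit blows up in finite time, then it eventually leaves every compact set, by the standard fact on maximal solutions of ODEs with locally Lipschitz fields; $X_H^\alpha$ is locally Lipschitz since $H$ is $C^{1,1}$. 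So those points are automatically in $A$, and I only need to handle $\inf I_x=-\infty$ with returns at times $t_j\to-\infty$.

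The second step is to cover $B_K$ (restricted to points with $\inf I_x=-\infty$) by sets of the form
\[
C_N=\bigcup_{k\ge N}\{x:\ \text{there is } t\in[-k-1,-k]\cap I_x \text{ with } \varphi_t(x)\in K\}.
\]
Since $B_K\subset C_N$ for every $N$, it suffices to show $\mu(C_N)\to 0$. The set $\{x:\varphi_t(x)\in K \text{ for some } t\in[-k-1,-k]\}$ equals $\bigcup_{s\in[0,1]}\varphi_{k+s}(K_s)$ for the appropriate domains $K_s\subset K$. This is contained in $\varphi_{k}\big(\varphi_{s}(K)\cap\ldots\big)$, so it lies inside $\varphi_k(K')$, where $K'=\bigcup_{s\in[0,1]}\varphi_s(K\cap\text{domain})$. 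This $K'$ is a set of finite measure; I can localize to be safe, noting that by continuity of the flow on its open domain, $\varphi_{[0,1]}$ of a compact piece where defined has bounded measure. Its image $\varphi_k(K')$ has measure at most $e^{-d\alpha k}\mu(K')$, using the change of variables formula on the domain where $\varphi_k$ is defined. Summing over $k\ge N$ gives a geometric tail, so $\mu(C_N)\le \frac{e^{-d\alpha N}}{1-e^{-d\alpha}}\mu(K')\to 0$. This is Borel--Cantelli.

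The main obstacle is the bookkeeping caused by the flow not being complete. I must ensure that $K'$ has finite measure and that the Jacobian identity applies on the partial domain of $\varphi_k$. To handle $K'$, I would first replace $K$ by $K$ intersected with the set where $\varphi_s$ is defined for $s\in[0,1]$. More simply, I would write $x\in$ the $k$-th set as $x=\varphi_{k}(y)$ with $y=\varphi_{-k}(x)$ and $\varphi_{t+k}(y)\in K$, i.e. $y\in \varphi_{[-1,0]}(K)$ restricted to the domain. Backward-time images of a compact set over a bounded time interval may escape to infinity, so I would further intersect with $K_m$ and let $m\to\infty$, obtaining measure zero for each $m$ by the same estimate.
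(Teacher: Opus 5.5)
Your strategy is sound and close to the paper's. Both arguments use that $\varphi_{-t}$ expands Liouville volume by $e^{d\alpha t}$, and that the sets of points sent into a fixed compact set at times $-k\delta$ have summable measures. The paper finishes differently: it shows that the set $\widetilde B$ of points returning infinitely often to $K_\delta$ satisfies $\varphi_{-\delta}(\widetilde B)\subset\widetilde B$ and has finite measure, which forces measure zero. Your Borel--Cantelli tail bound is a slightly more direct way to conclude. Your explicit countable exhaustion and your separate treatment of finite-time backward blow-up are points the paper leaves implicit.

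There is, however, a genuine gap exactly at the step you flagged, and your repair does not close it.

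\emph{Why the unit step fails.} With a unit time step, $K'=\bigcup_{s\in[0,1]}\varphi_s(K\cap\mathrm{dom}\,\varphi_s)$ need not be bounded, because some points of $K$ may blow up in forward time less than $1$. Then $\{(s,z)\in[0,1]\times K : s\in I_z\}$ is not compact, and continuity of the flow on its open domain gives no bound on $\mu(K')$.

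\emph{Why intersecting with $K_m$ does not help.} For $x\in B_K$, the points you actually use are $\varphi_{-k_j}(x)=\varphi_{s_j}(z_j)$ with $z_j\in K$ and $s_j\in[0,1]$. These can leave every $K_m$: nothing prevents $z_j$ from accumulating at a point of $K$ whose forward orbit blows up before time $\lim s_j$. So you have not shown that $B_K$ is covered by the null sets you obtain for each fixed $m$.

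\emph{The missing idea} is the paper's choice of time step.
\begin{itemize}
\item The domain of the flow is open and contains $\{0\}\times K$, so by compactness there is $\delta>0$ with $[-\delta,\delta]\times K$ inside it. Hence $K_\delta=\varphi([-\delta,\delta]\times K)$ is compact.
\item A return $\varphi_t(x)\in K$ with $t\le 0$ gives $\varphi_{-k\delta}(x)\in K_\delta$ for $k=\lceil -t/\delta\rceil$.
\item The set $\{x : \varphi_{-k\delta}(x)\in K_\delta\}=\varphi_{k\delta}(K_\delta\cap\mathrm{dom}\,\varphi_{k\delta})$ has measure at most $e^{-d\alpha k\delta}\mu(K_\delta)$.
\end{itemize}
With this step, your tail estimate goes through verbatim.

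Alternatively, for $K$ a compact domain with smooth boundary, one can prove $\mu(K')<\infty$ by a flux computation, using $\varphi_s^*(\iota_X\omega^d)=e^{-d\alpha s}\iota_X\omega^d$ for $X=X_H^\alpha$. That argument, however, would still have to be supplied.
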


\begin{proof}
We argue by contradiction. Assume there is a positive measure set $B \subset T^*M$ and a compact set $K\subset T^*M$ such that for all $x\in B$, the flow $\varphi_t(x)$ is defined for all $t<0$ and there is a sequence $t_n\to -\infty $ such that $\varphi_{t_n}(x) \in K$ for all $n$. 
%By regularity of the Lebesgue measure there exists $K_0 \subset B$ compact with $Leb(K_0)>0$. 
%And up to enlarging $K$ we assume $K_0 \subset K$. 
 By compactness there exists $\delta>0$ such that the flow is uniformly defined for $t\in [-\delta, \delta]$, for all $x\in K$. We set $K_\delta = \bigcup_{t\in [-\delta, \delta]} \varphi_t(K)$ that is also compact. Note that if $x\in B$, there is an increasing sequence of integers $ k_n \to +\infty$ such that $\varphi_{-k_n\delta}(x) \in K_\delta$.

Let now $\widetilde B\subset T^*M$ the set of $x\in T^*M$ such that $ (-\infty , 0] \subset I_x $ and there is an increasing sequence of integers $ k_n \to +\infty$ such that $\varphi_{-k_n\delta}(x) \in K_\delta$. This set clearly verifies $\varphi_{-\delta}(\widetilde B) \subset \widetilde B$. Moreover it has finite Lebesgue measure. Indeed $\widetilde B \subset \bigcup_{k\geqslant 0} \varphi_{k\delta} (K_\delta^k)$ where $K_\delta^k$ is the set of $x\in  K_\delta$ such that $ (-\infty, k\delta] \subset I_x $. As the flow is conformally symplectic, $Leb\big( \bigcup_{k\in \N} \varphi_{k\delta} (K_\delta^k)\big) \leqslant \frac{1}{1-e^{-d\alpha\delta }}Leb(K_\delta)$. It follows that $Leb(\widetilde B)\geqslant Leb\big(\varphi_{-\delta}(\widetilde B)\big) = e^{d\alpha \delta} Leb(\widetilde B)$ leading to $Leb(\widetilde B)=0$, a contradiction as $B\subset \widetilde B$.

%
%We now partition $K_0$ according to the first entry time in $K_\delta$ setting for $k>0$, 
%$$A_k = \{ x\in K_0 , \ \ \varphi_{-k\delta}(x) \in K_0 \ \ {\rm and}\ \   \forall 1\leqslant j \leqslant k-1, \  \varphi_{-j\delta}(x) \notin K_0  \}.$$ 
%Note that the sets $\varphi_{-k\delta}(A_k)$ are also pairwise disjoint. Indeed, if by contradiction  $x\in \varphi_{-k\delta}(A_k)\cap \varphi_{-k'\delta}(A_{k'})$ for some $1\leqslant k<k'$. Then $\varphi_{-(k'-k)\delta} \big(\varphi_{k'\delta}(x)\big) = \varphi_{k\delta}(x) \in A_k \subset K$ thus contradicting the minimality of $k'$ as  $\varphi_{k'\delta}(x)\in A_{k'}$.
%
%
%Ensemble des points qui reviennent une infinité de fois dans $K_\delta$ de mesure finie (inclus dans union des images itérées) et invariant donc de mesure nulle.
\end{proof}

\begin{definition}\label{defglobattr}\rm
Let $(\varphi_t)$ be a flow defined for $t\geq 0$ on a manifold $N$ and let $K$ be a non-empty compact subset of $N$. 
\begin{itemize}
\item $K$ is an attractor if there is an open set $U$ such that $K\subset U$, for every $t>0$, $\varphi_t(\overline{U})\subset U$ and $K=\bigcap\limits_{t>0}\varphi_t(U)$.
\item $K$ is the global attractor if it is an attractor and if for every neighborhood $V$ of $K$ and every $x\in N$, there exists $t\in\R$ such that $\varphi_t(x)\in V$.
\end{itemize}
\end{definition}

\begin{rk}\rm
For the first point, the condition that $\varphi_t(\overline{U})\subset U$ can be removed. Indeed, assume only the existence of  an open neighborhood $U$ of $K$ such that  $K=\bigcap\limits_{t>0}\varphi_t(U)$. Let $V\subset N$ be an open set such that $K\subset V \subset \overline V \subset U$. Then $K=\bigcap\limits_{t>0}\varphi_t(\overline V)$.  It is proved in \cite[Theorem 1.4]{Fathi22} that there exists a smooth Lyapunov  function $f : N \to [0,+\infty)$ such that $f^{-1}\{0\} = K$ and $Df(x)\cdot X(x) <0$ for all $x \in \overline V\setminus K$ where $X$ denotes the vector field generating the flow. Then setting $U_\varepsilon = f^{-1}[0,\varepsilon)$ for $\varepsilon>0$ small enough yields an open set that contains $K$ and such that  $\varphi_t(\overline{U_\varepsilon})\subset U_\varepsilon$ and $K=\bigcap\limits_{t>0}\varphi_t(U_\varepsilon)$. 
\end{rk}

\begin{rk}\rm
For the second point of the definition, we just need to check that the condition is satisfied for $V=U$ for a $U$ as in the first point. 
\end{rk}

\begin{prop}\label{Propuniglobattr}
There is always at most one global attractor. 
\end{prop}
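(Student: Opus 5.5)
Suppose $K_1$ and $K_2$ are both global attractors of $(\varphi_t)$ in the sense of Definition \ref{defglobattr}. The plan is to prove $K_1\subset K_2$; by symmetry this gives $K_1=K_2$. Fix an open set $U_2$ attached to $K_2$ as in the first point of the definition, so that $\varphi_t(\overline{U_2})\subset U_2$ for every $t>0$ and $K_2=\bigcap_{t>0}\varphi_t(U_2)$. Similarly fix $U_1$ for $K_1$.

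The first step is an invariance property: an attractor satisfies $\varphi_s(K)= K$ for every $s\geq 0$. For the inclusion $\varphi_s(K)\subset K$, note that $\varphi_s\big(\varphi_t(U)\big)=\varphi_{t+s}(U)\subset\varphi_t(U)$. This holds because $\varphi_s(U)\subset U$, and it is enough to test the intersection on times $t\geq s$. For the reverse inclusion, take $y\in K$. Then for each $t>s$ there is $z_t\in\varphi_{t-s}(U)\subset U$ with $\varphi_s(z_t)=y$. Since $\varphi_s$ is injective, $z:=\varphi_{-s}(y)$ is a well-defined point that lies in $\bigcap_{t}\varphi_{t}(U)=K$, so $y\in\varphi_s(K)$.

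The second step uses the global property of $K_2$. By the second remark following Definition \ref{defglobattr}, each $x\in K_1$ has an orbit that enters $U_2$ at some time $t$. If that time is negative, $\varphi_t(x)$ is still a point of $K_1$, by invariance of $K_1$ extended to negative times (backward orbits of points of $K_1$ stay in $K_1$, by the reverse inclusion just proved). So we may replace $x$ by $\varphi_t(x)$ and assume the entrance happens at a nonnegative time. Because $\varphi_t(\overline{U_2})\subset U_2$, the orbit then stays in $U_2$ for all later times. Since $K_1$ is compact and $U_2$ is open, a finite-cover argument (using continuity of the flow on compact sets) gives a single time $T$ with $\varphi_T(K_1)\subset U_2$. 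By invariance, $K_1=\varphi_T(K_1)\subset U_2$. Applying invariance again, $K_1=\varphi_{t}(K_1)\subset\varphi_t(U_2)$ for every $t>0$, hence $K_1\subset\bigcap_{t>0}\varphi_t(U_2)=K_2$.

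The main obstacle is the uniformity step: passing from a pointwise entrance time into $U_2$ to a single time that works for all of $K_1$, while keeping track of the domain of the flow, which is only guaranteed for $t\geq0$. The way to handle this is to rely on forward invariance of $U_2$, so that entering once means staying forever, together with compactness of $K_1$ and openness of $\{x:\varphi_T(x)\in U_2\}$. An alternative route avoids the uniformity issue altogether. Since $K_1$ is compact, invariant and contained in every forward image, each point $y\in K_1$ can be written as $y=\varphi_t(y_t)$ with $y_t\in K_1$ and $t$ arbitrarily large. So it suffices to show $K_1\subset\varphi_t(U_2)$ for large $t$, which again follows once $K_1\subset U_2$.
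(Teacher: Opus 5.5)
Your proof is correct and follows essentially the paper's argument: global attraction, forward invariance of the trapping neighborhood and compactness of $K_1$ give a single $T$ with $\varphi_T(K_1)\subset U_2$, and invariance of the attractor finishes the proof. The difference is only in packaging. You conclude directly from $K_1=\varphi_t(K_1)\subset\varphi_t(U_2)$ that $K_1\subset K_2$ and then use symmetry. The paper instead first shrinks $U_1$ so that $K_2\not\subset U_1$ and argues by contradiction. Your version therefore avoids that shrinking step, and it also proves the invariance $\varphi_s(K)=K$, which the paper uses tacitly. Your closing ``alternative route'', however, still needs $K_1\subset U_2$, so it does not actually bypass the compactness step.
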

\begin{proof}[Proof of Proposition \ref{Propuniglobattr}]
We assume that $K$ is a global attractor for the flow $(\varphi_t)$  and we choose $U$ as in Definition \ref{defglobattr}. 

From $K=\bigcap\limits_{t>0}\varphi_t(\overline{U} )$ and the fact that $t\mapsto \varphi_t(\overline U)$ is decreasing, we deduce that for every neighborhood $V$ of $K$, there exists $t>0$ such that $\varphi_t(\overline{U})\subset V$. Hence, replacing $U$ by $\varphi_t(U)$, we can assume that $U$ is as close as we want to $K$.

We now assume that $K_1\neq K_2$ are two global attractors. We can assume for example that $K_2\backslash K_1\neq \emptyset$. Then we can choose an open subset $U_1$ of $N$ such that $K_2\not\subset  U_1$,  for every $t>0$, $\varphi_t(\overline{U_1})\subset U_1$ and $K_1=\bigcap\limits_{t>0}\varphi_t(U_1)$.\\
 Because $K_1$ is a global attractor and $U_1$ is forward invariant, for every $x\in K_2$, there exists $t_x>0$ such that $\forall t\geq t_x, \  \varphi_t(x)\in U_1$. Hence  $\displaystyle{ K_2\subset \bigcup _{t>0}\varphi_{-t}(U_1)}$. As we have an increasing family of open sets, there is a $T>0$ such that $ K_2\subset \varphi_{-T}(U_1)$ and then
 $\varphi_T(K_2)\subset U_1$. This is a contradiction with the invariance of $K_2$ and the fact that $K_2\nsubset  U_1$.
\end{proof}

\begin{cor} When $N$ is compact, the unique global attractor is $N$. 

\end{cor}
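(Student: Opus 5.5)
Since $N$ is compact and invariant under the flow, the plan is to check directly that $K=N$ satisfies both conditions of Definition \ref{defglobattr}, and then to invoke Proposition \ref{Propuniglobattr} to conclude that it is the only global attractor.

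\emph{$N$ is an attractor.} Take $U=N$, which is open in $N$ and contains $K=N$. Since $N$ has no boundary and is compact, the flow is complete and each $\varphi_t$ is a diffeomorphism of $N$ onto itself. Hence $\varphi_t(\overline U)=\varphi_t(N)=N\subset U$ for every $t>0$, and $\bigcap_{t>0}\varphi_t(U)=N=K$.

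\emph{$N$ is the global attractor.} Let $V$ be any neighborhood of $K=N$; then $V=N$. For every $x\in N$, the choice $t=0$ gives $\varphi_0(x)=x\in V$. So the second condition holds trivially.

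\emph{Uniqueness.} By Proposition \ref{Propuniglobattr} there is at most one global attractor, so $N$ is the unique one. Note that uniqueness relies on surjectivity of $\varphi_t$: any attractor $K'$ is invariant, and the global condition forces the whole of $N$ to be pushed into any neighborhood $U'$ of $K'$, so $N=\varphi_T(N)\subset U'$. The only point needing care is that $\varphi_t(N)=N$, i.e.\ that the flow is complete and invertible. This is automatic for a flow of a vector field on a closed manifold, which is the setting here. There is no genuine obstacle.
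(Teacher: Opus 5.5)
Your proof is correct and follows the paper's approach. The paper writes no proof, since the corollary is immediate from Proposition~\ref{Propuniglobattr}: with $U=N$ one has $\varphi_t(\overline U)=N=U$ and $\bigcap_{t>0}\varphi_t(U)=N$ because each $\varphi_t$ is onto, the global condition holds trivially since $N$ is its own only neighborhood, and uniqueness comes from that proposition — exactly your argument. You were also right to single out surjectivity of $\varphi_t$ (a genuine flow on a closed manifold, not a mere semiflow) as the one point that needs care.
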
 
\begin{prop}\label{PropExistGlobAttr}
Assume that $H:T^*M\to \R$ is $C^{1, 1}$,  convex in the fiber direction and coercive. Then  the $(\alpha, H)$ conformal Hamiltonian vector field $X_H^\alpha$ is complete in positive times and has a global attractor.
\end{prop}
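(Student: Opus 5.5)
The plan is to use $H$ itself as a Lyapunov-type function. Along an orbit $x(t)=(q(t),p(t))$ of $X_H^\alpha$ we have, by \eqref{E1},
$$\frac{d}{dt}H(x(t))=dH(X_H^\alpha)=\omega(X_H^\alpha,X_H^\alpha)-\alpha\lambda(X_H^\alpha)=-\alpha\lambda(X_H^\alpha).$$
In canonical coordinates $X_H^\alpha=(\partial_pH,\,-\partial_qH-\alpha p)$, so $\lambda(X_H^\alpha)=p\cdot\partial_pH(q,p)$. Hence
$$\frac{d}{dt}H=-\alpha\, p\cdot\partial_pH(q,p).$$
Convexity in the fibers gives
$$p\cdot\partial_pH(q,p)\ \ge\ H(q,p)-H(q,0)\ \ge\ H(q,p)-C_0,$$
where $C_0=\max_{q\in M}H(q,0)$ is finite because $M$ is compact. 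Therefore
$$\frac{d}{dt}\big(H-C_0\big)\le -\alpha\big(H-C_0\big),$$
and Gronwall gives
$$H(\varphi_t(x))\le C_0+e^{-\alpha t}\big(H(x)-C_0\big)^+$$
for as long as the orbit exists. Since $H$ is $C^{1,1}$, the derivative is defined everywhere and the computation is legitimate.

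Completeness in positive time follows from this bound. The orbit stays in the sublevel set $\{H\le \max(H(x),C_0)\}$, which is compact by coercivity. A $C^{0,1}$ vector field whose forward orbit stays in a compact set is defined for all $t\ge 0$.

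For the attractor, fix $C_1>C_0$ and set $U=\{H<C_1\}$. This is open, and its closure lies in the compact set $\{H\le C_1\}$. I then check $\varphi_t(\overline U)\subset U$ for $t>0$. If $H(x)\le C_1$ then $H(\varphi_t(x))\le C_0+e^{-\alpha t}(C_1-C_0)<C_1$. If $H(x)<C_0$, the orbit can never reach the level $C_1$, since at any point with $H\ge C_0$ the function $H$ is nonincreasing along the flow.

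Define $K=\bigcap_{t>0}\varphi_t(U)$. This is a decreasing intersection of the sets $\varphi_t(\overline U)$, which are nonempty and compact, and the two intersections coincide because $\varphi_{t+1}(\overline U)\subset\varphi_t(U)$. So $K$ is nonempty and compact. The global property is immediate from the Gronwall bound: every $x$ satisfies $H(\varphi_t(x))<C_1$ for large $t$, i.e. it enters $U$. By the second remark after Definition \ref{defglobattr}, checking this for $U$ suffices.

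The main obstacle is the sign computation $\lambda(X_H^\alpha)=p\,\partial_pH$, including the sign conventions $\omega=-d\lambda$ and $\iota_{X}\omega=\alpha\lambda+dH$. The decay of $H$ must come out with the correct sign, and one should double-check it on the Liouville example $H=0$, where the zero section must be attracting. If the sign came out wrong, the flow would be dissipative in negative time, so this consistency check is essential. A minor point is that with $C^{1,1}$ regularity the flow is only Lipschitz, but that is enough for existence, uniqueness, and the above estimates.
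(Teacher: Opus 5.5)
Your proof is correct and is essentially the paper's argument: $H$ serves as a Lyapunov function via $\frac{d}{dt}H=-\alpha\,\partial_pH(q,p)p\le\alpha\big(H(q,0)-H(q,p)\big)$, with a sublevel set of $H$ above $\max_q H(q,0)$ as the trapping region. You additionally write out, via the Gronwall bound and the nested-compact argument for $K$, the completeness, forward-invariance and global-attraction steps that the paper only asserts.
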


A different proof of this proposition is given in \cite{MaroSorrentino2017} when $H$ is assumed to be Tonelli, that is a more restrictive condition than ours.

\begin{proof}[Proof of Proposition \ref{PropExistGlobAttr}]
 Let $H:T^*M\to\R$ be $C^{1, 1}$,  convex in the fiber direction and coercive. We prove that there exists $R>0$ such that $H$ is a strict Lyapunov function on $\{ H>R\}$. We can then conclude that $X_H^\alpha$ is complete in positive times and that $\bigcap\limits_{t>0}\varphi_t(\{H\leq R+1\})$ is the global attractor (where $(\varphi_t)$ is the flow of $X_H^\alpha$).
 
 We use canonical coordinates $(q, p)$. 
 \begin{lemma}\label{LemmLyapfunct} With the hypothesis of Proposition \ref{PropExistGlobAttr}, assume that $H(q, p)>R=\sup_{q\in M} H(q, 0)$ and denote by $\big(q(t), p(t)\big)$  the orbit of $(q, p)$. Then $\frac{d}{dt}\big(H(q(t), p(t)\big)_{|t=0}<0$.
 \end{lemma}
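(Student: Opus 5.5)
The plan is to compute $\frac{d}{dt}H$ along the flow directly from the defining relation \eqref{E1}, and then use fiberwise convexity to bound it.

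\emph{Computing the derivative.} Since $\iota_{X_H^\alpha}\omega=\alpha\lambda+dH$, we have $dH(X_H^\alpha)=\omega(X_H^\alpha,X_H^\alpha)-\alpha\lambda(X_H^\alpha)=-\alpha\lambda(X_H^\alpha)$. In canonical coordinates $\lambda=p\,dq$, so $\lambda(X_H^\alpha)=p\cdot\dot q$. The Liouville part $Z_\lambda$ is vertical: it is $p\,\partial_p$ up to the sign fixed by $\omega=-d\lambda$. Hence $\dot q=\partial_pH(q,p)$, and
$$\frac{d}{dt}H\big(q(t),p(t)\big)_{|t=0}=-\alpha\,\partial_pH(q,p)\cdot p .$$
As a sanity check, in coordinates the equations read $\dot q=\partial_pH$ and $\dot p=-\partial_qH-\alpha p$. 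Then $\frac{d}{dt}H=\partial_qH\cdot\dot q+\partial_pH\cdot\dot p=-\alpha\,\partial_pH\cdot p$, consistent with the above. I would fix the sign convention carefully at this step, checking it against $\varphi_t^*\omega=e^{-\alpha t}\omega$, since the sign of the conclusion depends on it.

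\emph{Using convexity.} The restriction $p\mapsto H(q,p)$ is convex and $C^1$ on the vector space $T^*_qM$. The tangent-line inequality at $p$, evaluated at the point $0$, gives
$$H(q,0)\ge H(q,p)+\partial_pH(q,p)\cdot(0-p),$$
that is, $\partial_pH(q,p)\cdot p\ge H(q,p)-H(q,0)$.

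\emph{Concluding.} By coercivity and compactness of $M$, $R=\sup_q H(q,0)$ is finite. The hypothesis $H(q,p)>R\ge H(q,0)$ then gives $\partial_pH(q,p)\cdot p>0$. Therefore $\frac{d}{dt}H=-\alpha\,\partial_pH\cdot p<0$.

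The only delicate point is the sign bookkeeping in the first step. The convexity step itself is routine.
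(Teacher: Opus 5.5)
Your proof is correct and follows the paper's argument: the paper writes $\frac{d}{dt}H=-\alpha\,\partial_pH(q,p)p\le\alpha\big(H(q,0)-H(q,p)\big)<0$, which is the same derivative formula followed by the convexity (tangent-line) inequality at $p=0$. One minor remark: finiteness of $R$ only needs continuity of $H$ and compactness of $M$, not coercivity.
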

 \begin{proof}[Proof of Lemma  \ref{LemmLyapfunct}]
 Because of  convexity, we have 
  $$\frac{d}{dt}\big(H(q(t), p(t)\big)_{|t=0}=-\alpha \partial_pH(q, p)p\leq\alpha\big(H(q,0)-H(q,p)\big)< 0. \qedhere$$
 \end{proof}
 \end{proof}
We deduce from the last proof the following statement.
\begin{cor}\label{Corcritsimpleint}
Assume that $H:T^*M\to \R$ is $C^{1, 1}$,   convex in the fiber direction, coercive and that the minimum of $H$ is attained exactly at all the points of the zero section.   Then  the $(\alpha, H)$ conformal Hamiltonian vector field $X_H^\alpha$ is Hopf and $C^1$ integrable and the zero section is the global attractor. \end{cor}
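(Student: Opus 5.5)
The plan is to show that the zero section $Z=\{(q,0)\}$ is itself the global attractor. It is then a Lipschitz (indeed smooth) Lagrangian graph, which gives Hopf integrability, and it is trivially symplectically isotopic to itself, which gives $C^1$-integrability.

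Set $m=\min H$, so that by hypothesis $H(q,0)=m$ for every $q$, and $H(q,p)>m$ whenever $p\neq 0$. First I check that $Z$ is invariant. At a point of $Z$ we have $\lambda=0$, and $dH=0$ because each point of $Z$ is a minimum of $H$. So $\iota_{X_H^\alpha}\omega=0$ there, hence $X_H^\alpha$ vanishes on $Z$ and $Z$ consists of fixed points.

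Next I reuse the computation in Lemma \ref{LemmLyapfunct}. Convexity in the fiber gives
\[
\frac{d}{dt}H(q(t),p(t))=-\alpha\,\partial_pH(q,p)p\le \alpha\big(H(q,0)-H(q,p)\big)=\alpha\big(m-H(q,p)\big).
\]
This is strictly negative off $Z$. Moreover $h(t)=H(\varphi_t(x))-m\ge 0$ satisfies $h'\le -\alpha h$, so $h(t)\le e^{-\alpha t}h(0)\to 0$. Completeness in positive time is already given by Proposition \ref{PropExistGlobAttr}.

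To turn this decay into the attractor property, take $U=\{H<m+\varepsilon\}$ with $\varepsilon>0$. This set is open and, by coercivity, relatively compact. The decay estimate gives $\varphi_t(\overline U)\subset\{H\le m+e^{-\alpha t}\varepsilon\}\subset U$ for $t>0$. I claim
\[
\bigcap_{t>0}\varphi_t(U)=Z.
\]
The inclusion $\supset$ holds because points of $Z$ are fixed. For $\subset$, a point $y$ of the intersection is $\varphi_t(x_t)$ with $x_t\in U$, so $H(y)-m\le e^{-\alpha t}\varepsilon$ for every $t$. Hence $H(y)=m$, which forces $y\in Z$, since the minimum is attained only on $Z$.

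It remains to prove that every orbit enters every neighborhood $V$ of $Z$. The only point needing care is that $H(\varphi_t(x))\to m$ implies $\varphi_t(x)\to Z$. This is a compactness argument. The sublevel set $\{H\le H(x)\}$ is compact by coercivity and contains the forward orbit. On the compact set $\{H\le H(x)\}\setminus V$, the continuous function $H-m$ is positive, so it has a positive minimum $\eta$. Once $e^{-\alpha t}(H(x)-m)<\eta$, the point $\varphi_t(x)$ must lie in $V$.

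This is the main (mild) obstacle. Everything else is the Lyapunov computation already done in the proof of Proposition \ref{PropExistGlobAttr}, together with the uniqueness in Proposition \ref{Propuniglobattr}.
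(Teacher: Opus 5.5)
Your proof is correct and follows the same route as the paper. In both, the zero section consists of zeros of $X_H^\alpha$, and the convexity computation of Lemma \ref{LemmLyapfunct} makes $H$ a strict Lyapunov function off the zero section. You make explicit what the paper leaves implicit: the exponential decay $H(\varphi_t(x))-\min H\le e^{-\alpha t}\big(H(x)-\min H\big)$, and the resulting check, via coercivity, of each clause of Definition \ref{defglobattr}.
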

\begin{proof}
In  Lemma  \ref{LemmLyapfunct}, we proved that $H$ is a strict Lyapunov  function outside the zero section for $X_H^\alpha$ for every $\alpha>0$.  As  $X_H^\alpha$ vanishes on the zero section, the latter is a smooth invariant Lagrangian manifold and the global attractor and $X_H^\alpha$ is both Hopf and $C^1$ integrable.
\end{proof}

It can happen that there  is no global attractor.
\begin{example}\label{ExNoglobattr} Assume $\alpha\in (0, 2\pi)$ and define on $T^*\T=\T\times \R$, $H(q, p)=p\sin(2\pi q)$. Then we have
$$
\frac{dq}{dt}= \sin (2\pi q)\quad{\rm and}\quad\frac{dp}{dt}=-(2\pi\cos( 2\pi q)+\alpha)\ p.
 $$
 Then if $q(0)=\frac{1}{2}$, we have $ q(t)=\frac{1}{2}$ for all $t\in \R$ and $\frac{dp}{dt}=(2\pi-\alpha)\ p$ where $2\pi-\alpha>0$. Hence $t\in\R\mapsto e^{(2\pi-\alpha)t}$ is an orbit that goes outside every compact set when $t\to+\infty$.  
 Hence there is no global attractor.
 
% {\color{blue} IF WE SPEAK OF BIRKHOFF ATTRACTOR BEFORE, WE HAVE TO REMARK   HERE THAT IN THIS EXAMPLE THE  BIRKHOFF ATTRACTOR IS THE ZERO SECTION.}

\end{example}
We remark that  the Birkhoff attractor in this example is the zero section.

\begin{figure}
\begin{center}
\tikzset{every picture/.style={line width=0.75pt}} %set default line width to 0.75pt        
%\centering
\begin{tikzpicture}[x=0.75pt,y=0.75pt,yscale=-.8,xscale=1]
%uncomment if require: \path (0,326); %set diagram left start at 0, and has height of 326

%Straight Lines [id:da951253202738662] 
\draw    (148.5,161.5) -- (317.5,161.5) ;
\draw [shift={(237.2,161.5)}, rotate = 180] [color={rgb, 255:red, 0; green, 0; blue, 0 }  ][line width=0.75]    (7.65,-2.3) .. controls (4.86,-0.97) and (2.31,-0.21) .. (0,0) .. controls (2.31,0.21) and (4.86,0.98) .. (7.65,2.3)   ;
%Straight Lines [id:da0818975873209038] 
\draw    (317.5,161.5) -- (486.5,161.5) ;
\draw [shift={(396.8,161.5)}, rotate = 0] [color={rgb, 255:red, 0; green, 0; blue, 0 }  ][line width=0.75]    (7.65,-2.3) .. controls (4.86,-0.97) and (2.31,-0.21) .. (0,0) .. controls (2.31,0.21) and (4.86,0.98) .. (7.65,2.3)   ;
%Straight Lines [id:da4390075219110081] 
\draw    (148.5,32.5) -- (148.5,161.5) ;
\draw [shift={(148.5,101.2)}, rotate = 270] [color={rgb, 255:red, 0; green, 0; blue, 0 }  ][line width=0.75]    (7.65,-2.3) .. controls (4.86,-0.97) and (2.31,-0.21) .. (0,0) .. controls (2.31,0.21) and (4.86,0.98) .. (7.65,2.3)   ;
%Straight Lines [id:da4537237714768425] 
\draw    (486.5,30.5) -- (486.5,161.5) ;
\draw [shift={(486.5,100.2)}, rotate = 270] [color={rgb, 255:red, 0; green, 0; blue, 0 }  ][line width=0.75]    (7.65,-2.3) .. controls (4.86,-0.97) and (2.31,-0.21) .. (0,0) .. controls (2.31,0.21) and (4.86,0.98) .. (7.65,2.3)   ;
%Straight Lines [id:da571817319113501] 
\draw    (317.5,161.5) -- (317.5,292.5) ;
\draw [shift={(317.5,231.2)}, rotate = 270] [color={rgb, 255:red, 0; green, 0; blue, 0 }  ][line width=0.75]    (7.65,-2.3) .. controls (4.86,-0.97) and (2.31,-0.21) .. (0,0) .. controls (2.31,0.21) and (4.86,0.98) .. (7.65,2.3)   ;
%Straight Lines [id:da18731692190069738] 
\draw    (486.5,161.5) -- (486.5,288.5) ;
\draw [shift={(486.5,219.8)}, rotate = 90] [color={rgb, 255:red, 0; green, 0; blue, 0 }  ][line width=0.75]    (7.65,-2.3) .. controls (4.86,-0.97) and (2.31,-0.21) .. (0,0) .. controls (2.31,0.21) and (4.86,0.98) .. (7.65,2.3)   ;
%Straight Lines [id:da5152713773351558] 
\draw    (148.5,161.5) -- (148.5,293.5) ;
\draw [shift={(148.5,222.3)}, rotate = 90] [color={rgb, 255:red, 0; green, 0; blue, 0 }  ][line width=0.75]    (7.65,-2.3) .. controls (4.86,-0.97) and (2.31,-0.21) .. (0,0) .. controls (2.31,0.21) and (4.86,0.98) .. (7.65,2.3)   ;
%Straight Lines [id:da7796391833454395] 
\draw    (317.5,32.5) -- (317.5,161.5) ;
\draw [shift={(317.5,91.8)}, rotate = 90] [color={rgb, 255:red, 0; green, 0; blue, 0 }  ][line width=0.75]    (7.65,-2.3) .. controls (4.86,-0.97) and (2.31,-0.21) .. (0,0) .. controls (2.31,0.21) and (4.86,0.98) .. (7.65,2.3)   ;
%Curve Lines [id:da4638697904285798] 
\draw    (162.5,67.5) .. controls (164.5,118.5) and (265.5,196.5) .. (308.5,69.5) ;
\draw [shift={(239.11,138.35)}, rotate = 182.84] [color={rgb, 255:red, 0; green, 0; blue, 0 }  ][line width=0.75]    (7.65,-2.3) .. controls (4.86,-0.97) and (2.31,-0.21) .. (0,0) .. controls (2.31,0.21) and (4.86,0.98) .. (7.65,2.3)   ;
%Curve Lines [id:da6745117303125803] 
\draw    (181.5,56.5) .. controls (184.5,109.5) and (264.5,152.5) .. (290.5,58.5) ;
\draw [shift={(239.59,113.82)}, rotate = 181.36] [color={rgb, 255:red, 0; green, 0; blue, 0 }  ][line width=0.75]    (7.65,-2.3) .. controls (4.86,-0.97) and (2.31,-0.21) .. (0,0) .. controls (2.31,0.21) and (4.86,0.98) .. (7.65,2.3)   ;
%Curve Lines [id:da12743052511276565] 
\draw    (344.5,58.5) .. controls (350.5,113.5) and (432.5,156.5) .. (458.5,62.5) ;
\draw [shift={(395.47,116.71)}, rotate = 8.23] [color={rgb, 255:red, 0; green, 0; blue, 0 }  ][line width=0.75]    (7.65,-2.3) .. controls (4.86,-0.97) and (2.31,-0.21) .. (0,0) .. controls (2.31,0.21) and (4.86,0.98) .. (7.65,2.3)   ;
%Curve Lines [id:da1038290227024985] 
\draw    (330.5,80.5) .. controls (332.38,128.54) and (440.5,197.5) .. (473.5,82.5) ;
\draw [shift={(397.93,144.37)}, rotate = 7.25] [color={rgb, 255:red, 0; green, 0; blue, 0 }  ][line width=0.75]    (7.65,-2.3) .. controls (4.86,-0.97) and (2.31,-0.21) .. (0,0) .. controls (2.31,0.21) and (4.86,0.98) .. (7.65,2.3)   ;
%Curve Lines [id:da11225263206120761] 
\draw    (167.5,242.5) .. controls (214.5,95.5) and (308.5,226.5) .. (306.5,252.5) ;
\draw [shift={(242.83,176.63)}, rotate = 195.59] [color={rgb, 255:red, 0; green, 0; blue, 0 }  ][line width=0.75]    (7.65,-2.3) .. controls (4.86,-0.97) and (2.31,-0.21) .. (0,0) .. controls (2.31,0.21) and (4.86,0.98) .. (7.65,2.3)   ;
%Curve Lines [id:da9589866678748056] 
\draw    (174.5,263.5) .. controls (221.5,116.5) and (298.5,253.5) .. (296.5,279.5) ;
\draw [shift={(243.89,200.37)}, rotate = 201.83] [color={rgb, 255:red, 0; green, 0; blue, 0 }  ][line width=0.75]    (7.65,-2.3) .. controls (4.86,-0.97) and (2.31,-0.21) .. (0,0) .. controls (2.31,0.21) and (4.86,0.98) .. (7.65,2.3)   ;
%Curve Lines [id:da5467782163231103] 
\draw    (341.5,280.5) .. controls (388.5,133.5) and (465.5,270.5) .. (463.5,296.5) ;
\draw [shift={(402.09,214.27)}, rotate = 16.94] [color={rgb, 255:red, 0; green, 0; blue, 0 }  ][line width=0.75]    (7.65,-2.3) .. controls (4.86,-0.97) and (2.31,-0.21) .. (0,0) .. controls (2.31,0.21) and (4.86,0.98) .. (7.65,2.3)   ;
%Curve Lines [id:da18487355385924809] 
\draw    (335.5,248.5) .. controls (382.5,101.5) and (476.5,232.5) .. (474.5,258.5) ;
\draw [shift={(401.59,180.33)}, rotate = 11.73] [color={rgb, 255:red, 0; green, 0; blue, 0 }  ][line width=0.75]    (7.65,-2.3) .. controls (4.86,-0.97) and (2.31,-0.21) .. (0,0) .. controls (2.31,0.21) and (4.86,0.98) .. (7.65,2.3)   ;

% Text Node
\draw (123,167.4) node [anchor=north west][inner sep=0.75pt]  [font=\tiny]  {$( 0,0)$};
% Text Node
\draw (288,167.4) node [anchor=north west][inner sep=0.75pt]  [font=\tiny]  {$\left(\frac{1}{2} ,0\right)$};
% Text Node
\draw (490,172.4) node [anchor=north west][inner sep=0.75pt]  [font=\tiny]  {$( 1,0)$};

\end{tikzpicture}
\caption{A conformal Hamiltonian flow with no global attractor}
\end{center}
\end{figure}

%\section{Hopf type theory}\label{SecHopf}
\section{Example of Tonelli Hamiltonian whose symplectic Hamiltonian vector field $X_H$ is not integrable but such that for all small enough $\alpha>0$, $X_H^\alpha$ has conjugate points and is    Hopf  and $C^1$ integrable }\label{ssecExHopfintConjPts}
%{\color{blue} I CHANGED THE EXAMPLE BECAUSE THERE WAS A PROBLEM: THE ZERO SECTION WAS NOT INVARIANT BY THE PREVIOUS EXAMPLE}

We work on $\T^d$. We assume that $\eta:\R^d\to [0, 1]$ is a smooth function that is $1$ on $B(0, r)$ and $0$ on $\R^d\backslash B(0, R)$. Choosing $R-r$ large, we can assume that $D\eta$ and $D^2\eta$ are small.\\
 Let $p_0=a_0/b_0$ be a point of  $\Q^d$ that doesn't belong to $B(0, R)$ where $a_0\in\Z^d$ and $b_0\in \N^*$. We build a small smooth non-positive potential $V:\T^d\to \R_+$ that vanishes only on the curve  $E=\{ tp_0 ; t\in [0, b_0]\}$ of $\T^d$ and whose Hessian at every point of  $E$ has rank $d-1$. This implies that this Hessian $\text{Hess} \, V(tp_0)$ is non-positive with a $1$-dimensional kernel. Identifying $T_{q}\T^d$ with $\R^d$, we will also assume that this Hessian is a constant symmetric matrix $\text{Hess}\, V(tp_0)=S\leq 0$ (that doesn't depend on $t$).
 
 We introduce the Hamiltonian $H(q, p)=\frac{1}{2}\| p\|^2+ \eta(p-p_0) V(q)$ of $T^*\T^d=\T^d\times \R^d$. As $D^2\eta$ is small and $\eta$ has compact support, $H$ is Tonelli. Hamilton's equations are

$$\begin{cases}
  \dot q=p+V(q)\nabla \eta(p-p_0);\\
  \dot p=-\eta(p-p_0)\nabla V(q).
\end{cases}$$
On $\widetilde{E}=\{ (tp_0, p_0); t\in [0, b_0]\}$, we have $V(tp_0)=0$, $\nabla V(tp_0)=0$,  $\eta(0)=1$ and $\nabla\eta (0)=0$. We deduce on $\widetilde E$ that 
$$\begin{cases}
  \dot q=p_0;\\
  \dot p=0
\end{cases}$$
and that $\widetilde E$ is a periodic orbit for the Hamiltonian flow of $H$. The Hessian of $H$ on $\widetilde E$ is 
$$\text{Hess}\,H(tp_0, p_0)= \begin{pmatrix}\eta(0) \text{Hess}\, V(tp_0)&\nabla V(tp_0)^{t}\nabla \eta(0)\\
\nabla\eta(0)^{t}\nabla V(tp_0)&{\mathbf 1}-V(tp_0)\text{Hess}\, \eta (0)\end{pmatrix}=\begin{pmatrix}S&{\mathbf 0}\\
{\mathbf 0}& {\mathbf 1}\end{pmatrix}.$$
The eigenvalues of 
$$J\text{Hess}\, H(tp_0, p_0)=\begin{pmatrix}{\mathbf 0}&{\mathbf 1}\\
-{\mathbf 1}& {\mathbf 0}\end{pmatrix}\begin{pmatrix}S&{\mathbf 0}\\
{\mathbf 0}& {\mathbf 1}\end{pmatrix}=\begin{pmatrix}{\mathbf 0}&{\mathbf 1}\\
-S& {\mathbf 0}\end{pmatrix}$$
are given by

\[\begin{split}J\text{Hess}\,H(tp_0, p_0)(u,v)=\lambda(u,v)&\Longleftrightarrow v=\lambda u \text{ and } Su=-\lambda v\\\
&\Longleftrightarrow v=\lambda u \text{ and } Su=-\lambda^2 u\\
&\Longleftrightarrow (S+\lambda^2)u=0\text{ and } v=\lambda u.
\end{split}\]
As $S$ is non-positive with rank $d-1$, $J\text{Hess}H(tp_0, p_0)$ has $(d-1)$ negative eigenvalues, $(d-1)$ positive eigenvalues and two zero eigenvalues. Hence  the periodic orbit $\widetilde{E}$ is hyperbolic, which implies that the Hamiltonian $H$ is not integrable and we deduce that he has conjugate points, see \cite{AABZ2015}.  Indeed, in \cite{AABZ2015}, it is proven that the periodic orbits of such Hamiltonians are contained in invariant Lagrangian graphs that are filled by periodic orbits with the same period. Hence, these periodic orbits have all their Floquet multipliers equal to 1 and cannot be hyperbolic. 

Moreover, as the Hamiltonian is Tonelli, the conjugate points remain by perturbation, because the orbit of the tangent to the vertical fiber at a certain  point is transverse to the singular Maslov cycle, see e.g. \cite{ArnaudFlorioRoos2022}. 

 Now, we consider the conformally Hamiltonian flow for a small $\alpha>0$.  It has then conjugate points.
 
% {\color{green} THIS IS IN FACT USELESS IF WE APPLY COROLLARY \ref{Corcritsimpleint}.  And we have
%$$\frac{d}{dt}H(q(t), p(t))=-\alpha \partial_pH(q, p)p=-\alpha(\| p\|^2 -V(q) d\eta(p-p_0)p).$$
%This vanishes only for $p=0$. Indeed,  $V(q)d\eta(p-p_0)p$ is non-zero only if $p\in B(p_0, R)\backslash B(p_0, r)$ but on this compact set, we can assume that $V$ is  small enough to  have  $|V(q) d\eta (p-p_0)p|<\| p\|^2$. Hence $H$ is a Lyapunov function that is strict outside the zero section. }

 As  $H(q,p)=H(q, p)=\frac{1}{2}\| p\|^2$ in a neighborhood of the zero section,  the minimum of $H$ on every fiber is attained on the zero section and by Corollary \ref{Corcritsimpleint},
 % the zero section is invariant and  
 the global attractor is the zero section.
\section{Proof of Theorem \ref{ThHopf}}\label{SProoThHopf}

The proof is performed lifting everything from the torus $\T^d$ to its universal cover $\R^d$ and  will be in three steps:
\begin{itemize}
\item In the first step we prove that all trajectories of the conformally symplectic flow satisfy a minimization property (Proposition \ref{PTonelli}). This makes use directly of the hypothesis of no conjugate points. From this we deduce that for all $q\in \R^d$,  $\varphi_t(\{q\}\times \R^d)$ is a smooth graph above $\R^d$ for all $t>0$, where $\varphi_t$ is the conformal flow (Corollary \ref{Corimagvert}).
\item Next we take $(q_0,p_0)$ in the global attractor, set $\varphi_s(q_0,p_0) = \big(q(s),p(s)\big)$ for $s\in \R$ and apply the previous result to $q = q(-t)$ for $t>0$. We prove that as $t\to +\infty$, $\varphi_t(\{q(-t)\}\times \R^d)$ converges to a $\Z^d$-periodic Lipschitz graph that is the differential of a $\Z^d$-periodic function $u_\alpha : \R^d\to \R^d$ solution to a Hamilton-Jacobi equation
(Proposition \ref{PFinale}).
\item Finally we prove that this graph of $D u_\alpha$ is the global attractor (Corollary \ref{CorHopf}).
\end{itemize}

 Let us prove Theorem \ref{ThHopf}.

We want to prove that no conjugate points implies Hopf integrability. We assume $L>0$ (we can do that by adding a constant) and 
assume that  there are no conjugate points on $T^*\mathbb{T}^d$. For $q_1, q_2\in \mathbb{R}^d$ and   $t>0$, we define
\begin{equation}\label{actionRn}
\widetilde{\mathcal{A}}_t(q_1, q_2) := \min_{\substack{\gamma: [-t, 0]\rightarrow \mathbb{R}^d \\ \gamma(-t) = q_1 ,\gamma(0) = q_2} } \int_{-t}^0 e^{\alpha s} L\big(\gamma(s), \dot{\gamma}(s)\big) \ ds
\end{equation}
$\widetilde{\mathcal{A}}_t$ is $\Delta_{\Z^d}$-periodic, i.e. \[
\widetilde{\mathcal{A}}_t(q_1+ k, q_2 +k) = \widetilde{\mathcal{A}}_t(q_1, q_2)  \text{  for all  }k\in \mathbb{Z}^d.
\]
 We cut the proof in different parts. 
 
 We recall that an extremal is a solution of the Euler-Lagrange Equation associated to $\mathfrak L(t,q, v) =e^{\alpha t}L(q,v)$.
 
\begin{prop}\label{PTonelli}
$\gamma\mapsto\int_{-t}^0 e^{\alpha s}L\big(\gamma(s), \dot\gamma(s)\big)ds$ has a unique extremal among arcs joining $q_1$ to $q_2$, it is  where the minimum is attained.
\end{prop}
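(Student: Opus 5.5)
Fix $t>0$ and $q_1,q_2\in\R^d$. The plan is to show that the map $E_{q_1}:\R^d\to\R^d$, $p\mapsto \pi\circ\widetilde\varphi_t(q_1,p)$, is a diffeomorphism, where $\widetilde\varphi$ is the lifted conformal flow. Extremals of $\mathfrak L(s,q,v)=e^{\alpha s}L(q,v)$ on $[-t,0]$ starting at $q_1$ are exactly projections of orbits of $X^\alpha_H$ starting in the fiber $\{q_1\}\times\R^d$. To see this, note that the Euler--Lagrange equation $\frac{d}{ds}(e^{\alpha s}\partial_vL)=e^{\alpha s}\partial_qL$ reads $\dot p=-\alpha p-\partial_qH$ with $p=\partial_vL$, and this is the conformal Hamiltonian system (\ref{E1}). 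Hence extremals from $q_1$ to $q_2$ correspond bijectively to $E_{q_1}^{-1}(q_2)$ (using autonomy to shift the time interval $[-t,0]$ to $[0,t]$). Existence of a minimizer is classical Tonelli theory: $L$ is superlinear and $e^{\alpha s}\in[e^{-\alpha t},1]$ on $[-t,0]$, so the functional is coercive and lower semicontinuous on absolutely continuous curves. Lifting to $\R^d$ costs nothing, since $L$ is $\Z^d$-periodic and so uniformly Tonelli on $\R^d$. Regularity of minimizers (Tonelli/Clarke--Vinter) shows that the minimizer is a $C^2$ extremal. So at least one extremal exists, and the task reduces to proving that $E_{q_1}^{-1}(q_2)$ has exactly one point.

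Next I would show that $E_{q_1}$ is a local diffeomorphism. Its differential at $p$ is $D\pi\circ D\widetilde\varphi_t(q_1,p)$ restricted to $V(q_1,p)$. Its kernel is nonzero exactly when $D\widetilde\varphi_t V(q_1,p)\cap V(\widetilde\varphi_t(q_1,p))\neq\{0\}$, which is excluded by the absence of conjugate points (Definition \ref{Defconjvect}, transported to the cover). Then I would show that $E_{q_1}$ is proper. If $\|p\|\to\infty$ with $q_1$ fixed, the orbit stays at high energy for a short while. Lemma \ref{LemmLyapfunct} only bounds $H$ from above, but one can use superlinearity directly. The flow satisfies $\dot q=\partial_pH(q,p)$, and over $[0,t]$ the momentum changes only by a bounded factor along the flow, because the $-\alpha p$ term contracts at rate at most $e^{-\alpha t}$ and $\partial_qH$ is controlled by energy. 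Together these give $\|q(t)-q_1\|\to\infty$. Equivalently, one can argue through action: a curve from $q_1$ to a bounded $q_2$ with huge initial speed has action larger than any bound, while the extremals hitting a bounded set would have bounded action if they were minimizing. I expect properness to be the main technical obstacle, and I would try the energy/speed comparison first, using that $H$ is $\Z^d$-periodic in $q$ so all constants are uniform.

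A proper local diffeomorphism $\R^d\to\R^d$ is a covering map, and since $\R^d$ is simply connected it is a global diffeomorphism. Hence there is exactly one $p$ with $E_{q_1}(p)=q_2$, that is, a unique extremal joining $q_1$ to $q_2$ in time $t$. Since the minimizer is an extremal, this unique extremal is the minimizer. If properness turns out to be delicate, an alternative is to run a continuation in $t$. For $t$ small, a unique extremal comes from the local convexity of $L$. As $t$ increases, the no-conjugate-points hypothesis prevents extremals from bifurcating, because a new one would have to be born at a degenerate, hence conjugate, point. This still requires the same a priori bounds to rule out extremals arriving from infinity.
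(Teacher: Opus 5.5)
\textbf{There is a genuine gap: properness of $E_{q_1}$ is never established.}

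Most of your reduction is sound. Extremals of $\mathfrak L$ on $[-t,0]$ correspond to forward orbits of length $t$ issued from the fiber over $q_1$, and the forward flow is complete by Proposition~\ref{PropExistGlobAttr}. A minimizer exists. $E_{q_1}$ is a local diffeomorphism because there are no conjugate points. A proper local diffeomorphism of $\R^d$ is a diffeomorphism.

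But once $E_{q_1}$ is known to be a local diffeomorphism, properness is \emph{equivalent} to the proposition. Your argument for it does not work, because large momentum along $[0,t]$ gives large speed, not large displacement: a fast extremal can come back.

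Here is a concrete example. Let $H(q,p)=\frac12 g_q^{-1}(p,p)$ for a $\Z^d$-periodic metric on $\R^d$ with a closed geodesic of length $\ell$ through $q_1$. This is the lift of a contractible closed geodesic, e.g.\ around the waist of a bump on a flat torus. Set $P=e^{\alpha s}p$ and $\tau=(1-e^{-\alpha s})/\alpha$; this turns the conformal flow into the geodesic flow. Hence $E_{q_1}(p)=\exp_{q_1}\big(\tfrac{1-e^{-\alpha t}}{\alpha}\,g_{q_1}^{-1}p\big)$. So there are $p_k$ with $\|p_k\|\to\infty$ and $E_{q_1}(p_k)=q_1$. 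Yet energy and speed stay huge along the whole orbit, and momentum changes only by a bounded factor. Your properness sketch uses the absence of conjugate points only locally, so it cannot be correct.

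The two fallback routes also fail:
\begin{itemize}
\item The action variant is circular. It assumes that extremals reaching a bounded set are minimizing, which is the conclusion.
\item The continuation in $t$ fails at its starting point. In the example above, the maps $E_{q_1}$ for different $t$ differ only by a rescaling of $p$, so uniqueness for small $t$ is exactly as hard as for all $t$. Extremals ``arriving from infinity'' are precisely what must be ruled out, at every $t$.
\end{itemize}

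The paper never proves properness of the endpoint map. It fixes both endpoints and works on the $H^1$ path space, arguing by contradiction:
\begin{itemize}
\item Suppose $\gamma_0\neq\gamma_1$ are two extremals from $q_1$ to $q_2$. Let $C$ be the maximal action along $(1-s)\gamma_0+s\gamma_1$.
\item Lemma~\ref{aprioricompactness} gives $R$ such that every extremal with action $<C+1$ is $R$-Lipschitz. This stays true for any Tonelli Lagrangian equal to $\mathfrak L$ on $[-R,R]\times\T^d\times B(0,R)$.
\item So one may replace $\mathfrak L$ by a Lagrangian that is quadratic at infinity. For it, the action is $C^2$, coercive and satisfies Palais--Smale.
\item Absence of conjugate points makes $\gamma_0,\gamma_1$ strict local minima.
\item The mountain-pass lemma yields a critical point $\gamma_2$ at level $\le C$ that is not a strict local minimum.
\item The a priori lemma shows $\gamma_2$ is an extremal of the original $\mathfrak L$. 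Hence it has no conjugate points, hence it is a strict local minimum, a contradiction.
\end{itemize}

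The action bound supplied by the mountain-pass level is exactly the compactness your argument lacks. To keep your covering-map framework you would need an equivalent global input. That could be this mountain-pass argument, or a Morse-theoretic count on the contractible path space, which needs the same Palais--Smale setup.
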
 
\begin{proof}[Idea of proof of Proposition \ref{PTonelli}] 
As the proof is very similar to the proof of Proposition 1 in \cite{AABZ2015} we will not give all the details. In what follows, we will write similarly functions on $\R \times \T^d \times \R^d$ or their lift to $\R \times \R^d \times \R^d$, the context will make it clear what is considered.
 
Notice that the minimum in the definition of  \eqref{actionRn} exists by classical Tonelli theory, applied to the (time dependent) Lagrangian function  
\begin{equation}\label{ELtimedepending}\mathfrak L(t,q, v) =e^{\alpha t}L(q,v).\end{equation} 
Curves realizing the minimum are Lipschitz continuous by Clarke-Vinter theory (see \cite{ClarkeV} Remark 5, page 1716) and then of class $C^2$ (\cite{Clarke} Theorem 5.7). They are hence extremal curves\footnote{that is a piece of trajectory of the Euler-Lagrange flow of $\mathcal L$.}.

Let us also recall that if $\gamma : [-t,0] \to \R^d$ is an extremal curve, then the following relations hold:
$$\forall s\in[-t,0], \quad \varphi_s\Big(\gamma(0),\partial_vL \big(\gamma(0),\dot\gamma(0)\big)\Big) = \Big(\gamma(s),\partial_vL\big(\gamma(s),\dot\gamma(s)\big)\Big).
$$
We need a standard a priori Lipschitz estimate on minimizers, however the proof needs to be adapted as the flow is not necessarily complete.

\begin{lemma}\label{aprioricompactness}
Given $t>0 $ and $C>0$,  there exists $R>0$ such that any extremal curve $\gamma : [-t, 0] \to \R^d$ with action $ \int_{-t}^0 e^{\alpha s} L\big(\gamma(s), \dot{\gamma}(s)\big) \ ds < C+1$ is automatically $R$-Lipschitz. Moreover, by taking $R$ sufficiently large, we may enforce that the same holds true for any Tonelli Lagrangian $\widetilde{ \mathfrak L}$ that coincides with $\mathfrak L$ on $[-R,R] \times \T^d \times B(0,R)$.
\end{lemma}

\begin{proof}
By the mean value theorem there exists $s_0\in [-t,0]$ such that $L\big(\gamma(s_0), \dot{\gamma}(s_0)\big)\leqslant \frac{e^{ \alpha t}(C+1)}{t}$. Hence by superlinearity of $L$, there exists $R_0>0$ only depending on $L$, on $C$ and on $t$ such that $\| \dot{\gamma}(s_0) \|< R_0$. 

Let us set $p(s) = \partial_v L\big(\gamma(s),\dot \gamma(s)\big)$ for $s\in [-t,0]$. The computation in Proposition  \ref{PropExistGlobAttr} shows that $\frac{d}{ds}H\big(\gamma(s),p(s)\big) = -\alpha\partial_p H\big(\gamma(s),p(s)\big)p(s) \leqslant \beta -\alpha H\big(\gamma(s),p(s)\big)$ where $\beta = \alpha\max_q H(q,0)$. It follows from Gronwall's lemma that for $s>s_0$, 
$H\big(\gamma(s),p(s)\big)\leqslant e^{-\alpha(s-s_0)}\Big(\frac{\beta}{\alpha}(e^{\alpha(s-s_0)}-1) + H\big(\gamma(s_0),p(s_0)\big)\Big)$.
% {\color{blue}[ I obtain that $H(\gamma(s),p(s))\leqslant e^{-\alpha(s-s_0)}(\frac{\beta}{\alpha}(e^{\alpha(s-s_0)}-1) + H(\gamma(s_0),p(s_0))$. As the right term satifies the equality, I HAVE A DOUBT ON YOUR INEQUALITY]} 
 Now let $R_1>0$ such that $H\big(q, \partial_vL(q,v)\big) <R_1$ for all $(q,v)\in \R^d\times B(0,R_0)$. Then we obtain that $H\big(\gamma(s),p(s)\big)\leqslant \frac{\beta}{ \alpha}+R_1$ for $s>s_0$.
% {\color{blue} INSTEAD $H(\gamma(s),p(s))\leqslant \frac{\beta}{\alpha} +R_1$?}

For $s<s_0$ we set $f(s) =  H\big(\gamma(s),p(s)\big)$  and we discover that 
\begin{multline*}f'(s) = -\alpha\partial_p H\big(\gamma(s),p(s)\big)p(s) \\
= -\alpha \Big(H\big(\gamma(s),p(s)\big)+L\big(\gamma(s),\dot \gamma(s)\big)\Big) =\\
 -\alpha f(s) - \alpha L\big(\gamma(s),\dot \gamma(s)\big).
\end{multline*}
It follows that $f(s)=e^{-\alpha(s-s_0)}f(s_0) -\alpha \int_{s_0}^s e^{\alpha(\sigma-s)}L\big(\gamma(\sigma),\dot \gamma(\sigma)\big)d\sigma$ from which we get
% {\color{blue} I FIND $f(s)=e^{-\alpha(s-s_0)}f(s_0) -\alpha \int_{s_0}^s e^{\alpha(\sigma-s)}L(\gamma(\sigma),\dot \gamma(\sigma))d\sigma$. }

\begin{multline*}
H\big(\gamma(s),p(s)\big) =  e^{-\alpha(s-s_0)}H\big(\gamma(s_0),p(s_0)\big)+\alpha e^{-\alpha s} \int_{s}^{s_0} e^{\alpha\sigma}L\big(\gamma(\sigma),\dot \gamma(\sigma)\big)d\sigma\\
\leqslant  e^{-\alpha(s-s_0)} H\big(\gamma(s_0),p(s_0)\big)+\alpha (C+1)<e^{\alpha t} \big(R_1+\alpha (C+1)\big).
\end{multline*}
We used that as $L\geqslant 0$, then  $\int_{s}^{s_0} e^{\alpha\sigma}L\big(\gamma(\sigma),\dot \gamma(\sigma)\big)d\sigma \leqslant  \int_{-t}^{0} e^{\alpha\sigma}L\big(\gamma(\sigma),\dot \gamma(\sigma)\big)d\sigma$.
As the function $(q,v)\mapsto H\big(q, \partial_vL(q,v)\big)$ is coercive, there is $R>0$ such that $\|v\|>R$ implies $H\big(q, \partial_vL(q,v)\big)> \max \Big( \frac{\beta}{\alpha}+R_1 ,e^{\alpha t}\big( R_1+\alpha (C+1)\big)\Big)$.
% {\color{blue}$\max ( \frac{\beta}{\alpha}+ R_1+\alpha (C+1) , R_1+\alpha (C+1))$? }
  We conclude that for all $s\in [-t,0]$ we have $\|\dot\gamma(s)\| \leqslant R$ as desired.

The last part of the statement is immediate from the proof. 
\end{proof}

Assume now by contradiction that there are two extremal curves, $\gamma_0,\gamma_1 : [-t,0] \to \R^d$  such that $\gamma_0(-t) = \gamma_1(-t) = q_1$ and $\gamma_0(0) = \gamma_1(0) = q_2$. If $s\in [0,1]$ let $\gamma_s = (1-s)\gamma_0 + s \gamma_1$. Set 
$$C=  \max_{s\in [0,1] } \int_{-t}^0 e^{\alpha \sigma} L\big(\gamma_s(\sigma), \dot{\gamma_s}(\sigma)\big) \ d\sigma.$$

 We then consider $\widetilde{ \mathfrak L}$ that coincides with $\mathfrak L$ on $[-R,R] \times \T^d \times B(0,R)$.
such that   $\widetilde{ \mathfrak L}$ is quadratic at infinity and still Tonelli (such modifications are described in \cite{AF07}). Note that $\gamma_0$ and $\gamma_1$ are still extremal curves of $\widetilde{\mathfrak L}$ and still do not have conjugate points for $ \widetilde{\mathfrak L}$.
 
 We now shift to the functional setting of the Hilbert space $E$ of $H^1$  curves $\eta : [-t, 0] \to \R^d$ such that $\eta(-t) = \eta(0) = 0$ and with a norm $\|\cdot \|_E$ induced by the $H^1$ norm. We focus more precisely on curves in the affine space $\gamma_0+E = \gamma_1+E$. As is well known (see \cite{AF07}) the function 
 $$\mathcal F : \gamma \mapsto \int_{-t}^0 \widetilde{\mathfrak L}\big(s,\gamma(s), \dot{\gamma}(s)\big) \ ds$$
 is $C^2$ on $E$, coercive and verifies the Palais-Smale condition. Moreover, as there are no conjugate points along those curves, $\gamma_0$ and $\gamma_1$ are local strict minima of $\mathcal F$ in the sense that there are $\varepsilon >0$ and $\varepsilon'>0$ such that $\mathcal       F(\gamma_i+\eta)> \mathcal F(\gamma_i)+\varepsilon'$ for $i\in \{0,1\}$ and any $\eta$ of norm $\varepsilon$ (its Hessian is computed in \cite{CI99}  formula (52) for autonomous Lagrangians and the result holds equally in the time-dependent setting).
 
 This allows to use the Ambrosetti-Rabinowitz mountain pass lemma (\cite{S08}, Theorem 6.1 page 109). It asserts that the value 
 $$C' = \inf_\Gamma\max_{s\in [0,1]} \mathcal F\big(\Gamma(s)\big),$$
 where $\Gamma$ ranges over the set of homotopies from $\gamma_0$ to $\gamma_1$, is a critical value of $\mathcal F$. Note that $C' \leqslant C$ by construction of $\widetilde{\mathfrak L}$. More precisely, there exists an extremal curve $\gamma_2 \in E$ such that $\mathcal F(\gamma_2) = C'$ and such that for all $\varepsilon >0$, there exists a homotopy $\Gamma_\varepsilon$ between $\gamma_0$ and $\gamma_1$ such that 
 $$\max_{s\in [0,1]} \mathcal F\big(\Gamma_\varepsilon(s)\big) \leqslant C'+\varepsilon$$
 and $\Gamma_\varepsilon$ intersects the ball centered at $\gamma_2$ and of radius $\varepsilon$. This shows that $\gamma_2$ is not a local strict minimum of $\mathcal F$.
 
 To conclude, we notice that critical points of $\mathcal F$ are $C^2$ trajectories of the Euler-Lagrange flow of $\widetilde{\mathfrak L}$ (\cite{Maz}, Proposition 3.5.1). Hence $\gamma_2$ is $R$-Lipschitz and is also an extremal for $\mathfrak L$ so there are no conjugate points along this trajectory. But this is in contradiction with the fact that it is not a local strict minimum of $\mathcal F$.
 
\end{proof}
\begin{cor}\label{Corimagvert}
Let   $( \varphi_s)$ denote the conformally symplectic Hamiltonian flow on $\R^d$.  Then $\varphi_t(\{q_1\}\times \R^d)$ is a smooth graph above $\R^d$, the graph of $\partial_2\widetilde{\mathcal{A}}_t(q_1,\cdot)$.
\end{cor}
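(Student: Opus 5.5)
The plan is to show that the projection $\pi\circ\varphi_t$ restricted to the fiber $\{q_1\}\times\R^d$ is a diffeomorphism onto $\R^d$. Then I identify the resulting graph with the differential of the action in its second variable.

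\textbf{Step 1 (projection is a local diffeomorphism).} Consider $F_t: p\mapsto \pi\circ\varphi_t(q_1,p)$. This map is defined on the whole fiber: the flow is complete in positive time by Proposition \ref{PropExistGlobAttr}, and $H$ is Tonelli, hence convex and coercive. Its differential at $p$ is $D\pi\circ D\varphi_t$ restricted to the vertical space $V(q_1,p)$. If this differential had a kernel, some nonzero vertical vector $u$ would satisfy $D\pi(D\varphi_t u)=0$. The vectors $u$ and $D\varphi_t u$ would then be conjugate, which the hypothesis forbids. So $F_t$ is a local diffeomorphism, and its image $\varphi_t(\{q_1\}\times\R^d)$ is an immersed Lagrangian submanifold that is transverse to the vertical everywhere.

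\textbf{Step 2 (surjectivity and injectivity).} For surjectivity, fix $q_2$ and take a minimizer $\gamma$ in \eqref{actionRn}, which exists by Tonelli theory. It is an extremal, so with $p(s)=\partial_vL(\gamma(s),\dot\gamma(s))$ we get $\varphi_t(q_1,p(-t))=(q_2,p(0))$. Hence $q_2$ lies in the image of $F_t$.

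For injectivity, suppose $F_t(p)=F_t(p')=q_2$. Projecting the two orbits and reparametrizing time to $[-t,0]$ gives two extremals of $\mathfrak L$ joining $q_1$ to $q_2$. Proposition \ref{PTonelli} says there is exactly one such extremal. Therefore the curves coincide, so their initial velocities agree, and by the Legendre map $p=p'$.

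$F_t$ is thus a bijective local diffeomorphism, hence a diffeomorphism of $\R^d$. It follows that $\varphi_t(\{q_1\}\times\R^d)$ is the graph of the smooth map $q_2\mapsto p_{q_2}:=\pi_2\circ\varphi_t\big(q_1,F_t^{-1}(q_2)\big)$.

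\textbf{Step 3 (identification with $\partial_2\widetilde{\mathcal A}_t$).} By Proposition \ref{PTonelli}, $\widetilde{\mathcal A}_t(q_1,q_2)$ is the action of the unique extremal $\gamma_{q_2}$. This extremal depends smoothly on $q_2$, because it is obtained by flowing from $(q_1,F_t^{-1}(q_2))$. So $\widetilde{\mathcal A}_t(q_1,\cdot)$ is smooth. Differentiate the action along the smooth family $\gamma_{q_2}$ with the initial endpoint fixed, using the first variation formula. The Euler–Lagrange equation kills the interior term, leaving only the boundary term at $s=0$:
\[
e^{0}\,\partial_vL\big(q_2,\dot\gamma_{q_2}(0)\big)=p_{q_2}.
\]
Here one must check that the conformal momentum matches the flow variable. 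The flow $\varphi_s$ of $X^\alpha_H$ corresponds through the Legendre map to the extremals of $e^{\alpha s}L$, and the Hamiltonian momentum is exactly $\partial_vL$ (not $e^{\alpha s}\partial_vL$), as recalled in the proof of Proposition \ref{PTonelli}. At $s=0$ the factor $e^{\alpha s}$ equals $1$, so the two agree. This gives $\partial_2\widetilde{\mathcal A}_t(q_1,q_2)=p_{q_2}$.

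\textbf{Main obstacle.} I expect the delicate point to be the properness and global inversion in Step 2. Surjectivity comes from minimizers and injectivity from the uniqueness in Proposition \ref{PTonelli}. Smoothness of the inverse comes from Step 1, so no separate properness argument is needed. The remaining care lies in the momentum normalization in Step 3.
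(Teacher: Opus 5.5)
Your proof is correct, but Step 3 reaches the identification with $\partial_2\widetilde{\mathcal A}_t$ by a different route from the paper.

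Steps 1--2 are the paper's graph argument made explicit. The paper also gets existence of the orbit from a minimizer, uniqueness from Proposition \ref{PTonelli}, and transversality to the fibers from the absence of conjugate points. It leaves implicit the point you state, namely that a bijective local diffeomorphism $F_t$ is a diffeomorphism.

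For the identification, the paper does not differentiate the action along a family of extremals. It invokes Appendix B of Bernard's paper: $\widetilde{\mathcal A}_t$ is locally semi-concave, and $\partial_vL\big(q_2,\dot\gamma(0)\big)$ is a super-differential in the second variable. Hence $\widetilde{\mathcal A}_t(q_1,\cdot)$ is locally Lipschitz, and its almost-everywhere derivative equals the function $\nu_{q_1}$ whose graph is $\varphi_t(\{q_1\}\times\R^d)$. The paper then concludes because a locally Lipschitz function whose a.e. derivative is $C^k$ is $C^{k+1}$.

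Your first-variation computation replaces this nonsmooth input by a classical calculation. It is legitimate because $\gamma_{q_2}(s)=\pi\circ\varphi_{s+t}\big(q_1,F_t^{-1}(q_2)\big)$ depends $C^1$ on $q_2$. So you may differentiate under the integral and swap $\partial_s$ and $\partial_{q_2}$, since $X_H^\alpha$ is $C^1$. The boundary term at $s=-t$ vanishes because that endpoint is fixed.

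Your route is more elementary and self-contained. The paper's route leans on an external regularity result, but it yields semi-concavity of $\widetilde{\mathcal A}_t$ as a by-product.

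One small point: the lift of $H$ to $T^*\R^d$ is not coercive in the paper's sense, so Proposition \ref{PropExistGlobAttr} does not apply verbatim there. Positive-time completeness on $T^*\R^d$ still holds, because that flow is the lift of the complete flow on $T^*\T^d$.
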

\begin{proof}[Proof of Corollary \ref{Corimagvert}] We denote by  $(\tilde f_s$ the  discounted Euler-Lagrange flow on $\R^d$ of $ L$.  Recall  that $\mathfrak L$ has been defined in Equality \eqref{ELtimedepending}.  It is proven in  Appendix B of \cite{Bernard2008}\footnote{In  \cite{Bernard2008}, there is an extra hypothesis that the Lagrangian is periodic in time. However, this is not used in the proof of the result we use. A way of seing this is to consider $T>t$ and modify $\mathfrak L$ for $s\in [-T,-t]$ to make it $T$ periodic, then we can apply the results of  \cite{Bernard2008} to the modified Lagrangian.} that $\widetilde{\mathcal{A}}_t$ is locally semi-concave  and that for the unique extremal curve $\gamma:[-t, 0]\to \R^d$ such that $\gamma(-t)=q_1$ and $\gamma(0)=q_2$, then 
$$\Big(-\partial_v\mathfrak L\big(-t,q_1, \dot\gamma(-t)\big), \partial_v\mathfrak L\big(0,q_2, \dot\gamma(0)\big)\Big)=\Big(-e^{-\alpha t}\partial_v L\big(q_1, \dot\gamma(-t)\big), \partial_v L\big(q_2, \dot\gamma(0)\big)\Big)$$

 is a super-differential of $\widetilde{\mathcal{A}}_t$ at $(q_1, q_2)$ and that $\tilde f_t \big(q_1, \dot\gamma (-t)\big)=\big(q_2,  \dot\gamma(0)\big)$. This implies that $\varphi_t \Big(q_1,\partial_v L\big(q_1, \dot\gamma(-t)\big)\Big)=\Big(q_2,\partial_v L\big(q_2,  \dot\gamma(0)\big)\Big)$. This is the only orbit whose first projection joins $q_1$ to $q_2$ in time $t$. Hence $\varphi_t\big(\mathcal V(q_1)\big)$ is a graph above $\R^d$. Moreover, being the image by a conformal symplectic dynamics of the  Lagrangian submanifold $\mathcal V(q_1)$, $\varphi_t\big(\mathcal V(q_1)\big)$ is a Lagrangian submanifold that is  as smooth as $ \varphi_t$ is and that is transverse to the fibers because there is no conjugate points. Hence $\varphi_t\big(\mathcal V(q_1)\big)$ is the graph of a function $\nu_{q_1}$ that is as smooth as $\varphi_t$ is.\\
Then we know that for every $q\in\R^d$, $\nu_{q_1}(q)$ is a super-differential of $\widetilde{\mathcal{A}}_t(q_1, \cdot)$, which is locally Lipschitz because it is semi-concave.  A locally Lipschitz function whose almost everywhere differential coincide with a $C^k$ function is $C^{k+1}$. Hence $\widetilde{\mathcal{A}}_t(q_1, \cdot)$ is as differentiable as $H$ is and $\tilde \varphi_t\big(\mathcal V(q_1)\big)=\text{graph}\big(\partial_2\widetilde{\mathcal{A}}_t(q_1, \cdot)\big)$.

\end{proof}
\begin{nota}\rm\begin{enumerate}
\item If $x\in T^*\R^d=\R^d\times \R^d$ and $t\neq 0$, then when defined, 
$$G_t(x)=D\varphi_t\big(\varphi_{-t}(x)\big)V\big(\varphi_{-t}(x)\big)$$
 is transverse to the vertical $V(x)$ because there is no conjugate points. It is then the graph of a $d\times d$ symmetric matrix denoted by $S_t(x)$.
\item  When $S$, $S'$ are two symmetric matrices with the same size, we write $S<S'$ if the matrix $S'-S$ is positive definite.
\end{enumerate}
\end{nota}

\begin{prop}\label{PLipschitzconstant} Assume  $t>0$ and $\varepsilon \in (0, t)$. Then
$S_{-\varepsilon}(q_2, p_2)\leq \partial_2^2\widetilde{\mathcal{A}}_t(q_1, q_2)\leq S_\varepsilon(q_2, p_2)$ where $p_2=\partial_2\widetilde{\mathcal{A}}_t(q_1, q_2)$.
Moreover, this gives a uniform bound for $\partial_2^2\widetilde{\mathcal{A}}_t(q_1, q_2)$ on the set of $(q_1, q_2)$ such that 
$p_2\in [-K, K]^d$. 
\end{prop}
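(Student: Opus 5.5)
The plan is to identify $\partial_2^2\widetilde{\mathcal{A}}_t(q_1,q_2)$ with the matrix $S_t(x)$, where $x=(q_2,p_2)$. Then we compare the one-parameter family $s\mapsto S_s(x)$ using only two facts: the absence of conjugate points, and the behaviour of $G_s(x)$ as $s\to 0^\pm$.

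\textbf{Identification.} By Corollary \ref{Corimagvert}, $\varphi_t\big(\mathcal V(q_1)\big)$ is the graph of $\partial_2\widetilde{\mathcal{A}}_t(q_1,\cdot)$. It contains $x=(q_2,p_2)$, and $\varphi_{-t}(x)\in\mathcal V(q_1)$. Its tangent space at $x$ is therefore $D\varphi_t\big(\varphi_{-t}(x)\big)V\big(\varphi_{-t}(x)\big)=G_t(x)$. Hence
$$\partial_2^2\widetilde{\mathcal{A}}_t(q_1,q_2)=S_t(x).$$
It remains to prove $S_{-\varepsilon}(x)\le S_t(x)\le S_\varepsilon(x)$ for $0<\varepsilon<t$. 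We will in fact obtain strict inequalities.

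\textbf{Non-degeneracy from the absence of conjugate points.} Let $s\neq s'$ be such that $G_s(x)$ and $G_{s'}(x)$ are defined. If $S_s(x)-S_{s'}(x)$ were singular, then $G_s(x)\cap G_{s'}(x)\neq\{0\}$. Pulling back by $D\varphi_{s'}\big(\varphi_{-s'}(x)\big)$, we would get
$$D\varphi_{s-s'}V\big(\varphi_{-s}(x)\big)\cap V\big(\varphi_{-s'}(x)\big)\neq\{0\},$$
so $\varphi_{-s}(x)$ and $\varphi_{-s'}(x)$ would be conjugate, which is excluded. Consequently, on any interval of $s$ avoiding $s'$, the matrix $S_s(x)-S_{s'}(x)$ keeps a constant signature.

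\textbf{Behaviour near $s=0$ and conclusion.} The key local fact is that $S_s(x)\to+\infty$ as $s\to0^+$ and $S_s(x)\to-\infty$ as $s\to 0^-$, in the sense of quadratic forms. To see this, write the linearized flow in coordinates $(\delta q,\delta p)$. The conformal part $\alpha Z_\lambda$ contributes only the term $-\alpha\,\delta p$ to $\dot{\delta p}$, and this term preserves the vertical. Consequently, for a vertical vector $(0,w)$ at $\varphi_{-s}(x)$, we have
$$D\varphi_s(0,w)=\big(s\,\partial^2_{pp}H(x)\,w+o(s),\; w+O(s)\big).$$
Therefore $G_s(x)$ is the graph of $\big(s\,\partial^2_{pp}H(x)\big)^{-1}+O(1)$. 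Since $H$ is Tonelli, $\partial^2_{pp}H>0$, which gives the claimed limits. Both inequalities now follow by the continuity and constant-signature argument above.
\begin{itemize}
\item Upper bound: for $s\in(0,t)$, the matrix $S_t(x)-S_s(x)$ is non-singular. It is negative definite for $s$ near $0^+$, so it stays negative definite on $(0,t)$. Taking $s=\varepsilon$ gives $S_t(x)<S_\varepsilon(x)$.
\item Lower bound: for $s\in(0,\infty)$, the matrix $S_s(x)-S_{-\varepsilon}(x)$ is non-singular. It is positive definite near $0^+$, so it stays positive definite. Taking $s=t$ gives $S_{-\varepsilon}(x)<S_t(x)$.
\end{itemize}
Here we use that $s\mapsto S_s(x)$ is continuous wherever $G_s(x)$ is transverse to $V(x)$, which holds for every $s\neq0$ because there are no conjugate points.

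\textbf{Uniform bound.} The maps $x\mapsto S_{\pm\varepsilon}(x)$ are continuous and $\Z^d$-periodic in $q$. Take $x$ in $\T^d\times[-K,K]^d$. This set is compact, so for fixed small $\varepsilon$ the flow $\varphi_{\pm\varepsilon}$ is defined on a neighbourhood of it, even though the flow need not be complete in negative time. Hence $S_{\pm\varepsilon}$ is bounded there, and this yields the uniform bound on $\partial_2^2\widetilde{\mathcal{A}}_t$. If $\varepsilon$ is not small, we first replace it by a smaller $\varepsilon'$ and apply the monotonicity just proved.

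\textbf{Main obstacle.} I expect the main obstacle to be the rigorous small-time asymptotics of $G_s(x)$ as $s\to0^\pm$. One must check that the conformal term $-\alpha\,\delta p$ does not spoil the twist $\partial^2_{pp}H>0$, and that the expansion is uniform so the limits hold as quadratic forms. The fallback is to write the Riccati equation
$$\dot S=-\partial^2_{qq}H-S\,\partial^2_{pq}H-\partial^2_{qp}H\,S-S\,\partial^2_{pp}H\,S-\alpha S,$$
and to analyse it for $S^{-1}$ near $0$.
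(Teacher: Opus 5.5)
Your proof is correct and follows the paper's argument. Absence of conjugate points makes $S_s(x)-S_{s'}(x)$ nondegenerate for $s\neq s'$, the small-time asymptotics $S_s(x)\sim\frac1s\big(\partial^2_{pp}H(x)\big)^{-1}$ fix the signs, and continuity plus $\Z^d$-periodicity of $S_{\pm\varepsilon}$ give the uniform bound; the only differences are that you fix one endpoint and vary the other along an interval, where the paper uses the two connected parameter regions $E_\pm$, and that for a merely $C^2$ Hamiltonian your expansion yields a remainder $o(1/s)$ rather than $O(1)$ in $S_s(x)$, which is still all the argument needs.
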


\begin{proof}[Proof of Proposition \ref{PLipschitzconstant}  ] The proof is very close to the proof of Proposition 3.7 in \cite{MCA2008}. We use the notation $x_2=(q_2, p_2)$.

Because there is no conjugate points, $\big(G_\tau(x_2)\big)_{\tau\in \R^*}$ is a family of mutually transverse Lagrangian subspaces of $T_{x_2}T^*\T^d$ that are all transverse to  $V(x_2)$. We deduce that for all $\tau\neq \tau'$ in $\R^*$, the symmetric matrix $S_{\tau'}(x_2)-S_\tau(x_2)$ is non degenerate. Hence its index is constant on each connected set $E_+=\{(\tau, \tau')\in (0,+\infty)^2; 0<\tau<\tau'\}$ and $E_-=(-\infty, 0)\times (0, +\infty)$. To determine these indices, we only need to compute it for some well-chosen pairs  $(\tau_\pm, \tau_\pm')\in E_\pm$.

Let us explain how to choose $(\tau_\pm, \tau_\pm')\in E_\pm$ that are close to $(0, 0)$. For $x\in T^*\T^d$, we introduce a notation for $D\varphi_t(x)$:
$$D\varphi_t(x)=\begin{pmatrix} a_t(x)&b_t(x)\\
c_t(x)&d_t(x)
\end{pmatrix}.$$
The linearized equations imply that
$$\dot a_t(x)=\partial^2_{q, p}H\big(\varphi_t(x)\big);\quad \dot b_t=\partial^2_{p, p}H\big(\varphi_t(x)\big);\quad \dot c_t=-\partial^2_{q, q}H\big(\varphi_t(x)\big);\quad \dot d_t=-\big(\partial ^2_{p, q}H\big(\varphi_t(x)\big)+\alpha {\bf 1}\big).$$
We deduce that $b_t(x)=t\partial^2_{p,p}H\big(\varphi_t(x)\big)+o_{t\to 0}(t)$ and $d_t(x)={\bf 1}+o_{t\to 0}(1)$.

Observe that $G_\tau(x_2)$ is the graph of $S_\tau(x_2)=d_\tau\big(\varphi_{-\tau}(x_2)\big)b_\tau\big(\varphi_{-\tau}(x_2)\big)^{-1}\stackbin[\tau\to 0]{\sim}{}\frac{1}{\tau}\big(\partial^2_{p,p}H(x_2)\big)^{-1}$. Hence for $\tau'=-
\tau$ or $\tau'=2\tau$, we obtain
$$S_{\tau'}(x_2)-S_\tau (x_2)\stackbin[(\tau, \tau')\to (0,0)]{\huge\sim}{}\Big(\frac{1}{\tau'}-\frac{1}{\tau}\Big)\big(\partial^2_{p,p}H(x_2)\big)^{-1}$$
which is negative definite in the two considered cases $\tau'=-\tau<0$ and $\tau'=2\tau>0$. For $\tau>0$, we have  $(-\tau, \tau)\in E_-$ and $(\tau, 2\tau)\in E_+$, hence $S_{\tau'}(x_2)-S_\tau(x_2)$ is positive definite on $E_-$ and positive negative on $E_+$ 

As $(-\varepsilon, t)\in E_-$ and $(\varepsilon, t)\in E_+$, we conclude that $$S_{-\varepsilon}(x_2)<S_t(x_2)=\partial_2^2\widetilde{\mathcal{A}}_t(q_1, q_2)<S_\varepsilon(x_2).$$
Using the continuity of $S_\varepsilon$ and its periodicity in the $q$ variable, we deduce the conclusion on the uniform bound.
\end{proof}

\begin{nota}\rm We denote by $\mathcal K\subset \R^d\times \R^d$ the lift of the global attractor.
\end{nota}
Observe that every point of $\mathcal K$ has a full negative orbit and that the $p$ coordinate is bounded along this orbit. This is even a characterization of the elements of $\mathcal K$.

\begin{prop}\label{PboundnonlinearGreen} There exist two constants $C_1>0$, $C_2>0$ such that 
for every compact and convex  subset $K_0\subset \R^d$, there exists $T=T_{K_0}>0$, such that 
$$\forall x_0\in \mathcal K\cap (K_0\times \R^d), \forall t>T, \quad \varphi_t\Big(\mathcal V\big (\pi \circ \varphi_{-t}(x_0)\big)\Big)\cap (K_0\times \R^d)\subset \R^d\times B(0, C_1)$$
and is the graph of a $C_2$-Lipschitz function from $K_0$ to $B(0, C_1)$.
\end{prop}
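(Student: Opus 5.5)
The plan is to read everything off Corollary~\ref{Corimagvert} and Proposition~\ref{PLipschitzconstant}. Fix $x_0=(q_0,p_0)\in\mathcal K$ with $q_0\in K_0$, and write $\varphi_s(x_0)=\big(q(s),p(s)\big)$. For $t>0$, Corollary~\ref{Corimagvert} says that $\varphi_t\big(\mathcal V(q(-t))\big)$ is the graph of $\partial_2\widetilde{\mathcal A}_t(q(-t),\cdot)$. So the part of it lying over $K_0$ is automatically a graph over $K_0$, and two things remain to be shown:
\begin{itemize}
\item a momentum bound $\|\partial_2\widetilde{\mathcal A}_t(q(-t),q)\|<C_1$ for $q\in K_0$ and $t>T_{K_0}$, with $C_1$ independent of $K_0$;
\item a Lipschitz bound.
\end{itemize}
The Lipschitz bound follows from the momentum bound. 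By Proposition~\ref{PLipschitzconstant}, if $p_2=\partial_2\widetilde{\mathcal A}_t(q_1,q_2)$ lies in a fixed box $[-K,K]^d\supset B(0,C_1)$, then $\partial^2_2\widetilde{\mathcal A}_t(q_1,q_2)$ is squeezed between $S_{-\varepsilon}(q_2,p_2)$ and $S_{\varepsilon}(q_2,p_2)$. These are continuous and $\Z^d$-periodic in $q$, hence uniformly bounded on $\R^d\times \overline{B(0,C_1)}$, which gives $C_2$. Convexity of $K_0$ is then used to integrate the Hessian bound along segments in $K_0$ and obtain the global Lipschitz estimate there.

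For the momentum bound, let $q\in K_0$ and let $\gamma:[-t,0]\to\R^d$ be the unique minimizer from $q(-t)$ to $q$ given by Proposition~\ref{PTonelli}. Set $P(s)=\partial_vL(\gamma(s),\dot\gamma(s))$, so that $P(0)=\partial_2\widetilde{\mathcal A}_t(q(-t),q)$. First I bound the action of $\gamma$ by a comparison curve. Follow $s\mapsto q(s)$ on $[-t,-1]$, then join $q(-1)$ to $q$ by an affine segment on $[-1,0]$. The point $q(-1)$ lies at bounded distance from $q_0\in K_0$, because $\mathcal K$ is compact modulo $\Z^d$ and hence $\dot q$ is bounded on it. Since $L$ is bounded on the projection of $\mathcal K$, the first piece costs at most $\sup_{\mathcal K}L/\alpha$. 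The second piece costs at most a constant depending on $\operatorname{diam}K_0$. Hence the action of $\gamma$ is at most some $A(K_0)$, uniformly in $t$ and $x_0$.

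Next I use the dissipation estimate from Lemma~\ref{LemmLyapfunct}: along orbits, $\frac{d}{ds}H\le \alpha\big(R-H\big)$ with $R=\sup_qH(q,0)$. Integrating from a time $s_0<0$ to $0$ gives
\[
H\big(\gamma(0),P(0)\big)\le R+e^{\alpha s_0}\big(H(\gamma(s_0),P(s_0))-R\big).
\]
The plan is to choose $s_0\in[-S-1,-S]$ for a suitable $S=S(K_0)$ and set $T_{K_0}=S+1$. By the mean value argument of Lemma~\ref{aprioricompactness}, some $s_0$ in that window satisfies $L\big(\gamma(s_0),\dot\gamma(s_0)\big)\le e^{\alpha(S+1)}A(K_0)$. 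I then want to convert this into $H(s_0)\le e^{\alpha S}\,c$ with $c$ independent of $K_0$ after absorbing constants. This would make the bracket above bounded by a universal constant. Then $H(\gamma(0),P(0))\le R+1$, and coercivity of $H$ gives $C_1$ depending only on $H$ and $\alpha$.

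The main obstacle is exactly this conversion from an $L$-bound at $s_0$ to an $H$-bound of size comparable to $e^{\alpha S}$. For a general Tonelli $L$, the bound $L\le E$ yields a bound on $H(q,\partial_vL)$ that may grow faster than $E$. If so, I would instead use the identity from the proof of Lemma~\ref{aprioricompactness}: $H(s)$ is written exactly as $e^{-\alpha(s-s_0)}H(s_0)$ plus $\alpha e^{-\alpha s}$ times a weighted action integral. I would apply it on the final interval $[-S,0]$ with $s_0$ chosen where $\|\dot\gamma\|$ is bounded. The weighted action there is at most $A(K_0)$, and I would show, using positivity of $L$ and the exponential weight, that the contribution of early times is damped by the factor $e^{-\alpha S}$, so that only a $K_0$-independent quantity survives at $s=0$ once $S$ is large. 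Failing that, I would fall back on the absorbing property of $\{H\le R+1\}$: any orbit whose energy at time $-S$ is at most $h(K_0)$ enters this set before time $0$ once $S\ge \alpha^{-1}\log\big(h(K_0)-R\big)$, and it stays there because the set is forward invariant.
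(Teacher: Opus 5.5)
Your reduction is fine: Corollary \ref{Corimagvert} gives the graph property, Proposition \ref{PLipschitzconstant} gives the Lipschitz bound once the momentum lies in a fixed ball, and convexity of $K_0$ lets you integrate the Hessian bound. The gap is in the momentum bound, and specifically in making $C_1$ independent of $K_0$. That independence is the real content of the statement, and it is what Proposition \ref{PFinale} uses.

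Your comparison curve jumps a distance of up to $C+\operatorname{diam}K_0$ in unit time, so your action bound $A(K_0)$ genuinely depends on $K_0$. The dissipation step cannot remove this dependence:
\begin{itemize}
\item the mean value choice of $s_0\in[-S-1,-S]$ only gives $e^{\alpha s_0}L(\gamma(s_0),\dot\gamma(s_0))\le A(K_0)$;
\item Lemma \ref{LemmLyapfunct} gives $H(\gamma(0),P(0))-R\le e^{\alpha s_0}\big(H(\gamma(s_0),P(s_0))-R\big)$.
\end{itemize}
The two exponential factors cancel exactly. So even in the most favourable case $H(q,\partial_vL)=L$ (e.g. $L=\frac12\|v\|^2$), you only get $H(\gamma(0),P(0))\lesssim R+e^{\alpha}A(K_0)$, whatever $S$ is. No ``absorbing of constants'' makes this universal, so the problem is not only the $L$-to-$H$ conversion you flag.

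Your fallbacks do not repair this. The absorbing-set argument needs an energy bound at time $-S$ that does not grow with $S$. The only one available is of size $e^{\alpha S}A(K_0)$, which is the same estimate again. The other fallback is not made concrete.

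The missing idea is a better comparison curve, chosen so that the action bound itself is independent of $K_0$. Take the affine segment from $q(-t)$ to $q$ over the whole interval $[-t,0]$.
\begin{itemize}
\item Since $\|\dot q\|\le C$ on $\mathcal K$, its speed is at most $\|q_0-q(-t)\|/t+\|q-q_0\|/t\le C+\operatorname{diam}(K_0)/t\le C+1$ once $t>T_{K_0}:=\max\{\operatorname{diam}K_0,1\}$.
\item Hence $\widetilde{\mathcal A}_t(q(-t),q)\le \widetilde C:=\alpha^{-1}\max L_{|\R^d\times B(0,C+1)}$, independent of $K_0$, $t$ and $x_0$.
\item As $L\ge 0$, the minimizer $\gamma$ has action at most $\widetilde C$ on $[-1,0]$.
\item Lemma \ref{aprioricompactness}, with time $1$ and constant $\widetilde C$, then bounds $\dot\gamma(0)$. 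This bounds $\partial_2\widetilde{\mathcal A}_t(q(-t),q)=\partial_vL(\gamma(0),\dot\gamma(0))$ by a constant $C_1$ depending only on $L$, $\alpha$ and $\mathcal K$.
\end{itemize}
$K_0$ then enters only through $T$, and no dissipation estimate is needed. This is exactly the paper's argument.

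If you want to keep the dissipation route, you would have to use the whole interval. For instance, $s\mapsto e^{\alpha s}H(\gamma(s),P(s))$ is monotone, with derivative $-\alpha e^{\alpha s}L\le 0$; you would combine this with a lower bound of $L$ in terms of $H$. That is considerably more delicate.
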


\begin{rk}\rm
Observe that we choose $C_1$ and $C_2$ before fixing $K_0$. This will be important for the proof of Proposition \ref{PFinale}.
\end{rk}

\begin{proof}[Proof of Proposition \ref{PboundnonlinearGreen} ] We endow $\R^d$ with its usual Euclidean norm.
Let $(q_0, p_0)\in \mathcal K$, then for all $t$ we have $\big(q_0(t), p_0(t)\big)=\varphi_t(q_0, p_0)\in \mathcal K$. Hence, there exists $C>0$ that  depends only  on $\mathcal K$, such that 
$$\forall s \in \R,\quad  \|\dot q_0(s)\|=\big\|\partial _pH\big(\varphi_s(q_0, p_0)\big)\big\|\leq C.$$
We deduce that 
$$\forall t>0, \quad \Big\| \frac{1}{t}\big(q_0-q_0(-t)\big)\Big\|\leq C.$$
Let now $K_0$ be a compact subset in $\R^d$. We assume that $(q_0, p_0)\in \mathcal K\cap (K_0\times \R^d)$. We    denote $\Delta=\max_{Q, Q'\in K_0}\| Q-Q'\|$.  Let   $q\in K_0$ and $T>\max\{\Delta, 1\}$. We define for  $t\geq T $  and $s\in [-t, 0]$
$$\gamma_q^t(s)=\frac{s+t}{t} q-\frac{s}{t} q_0(-t).$$
Then we have 
$$\|\dot\gamma_q^t(s)\|=\frac{\| q_0(-t)-q\|}{t}\leq \frac{\| q_0(-t)-q_0\|}{t}+\frac{\|q-q_0\|}{t}\leq C+\frac{\Delta}{T} \leq C+1.$$
Then if
$\widetilde C=\frac{1}{\alpha}\max L_{|\R^d\times B(0, C+1)}$, we have

$$\widetilde{\mathcal A}_t(q_0(-t), q)\leq \int_{-t}^0e^{\alpha s} L\big(\gamma_q^t(s), \dot\gamma_q^t(s)\big)ds \leq \alpha\tilde C\int_{-t}^0 e^{\alpha s}ds \leq\widetilde C.$$
Let now $\gamma$ be the unique minimizing curve between $\big(-t, q_0(-t)\big)$ and $(0,q)$. As $t\geq 1$ and $L\geq 0$, we have 

$$\int_{-1}^0 e^{\alpha s}L\big(\gamma(s), \dot\gamma(s)\big)ds\leq \widetilde{\mathcal A}_t(q_0(-t), q)\leq \widetilde C.$$
%{\color{green} Old version : Because $L$ is coercive, this gives a bound (depending only on $\widetilde C$) for $\dot\gamma(s)$ for some $s\in [-1, 0]$, and then, using the flow,  a bound for $\dot\gamma(0)$. }
{ Using Lemma \ref{aprioricompactness} we obtain} a bound for $\dot\gamma(0)$, only depending on $\widetilde C$.
This gives also a bound $C_1$ for $\partial_vL\big(\gamma(0), \dot\gamma (0)\big)=\partial_2\widetilde{\mathcal A}_t(q_0(-t), q)$. We use Proposition \ref{PLipschitzconstant} and Corollary \ref{Corimagvert} to conclude.
\end{proof}
\begin{rk}\rm
In the proof of Proposition \ref{PboundnonlinearGreen}, we use a significant difference between the discounted case and the usual symplectic Hamiltonian case~: for every $A>0$ and every $B>0$, there exists $C>0$ such that for every   $\gamma:(-\infty, A]\to\R^d$ such that $\|\dot\gamma\|\leq B$, then $\int_{-\infty}^A e^{\alpha s}L\big(\gamma(s), \dot\gamma(s)\big)ds\leq C$.
\end{rk} 
 By Ascoli's Theorem and Proposition \ref{PboundnonlinearGreen}, for every orbit $x(t)=\big(q_0(t), p_0(t)\big)=\big(\varphi_t(q_0, p_0)\big)$ of a point $(q_0, p_0)$ of $\mathcal K$, one can obtain convergent  sequences of $\partial_2 \widetilde{\mathcal{A}}_{t_k}(q_0(-t_k), \cdot)$ for the compact-open topology  %for the topology of uniform convergence on compact subsets of $\R^d$ 
 with $t_k$ going to infinity.
\begin{nota}\rm
Let  $\Omega$ be the set of   all the limit points  of $\partial_2 \widetilde{\mathcal{A}}_t(q_0(-t), \cdot)$  when $t\to+\infty$ for the compact-open topology for every $(q_0, p_0)\in \mathcal K$.
\end{nota}
Then  $\Omega \subset C^0\big(\mathbb{R}^d, B(0, C_1)\big)$.  Let us prove the invariance of $\Omega$ by the action of  the flow. Take a small $\tau$.  Then we have
 $$\varphi_t\Big(\mathcal V\big( \pi \circ \varphi_{-t}\circ x(\tau)\big)\Big)=\varphi_\tau \circ \varphi_{t-\tau}\Big(\mathcal V\big(\pi\circ\varphi_{-(t-\tau)}\circ x(0)\big)\Big)$$
 hence 
$$\text{graph}\Big(\partial_2\widetilde{\mathcal A}_t\big(\pi\circ\varphi_{-t}\big(x(\tau)\big), \cdot\big)\Big)=\varphi_\tau\Big(\text{graph}\big(\partial_2\widetilde{\mathcal A}_{t-\tau}( x(0), \cdot)\big)\Big).$$
So when $g\in\Omega$, then for every $t\in\R$, $\varphi_t\big(\text{graph}(g)\big)$ is also the graph of a function of $\Omega$. We will use the notation $\varphi_t\big(\text{graph}(g)\big)=\text{graph}\big(\Phi_t(g)\big)$. Hence $(\Phi_t)$ is a flow on $\mathcal G=\{\text{graph}(g), g\in \Omega\}$. We deduce that the set 
\[
\left\{ \big(x,g(x)\big)~:~ x\in \mathbb{R}^d, g\in \Omega \right\}
\]
is invariant under the conformally Hamiltonian flow $\varphi_t$. 
\begin{prop}\label{PFinale} Let $u_\alpha$ be the discounted weak KAM solution. Then $\Omega=\{ Du_\alpha\}$ and the graph of $Du_\alpha$ is contained in $\mathcal K$.
\end{prop}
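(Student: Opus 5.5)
We will show that every $g\in\Omega$ is the differential of a $\Z^d$-periodic function $u$ satisfying \eqref{discountedHJequation}. Uniqueness of the viscosity solution then forces $u=u_\alpha$, so $\Omega=\{Du_\alpha\}$.

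\textbf{Step 1: structure of the limits.} Fix $g\in\Omega$, the limit of $\partial_2\widetilde{\mathcal A}_{t_k}\big(q_0(-t_k),\cdot\big)$ along some sequence $t_k\to+\infty$.
\begin{itemize}
\item Each approximant is a gradient with values in $B(0,C_1)$ and is $C_2$-Lipschitz on compact sets, by Proposition \ref{PboundnonlinearGreen}.
\item Normalize $w_k=\widetilde{\mathcal A}_{t_k}\big(q_0(-t_k),\cdot\big)-\widetilde{\mathcal A}_{t_k}\big(q_0(-t_k),q_0\big)$. By Ascoli these converge, up to a subsequence, in $C^1_{loc}$ to a $C^{1,1}$ function $w$ with $Dw=g$.
\item By the invariance discussion above, $\text{graph}(g)$ is invariant under $\varphi_t$ for all $t$. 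This is because $\Phi_t$ acts on $\Omega$, and the limits along $t_k$ and $t_k-\tau$ coincide, which follows from the displayed identity.
\end{itemize}
Hence $\text{graph}(Dw)$ is a $C^{1,1}$ invariant Lagrangian graph over $\R^d$. The computation of Remark \ref{rkexactLagweakKAM} works verbatim on $\R^d$. It gives $\alpha w+H(q,Dw)=c$ on $\R^d$ for some constant $c$.

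\textbf{Step 2: periodicity.} We need $g$ to be $\Z^d$-periodic. The graph is bounded, since $|g|\le C_1$, and invariant. So it lies in the set of points with full orbit and bounded $p$, which is $\mathcal K$ by the characterization noted before Proposition \ref{PboundnonlinearGreen}. This already gives $\text{graph}(g)\subset\mathcal K$.

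For periodicity, take $k\in\Z^d$ and consider $\text{graph}(g(\cdot+k))$. It is also an invariant bounded graph, because $H$ is periodic. Two invariant Lipschitz graphs $\text{graph}(Dw_1)$ and $\text{graph}(Dw_2)$, with $\alpha w_i+H(q,Dw_i)=c_i$, can be compared as follows.
\begin{itemize}
\item Along an orbit $q(s)$ in $\text{graph}(Dw_1)$, differentiate $w_1(q(s))-w_2(q(s))$.
\item By convexity, $\frac{d}{ds}\big(w_1-w_2\big)(q(s))\le -\alpha\big(w_1-w_2\big)+(c_1-c_2)$.
\item Since $w_1-w_2$ has at most linear growth and orbits in $\mathcal K$ have speed at most $C$, running this backward in time gives $\alpha(w_1-w_2)\ge c_1-c_2$ everywhere. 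Symmetrically $\alpha(w_1-w_2)\le c_1-c_2$.
\end{itemize}
So $w_1-w_2$ is constant and $Dw_1=Dw_2$. Applied to $g$ and $g(\cdot+k)$, this yields periodicity.

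Alternatively, one can show that $w$ is a backward calibrated (dominated) function for the discounted action. This follows by passing to the limit in the minimizing property of $\widetilde{\mathcal A}_{t_k}$, via Proposition \ref{PTonelli}, and then invoking uniqueness of discounted weak KAM solutions among functions of sublinear growth.

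\textbf{Step 3: conclusion.} Once $g$ is periodic, write $w=\langle c',q\rangle+v$ with $v$ periodic. The equation $\alpha w+H=c$ with $w$ bounded by a linear function forces $c'=0$, because $\alpha\langle c',q\rangle$ would be unbounded while $H(q,Dw)$ is bounded. Then $w-c/\alpha$ is a periodic $C^{1,1}$ solution of \eqref{discountedHJequation}, hence equals $u_\alpha$. This gives $g=Du_\alpha$ and $\Omega=\{Du_\alpha\}$, with $\text{graph}(Du_\alpha)\subset\mathcal K$ by Step 2.

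\textbf{Main obstacle.} The hard part is Step 2, the comparison or uniqueness argument on the noncompact cover $\R^d$. It requires careful use of the linear growth of $w_1-w_2$ against the exponential factor $e^{\alpha s}$ in backward time. I would try the Gronwall-type backward argument first.
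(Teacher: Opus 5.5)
There is a genuine gap, and it is in Step~1 rather than Step~2. You claim that each $\text{graph}(g)$, $g\in\Omega$, is $\varphi_t$-invariant because ``the limits along $t_k$ and $t_k-\tau$ coincide''. The displayed identity does not give this. With $t=t_k+\tau$, it says that $\varphi_\tau(\text{graph}\,g)$ is the limit of the graphs of $\partial_2\widetilde{\mathcal A}_{t_k+\tau}\big(q_0(-t_k),\cdot\big)$, i.e.\ of $\varphi_{t_k+\tau}\big(\mathcal V(q_0(-t_k))\big)$. That is a limit point attached to the base point $x(\tau)$. So you get $\Phi_\tau(g)\in\Omega$: the \emph{set} $\Omega$ is invariant, which is all the paper asserts. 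But $\Phi_\tau(g)=g$ would require the same fibre, pushed for times $t_k$ and $t_k+\tau$, to have the same limit. That is precisely the uniqueness you are trying to prove.

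Everything downstream rests on this claim. Remark~\ref{rkexactLagweakKAM} needs an invariant graph to give $\alpha w+H(q,Dw)=c$. Your comparison differentiates $w_1$ along orbits contained in $\text{graph}(Dw_1)$ and uses both equations. Your alternative route is circular in the same way. Splitting the action at time $-s$ gives $w=\min_y\{e^{-\alpha s}w^{(s)}(y)+\widetilde{\mathcal A}_s(y,\cdot)\}$, where $w^{(s)}$ is a limit of $\widetilde{\mathcal A}_{t_k-s}\big(q_0(-t_k),\cdot\big)$. That is an element attached to $x(-s)$, not $w$ itself, so you get neither the fixed-point nor the domination property for $w$.

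There is also a sign slip. Fenchel's inequality $Dw_2\cdot\dot q\le H(q,Dw_2)+L(q,\dot q)$ gives $\frac{d}{ds}(w_1-w_2)\ge(c_1-c_2)-\alpha(w_1-w_2)$, the reverse of what you wrote. It is this direction which, run backward, yields $\alpha(w_1-w_2)\ge c_1-c_2$. With that fix your comparison would be a valid uniqueness argument for two \emph{invariant} graphs, but invariance is exactly what is missing.

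The paper avoids individual invariance altogether, in two stages.

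\textbf{Periodicity, from the variational definition.} The limit $u$ of $\widetilde{\mathcal A}_{t_k}\big(q_0(-t_k),\cdot\big)$ exists because $\widetilde{\mathcal A}_{t_k}\big(q_0(-t_k),q_0\big)$ converges. Proposition~\ref{PboundnonlinearGreen}, applied on the convex hull $K_0$ of backward images of $K\times B(0,C_1)$, bounds the speed of the minimiser $\gamma$ from $q_0(-t)$ to $q$ on $[-\tau-1,0]$. Inserting the translation by $\kappa$ during $[-\tau-1,-\tau]$, then following $\gamma+\kappa$, gives $\widetilde{\mathcal A}_t\big(q_0(-t),q+\kappa\big)\le\widetilde{\mathcal A}_t\big(q_0(-t),q\big)+C'e^{-\alpha\tau}$. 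Hence $u$ is $\Z^d$-periodic.

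\textbf{Uniqueness, using only set invariance of $\Omega$.} Suppose $Du_1\neq Du_2$ in $\Omega$. The two graphs meet above a minimum point and a maximum point of $u_1-u_2$ on $\T^d$, which produces a loop $\eta$ with $\int_\eta\lambda=\Delta>0$. Conformal symplecticity gives $\int_{\varphi_t\circ\eta}\lambda=e^{-\alpha t}\Delta\to+\infty$ as $t\to-\infty$. So $\Phi_t(Du_1)$ and $\Phi_t(Du_2)$, which are still in $\Omega$, must be arbitrarily far apart somewhere. This contradicts the fact that their graphs lie in the compact attractor.

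Only once $\Omega$ is a singleton is its graph invariant, and Remark~\ref{rkexactLagweakKAM} then identifies it with $Du_\alpha$. Your Step~3 is fine, and so is the inclusion $\text{graph}(g)\subset\mathcal K$, which follows from the invariance of the union of the graphs. What you need is a mechanism of this kind, one that compares distinct elements of $\Omega$ through their $\Phi_t$-orbits rather than assuming each is fixed.
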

%{\color{green}OLD VERSION of Proposition \ref{PFinale} The graphs of the limit points of $(\partial_{2}\widetilde{\mathcal{A}}_t(q_0(-t), \cdot))$ when $t\to+\infty$ for the compact-open topology are {\color{red} included } in $\mathcal K$, and there is only one such graph,   the graph of $du_\alpha$, where $u_\alpha$ is the discounted weak KAM solution.
%}

 \begin{rk}\rm
The fact that the graph of $Du_\alpha$ is contained in $\mathcal K$ is not surprising. It is proven in \cite{AHV} that this graph is contained in the so-called Birkhoff attractor that is contained in $\mathcal K$.
\end{rk}
\begin{proof}[Proof of Proposition \ref{PFinale}] We  begin by proving that every element of $\Omega$  is $\Z^d$-periodic  and even is an exact one-form on $\T^d$. 

We fix $(q_0, p_0)\in \mathcal K$ and denote $\big(q_0(t), p_0(t)\big)=\varphi_t(q_0, p_0)$. Let $(t_k)$ be a sequence of positive numbers that tends to $+\infty$ such that $ \partial_2\widetilde{\mathcal{A}}_{t_k}(q_0(-t_k), \cdot)$ converges for the compact-open topology to some function.  Observe that $ \widetilde{\mathcal{A}}_{t_k}(q_0(-t_k), q_0)=\int_{-t_k}^0e^{\alpha t}L\big(q_0(t), \dot q_0(t)\big)dt$ converges when $k\to \infty$ because $(q_0, p_0)\in \mathcal K$ and then $t\in\R\mapsto \dot q_0(t)$ is bounded. We deduce that $ \widetilde{\mathcal{A}}_{t_k}(q_0(-t_k), \cdot)$ also converges for the compact-open topology when $k$ tends to $\infty$. Then the limit of $ \widetilde{\mathcal{A}}_{t_k}(q_0, \cdot)$ is a primitive of the limit of $ \partial_2\widetilde{\mathcal{A}}_{t_k}(q_0(-t_k), \cdot)$ and then if we show that the limit of $ \widetilde{\mathcal{A}}_{t_k}(q_0(-t_k), \cdot)$ is $\Z^d$-periodic, it is also the case for the limit of the derivatives and they are exact as one-forms on $\T^d$. We use the notation $u$ for the limit of $ \widetilde{\mathcal{A}}_{t_k}(q_0(-t_k), \cdot)$.

Let us fix $\kappa\in\Z^d$. Let $K\subset \R^d$ be a  compact  subset that contains $q_0$. We want to prove that for $q\in K$, we have 
$u(q)=u(q+\kappa)$.  We fix $\varepsilon >0$ and we prove that for $t$ large enough, we have $\widetilde{\mathcal A}_t(q_0(-t), q+\kappa)\leq \widetilde{\mathcal A}_t(q_0(-t), q)+\varepsilon$. Reversing $q$ and $q+\kappa$, this will give the wanted result.

Proposition \ref{PboundnonlinearGreen} yields two constants $C_1$, $C_2$. We deduce a constant $C>0$ such that $\partial_pH\big(\R^d\times B(0, C_1)\big)\subset \R^d\times B(0, C)$ and introduce $C'=\max\{ L(q', v'); \  \| v'\|\leq \| \kappa\|+C_1\}$. 
We fix a large $\tau>0$ such that $C'e^{-\alpha\tau}<\varepsilon$. We denote
$$K_0=\text{convex hull}\Big(\pi\Big( \bigcup_{s\in [0, \tau+1]}\varphi_{-s}\big(K\times B(0, C_1)\big)\Big)\Big).$$
Using Proposition  \ref{PboundnonlinearGreen}, we associate $T$ to $K_0$.\\
Let $\gamma:[-t, 0]\to \R^d$ be the minimizer between $q_0(-t)$ and $q$ for some $t\geq  T + \tau+1$. Then 
$$\Big(q, \partial _vL\big(q, \dot\gamma (0)\big)\Big)=\big(q, \partial_{2}\widetilde{\mathcal A}_{t }(q_0(-t), q)\big)\in \varphi_{t }\Big(\mathcal V\big( \pi\circ\varphi_{-t}(q_0, p_0)\big)\Big)\cap (K\times \R^d)\subset K\times B(0, C_1).$$

Hence for $s\in [0, \tau+1]$, 
\begin{multline*}
\Big(\gamma (-s), \partial _vL\big(\gamma (-s), \dot\gamma (-s)\big)\Big)=\\
\Big(\gamma (-s), \partial_{2}\widetilde{\mathcal A}_{t-s}(q_0(-t), \gamma(-s)\big)\Big)\in \varphi_{t-s}\Big(\mathcal V\big( \pi\circ \varphi_{-(t-s)}\circ\varphi_{-s}(q_0, p_0)\big)\Big)\cap (K_0\times \R^d)
\end{multline*}
and thus we have $\|\dot\gamma (-s)\|=\Big\| \partial_p H\Big( \gamma (-s),\partial _vL\big(\gamma (-s), \dot\gamma (-s)\big)\Big)\Big\|\leq C$.

We denote by $\eta:[-\tau-1, -\tau]\to \R^d$ the geodesic between $\gamma(-\tau-1)$ and $\gamma (-\tau)+\kappa$  , then $\dot\eta=\gamma (-\tau)-\gamma(-\tau-1)+\kappa$ and thus $\|\dot\eta(s)\|\leq \| \kappa\|+C$. We have for $q\in K$
\[\begin{split}\widetilde{\mathcal A}_t(q_0(-t), q+\kappa) & \leq \int_{-t}^{-\tau-1} e^{\alpha s}L\big(\gamma(s), \dot\gamma(s)\big)ds+\int_{-\tau-1}^{-\tau} e^{\alpha s}L\big(\eta(s), \dot\eta(s)\big)ds+\int_{-\tau}^{0} e^{\alpha s}L\big(\gamma(s)+\kappa , \dot\eta(s)\big)ds 
\\
& =\int_{-t}^{-\tau-1} e^{\alpha s}L\big(\gamma(s)+\kappa, \dot\gamma(s)\big)ds+\int_{-\tau-1}^{-\tau} e^{\alpha s}L\big(\gamma(s)+\kappa, \dot\gamma(s)\big)ds+\int_{-\tau}^{0} e^{\alpha s}L\big(\gamma(s)+\kappa , \dot\eta(s)\big)ds 
\\
&\qquad +\int_{-\tau-1}^{-\tau} e^{\alpha s}L\big(\eta(s), \dot\eta(s)\big)ds-\int_{-\tau-1}^{-\tau} e^{\alpha s}L\big(\gamma(s)+\kappa, \dot\gamma(s)\big)ds
\\
 &= \widetilde{\mathcal A}_t(q_0(-t)+\kappa, q+\kappa)+\int_{-\tau-1}^{-\tau} e^{\alpha s}L\big(\eta(s), \dot\eta(s)\big)ds -\int_{-\tau-1}^{-\tau}  e^{\alpha s}L\big(\gamma(s)+\kappa, \dot\gamma(s)\big)ds\\
&\leq \widetilde{\mathcal A}_t(q_0(-t), q)+\int_{-\tau-1}^{-\tau} e^{\alpha s}L\big(\eta(s), \dot\eta(s)\big)ds\leq \widetilde{\mathcal A}_t(q_0(-t), q)+C'e^{-\alpha\tau}.\end{split}\]

Because $C'e^{-\alpha\tau}<\varepsilon$, this gives  the wanted conclusion.
%{\color{blue} METTRE UN DESSIN AVEC LES DIFFERNETS ARCS?}

So the elements of $\Omega$ are in fact  exact one-forms that are defined on $\T^d$. Moreover, the union of their graphs is an invariant set that is contained in the compact set $\T^d\times B(0, C_1)$.   Hence, it is contained in the global attractor.

 Let us prove that $\Omega$ contains a unique element.  Assume that $g_1=du_1, g_2=du_2\in\Omega$. Then for every $t\in\R$, $\text{graph}\big(\Phi_t(g_1)\big)$ and $\text{graph}\big(\Phi_t(g_2)\big)$ are two  exact Lipschitz Lagrangian graphs.  %It is proven in  \cite{ArnFej2021} that they have to be exact Lagrangian graphs, because they are contained in the  global attractor. 
 We will denote by $u_j^t$ a primitive of $\Phi_t(g_j)$.  If $g_1\neq g_2$, then $u_1-u_2$ is not a constant and $\Delta=\max(u_1-u_2)-\min(u_1-u_2)$ is positive. Let $q_m, q_M\in \T^d$ be two points such that $(u_1-u_2)(q_m)=\min (u_1-u_2)$ and $(u_1-u_2)(q_M)=\max(u_1-u_2)$. Let $\gamma:[0, 1]\to \T^d$ be the segment such that $\| \dot \gamma\|\leq \sqrt{d}$, $\gamma(0)=q_m$ and $\gamma (1)=q_M$. Let $\eta: [0, 2]\to T^*\T^d$ the loop defined by
 $$\forall s\in [0, 1],\quad  \eta(s)=\Big(\gamma(s), D u_1\big(\gamma(s)\big)\Big)\text{ and }\forall s\in [1, 2], \quad \eta (s)=\Big(\gamma(2-s), D u_2\big(\gamma(2-s)\big)\Big).$$
 Then $\Delta=\int_\eta pdq$ is the integral of the tautological  1-form along $\eta$. As the flow is conformally symplectic, we have that 
 $$\int_{\varphi_t\circ\gamma}pdq=e^{-\alpha t}\Delta\stackbin[t\to-\infty]{\longrightarrow}{}+\infty.$$
 We introduce $(q_t, p_t)=\varphi_t\big(q_m, D u_1(q_m)\big)\in \text{graph}\big(\Phi_t(g_1)\big)\cap\text{graph}\big(\Phi_t(g_2)\big)$ and 
 $$(Q_t, P_t)=\varphi_t\big(q_M, Du_1(q_M)\big)\in \text{graph}\big(\Phi_t(g_1)\big)\cap\text{graph}\big(\Phi_t(g_2)\big).$$
  Then, because the graphs of $Du_j^t$ are exact Lagrangian, 
 $$\int_{\varphi_t\circ\gamma}pdq=u_1^t(Q_t)-u_2(Q_t)-\big(u_1(q_t)-u_2(q_t)\big).$$
 
 %It is also proven that if $\gamma$ denotes the $\gamma$-distance, then $$\gamma(\text{graph}(\Phi_t(g_1)), \text{graph}(\Phi_t(g_2)))=e^{\alpha t}\gamma(\text{graph}(g_1), \text{graph}(g_2)).$$
%But we know that $\gamma(\text{graph}(\Phi_t(g_1)), \text{graph}(\Phi_t(g_2)))=\max (u_2^t-u_1^t)-\min (u_2^t-u_1^t) $. If $g_1\neq g_2$,   this quantity tends to $+\infty$ when $t$ goes to $-\infty$.  
Then for every $r>0$, there exists $t<0$ and $q\in \T^d$ such that $\| Du^t_2(q)-Du^t_1(q)\|\geq r$, which contradicts the fact that $\text{graph}(Du_1^t)\cup \text{graph}(Du_2^t)$ is a subset of the (compact) global attractor.  So finally $\Omega$ contains only one element, whose graph is then invariant.

As the graph of this element is an exact Lagrangian  Lipschitz graph that is invariant by the flow,  by Remark \ref{rkexactLagweakKAM}, this is the graph of  the differential $Du_\alpha$  of the discounted weak KAM solution and then $\Omega=\{ Du_\alpha\}$.

\end{proof}

\begin{cor}\label{CorHopf}  The graph $\text{graph} (Du_\alpha)$ of the differential of the discounted weak KAM solution is equal to the global attractor.
\end{cor}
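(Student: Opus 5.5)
We already know from Proposition \ref{PFinale} that $\text{graph}(Du_\alpha)\subset \mathcal K$. So the corollary reduces to the reverse inclusion $\mathcal K\subset \text{graph}(Du_\alpha)$. We work on the universal cover and use the fiberwise description of $\mathcal K$ coming from Proposition \ref{PboundnonlinearGreen}.

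Fix $x_0=(q_0,p_0)\in\mathcal K$ and write $x_0(t)=\varphi_t(x_0)$. For every $t>0$, the point $x_0$ lies on $\varphi_t\big(\mathcal V(\pi\circ\varphi_{-t}(x_0))\big)$, since $\varphi_{-t}(x_0)$ lies in the fiber $\mathcal V(q_0(-t))$. By Corollary \ref{Corimagvert} this image is the graph of $\partial_2\widetilde{\mathcal A}_t(q_0(-t),\cdot)$. Hence
\[
p_0=\partial_2\widetilde{\mathcal A}_t\big(q_0(-t),q_0\big)\qquad\text{for all } t>0.
\]
Now take $K_0$ a compact convex set containing $q_0$, for instance a closed ball. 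Proposition \ref{PboundnonlinearGreen} gives that for $t>T_{K_0}$ these functions are uniformly bounded by $C_1$ and $C_2$-Lipschitz on $K_0$. By Ascoli, along some sequence $t_k\to+\infty$ the functions $\partial_2\widetilde{\mathcal A}_{t_k}(q_0(-t_k),\cdot)$ converge in the compact-open topology. We get uniform convergence on every compact by exhausting $\R^d$ with balls and extracting diagonally. By definition of $\Omega$ the limit belongs to $\Omega$, and Proposition \ref{PFinale} says $\Omega=\{Du_\alpha\}$. Evaluating at $q_0$ yields $p_0=Du_\alpha(q_0)$, so $x_0\in\text{graph}(Du_\alpha)$.

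This shows $\mathcal K=\text{graph}(Du_\alpha)$ on the lift. Projecting to $T^*\T^d$ gives that the global attractor is the graph of $Du_\alpha$. This graph is Lipschitz because $Du_\alpha$ is $C_2$-Lipschitz as a uniform limit of $C_2$-Lipschitz maps. It is Lagrangian because it is an exact graph, and Lipschitz graphs of differentials are Lagrangian in the Lipschitz sense. Therefore Theorem \ref{ThHopf} follows.

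The only delicate point is the first step: checking that $x_0$ actually lies on the image of the vertical fiber over $q_0(-t)$. This is immediate because every point of $\mathcal K$ has a full backward orbit, so $\varphi_{-t}(x_0)$ is defined for all $t>0$. One then needs uniqueness of the value $p_0$, which holds because this image is a graph, so its point above $q_0$ must be $x_0$. The rest is the compactness already set up before Proposition \ref{PFinale}.
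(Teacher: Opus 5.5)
Your proof is correct and follows the paper's argument: $\text{graph}(Du_\alpha)\subset\mathcal K$ because it is a compact invariant set (as in Proposition \ref{PFinale}), and the reverse inclusion comes from $p_0=\partial_2\widetilde{\mathcal A}_t(q_0(-t),q_0)$ for all $t>0$ combined with $\Omega=\{Du_\alpha\}$. Passing to a subsequence, as you do, is enough; the paper instead speaks of uniform convergence on compact sets of the whole family, which follows from the same uniqueness of the limit point.
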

%{\color{green} OLD VERSION OF THE COROLLARY: There is only one limit point, it is the graph of $du_\alpha$, that is hence the global attractor.
%}
\begin{proof}[Proof of Corollary \ref{CorHopf} ]
 As the graph of $Du_\alpha$ is a compact invariant set, it is included in $\mathcal K$.

 Let us prove the reverse inclusion.  Let $(q_0, p_0)\in \mathcal K$ and let $K$ be a compact subset of $\R^d$ that contains $q_0$. We proved that   $\big(\partial_{2}\widetilde{\mathcal{A}}_t(q_0(-t), \cdot)\big)$ converges uniformly to $du_\alpha$ on $K$. As $q_0\in K$ and $\partial_{2}\widetilde{\mathcal{A}}_t(q_0(-t), q_0)=p_0$, we conclude that $(q_0, p_0)$ belongs to the graph of $du_\alpha$.
\end{proof}
\subsection{Addendum}

We don't need the last proposition to prove the theorem, but it is interesting to know the result.
\begin{prop}\label{PvonvKAMf}
If $(q_0, p_0)\in \mathcal K$, then $\widetilde{\mathcal{A}}_t(\pi\circ \varphi_{-t}(q_0, p_0),\cdot)$ converges to $u_\alpha$.
\end{prop}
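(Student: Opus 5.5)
The plan is to combine the convergence of the derivatives, already established in Proposition \ref{PFinale} and Corollary \ref{CorHopf}, with a separate convergence statement for the values at one point. Fix $(q_0,p_0)\in\mathcal K$ and write $\big(q_0(t),p_0(t)\big)=\varphi_t(q_0,p_0)$. By Proposition \ref{PFinale}, $\Omega=\{Du_\alpha\}$. Together with the relative compactness coming from Proposition \ref{PboundnonlinearGreen} and Ascoli, this shows that the whole family $\partial_2\widetilde{\mathcal A}_t(q_0(-t),\cdot)$ converges to $Du_\alpha$ uniformly on compact sets of $\R^d$ as $t\to+\infty$, not just along subsequences. Integrating from $q_0$, it is then enough to prove that the numbers $\widetilde{\mathcal A}_t(q_0(-t),q_0)$ converge to $u_\alpha(q_0)$.

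These numbers are computed explicitly. Corollary \ref{CorHopf} puts $(q_0,p_0)$ on $\text{graph}(Du_\alpha)$, so the whole orbit $q_0(s)$ stays on that invariant graph. By Proposition \ref{PTonelli}, the unique extremal from $q_0(-t)$ to $q_0$ in time $t$ is $q_0|_{[-t,0]}$ itself. Hence
$$\widetilde{\mathcal A}_t(q_0(-t),q_0)=\int_{-t}^0 e^{\alpha s}L\big(q_0(s),\dot q_0(s)\big)\,ds,$$
as already noted in the proof of Proposition \ref{PFinale}. Because $\dot q_0$ is bounded, this converges to $\int_{-\infty}^0 e^{\alpha s}L\big(q_0(s),\dot q_0(s)\big)\,ds$.

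It remains to identify this limit with $u_\alpha(q_0)$, which is the main step. The curve $q_0(s)$ is a backward orbit of the graph of $Du_\alpha$, hence a calibrated curve for the discounted weak KAM solution. I would check this directly. Along the curve, $p_0(s)=Du_\alpha(q_0(s))$ and $\dot q_0(s)=\partial_pH\big(q_0(s),p_0(s)\big)$, so the Fenchel equality gives
$$L(q_0,\dot q_0)=Du_\alpha(q_0)\dot q_0-H\big(q_0,Du_\alpha(q_0)\big)=Du_\alpha(q_0)\dot q_0+\alpha u_\alpha(q_0).$$
Here $u_\alpha$ is $C^{1,1}$ and the equation holds everywhere since the graph is Lipschitz and invariant (Remark \ref{rkexactLagweakKAM}). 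The relation above says exactly that
$$e^{\alpha s}L\big(q_0(s),\dot q_0(s)\big)=\frac{d}{ds}\Big(e^{\alpha s}u_\alpha\big(q_0(s)\big)\Big).$$
Integrating over $[-t,0]$ gives $u_\alpha(q_0)-e^{-\alpha t}u_\alpha(q_0(-t))$, and the second term tends to $0$ because $u_\alpha$ is bounded on $\T^d$. So the value at $q_0$ converges to $u_\alpha(q_0)$.

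Finally, for $q$ in a compact set $K\ni q_0$, I would write
$$\widetilde{\mathcal A}_t(q_0(-t),q)=\widetilde{\mathcal A}_t(q_0(-t),q_0)+\int_0^1\partial_2\widetilde{\mathcal A}_t\big(q_0(-t),q_0+\sigma(q-q_0)\big)(q-q_0)\,d\sigma.$$
By the first two steps this converges uniformly on $K$ to $u_\alpha(q_0)+\big(u_\alpha(q)-u_\alpha(q_0)\big)=u_\alpha(q)$, which is the claimed convergence in the compact-open topology.

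The delicate point is the upgrade from subsequential to full convergence of the derivatives, together with the uniformity on $K$. Here the bound $C_1$ of Proposition \ref{PboundnonlinearGreen} holds for all $t>T_{K_0}$, and the limit set $\Omega$ is a single point; a relatively compact family with a unique limit point converges, which settles it.
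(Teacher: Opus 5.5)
Your proposal is correct and follows the paper's own argument. The whole family converges because $\Omega=\{Du_\alpha\}$ and the family is relatively compact, so its limit is $u_\alpha$ up to an additive constant, and the value at $q_0$ fixes that constant. You also justify $\lim_{t\to+\infty}\widetilde{\mathcal A}_t(q_0(-t),q_0)=u_\alpha(q_0)$, which the paper asserts without proof, through the calibration identity $e^{\alpha s}L\big(q_0(s),\dot q_0(s)\big)=\frac{d}{ds}\big(e^{\alpha s}u_\alpha(q_0(s))\big)$ along the orbit lying in $\text{graph}(Du_\alpha)$.
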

\begin{proof}[Proof of Proposition \ref{PvonvKAMf}]
A result of the proof of Proposition \ref{PFinale} is that 
 $\big(\widetilde{\mathcal{A}}_t(\pi\circ \varphi_{-t}(q_0, p_0),\cdot)\big)_{t>0}$ converges for the compact open topology when $t$ tends to $+\infty$. Then its limits is $u_\alpha+\mu$ for some constant $\mu$. But we have 
 $$\lim_{t\to+\infty} \widetilde{\mathcal{A}}_t(\pi\circ \varphi_{-t}(q_0, p_0),q_0)=u_\alpha(q_0).$$
 We deduce $\mu=0$.

\end{proof}

\section{ Proof of Theorem \ref{ThintzeroasymptMasInd}}\label{SProofTH2}
The reader is referred to  \cite{Arnold1972, CGIP2003, ArnaudFlorioRoos2022} for an overview of Maslov index.
\subsection{Some facts about Maslov index}
\phantom{fish}
We denote by $\mathbb L$ the Lagrangian Grassmanian of $T^*M$, that is the manifold whose elements are Lagrangian subspaces of $T(T^*M)$. We denote for $x\in T^*M$, $\mathbb L_x=\{ L\in\mathbb L; \ \ \pi(L)=x\}$.

The singular cycle is   the algebraic submanifold of $\mathbb L$ defined by  
$$\Sigma=\Big\{ L\in\mathbb L; \dim \Big(L\cap V\big(\pi(L)\big)\Big)\geq 1\Big\},$$
 i.e. $L\in\mathbb L$ if $L$ is not transverse to the vertical foliation. \\
To each arc $\Gamma: I=[a, b]\to \mathbb L$, such that   its endpoints $\Gamma(a)$, $\Gamma(b)$  are transverse to the vertical\footnote{We won't repeat that every time we will compute the Maslov index, we will assume that it is defined.}, we can associate its Maslov index $\text{MI}(\Gamma)$ that counts the algebraic number of intersections with the asymptotic cycle when following $I$ in the positive sense, and eventually perturbing $\Gamma$ to put it in general position with respect to $\Sigma$. It is known that if  $\Gamma_1, \Gamma_2:[a, b]\to \mathbb L$ are homotopic with fixed endpoints, then they have the same Maslov index.

When $I=[a, +\infty)$ and when this is defined, the {\sl asymptotic Maslov index } of $\Gamma$ is 
$$\text{AMI}(\Gamma)=\lim_{t\to+ \infty} \frac{1}{t}\text{MI}( \Gamma_{| [a, a+t]}).$$

When $\Gamma:\R/T\Z\to\mathbb L$ is a loop, the  Maslov index $\text{MI}(\Gamma)$ counts also the algebraic number of intersections with the asymptotic cycle when following $\R/T\Z$ in the positive sense, and eventually perturbing $\Gamma$ to put it in general position with respect to $\Sigma$.
\begin{prop}\label{LtransverseMaslovIndex}
Let $\Gamma_1, \Gamma_2:I=[a,b]\to\mathbb L$ be two arcs such that $\pi\circ\Gamma_1=\pi\circ\Gamma_2$ and such that at each $t\in I$, $\Gamma_1(t)$ and $\Gamma_2(t)$ are transverse. Then 
$$\vert {\mathrm{MI}}(\Gamma_1)-\mathrm{MI}(\Gamma_2)\vert \leq 2(d+1).$$
\end{prop}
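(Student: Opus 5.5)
The plan is to reduce to a statement about Maslov indices of arcs with a common transverse partner, and then use homotopy invariance with fixed endpoints.

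First, I trivialize the symplectic bundle along the base curve $c=\pi\circ\Gamma_1=\pi\circ\Gamma_2:[a,b]\to T^*M$. Pulling back $T(T^*M)$ and its vertical subbundle $V$ along $c$ gives a symplectic bundle over an interval, so it admits a symplectic trivialization sending $V(c(t))$ to a fixed Lagrangian $V_0\subset\R^{2d}$. After this, $\Gamma_1,\Gamma_2$ are arcs in the fixed Grassmannian $\Lambda(d)$ with $\Gamma_1(t)\pitchfork\Gamma_2(t)$ for all $t$. The Maslov index with respect to $\Sigma$ becomes the usual Maslov index relative to $\Sigma(V_0)=\{L: L\cap V_0\neq 0\}$. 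Because the trivialization is compatible with $V$, it preserves these indices.

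The key step is to use that the pair space $\{(L_1,L_2)\in\Lambda(d)^2: L_1\pitchfork L_2\}$ is the homogeneous space $Sp(2d)/GL(d)$. Any path of transverse pairs therefore lifts to a path of symplectic matrices $A_t$ with $A_t(L_1^0,L_2^0)=(\Gamma_1(t),\Gamma_2(t))$ for a fixed transverse pair, for instance the horizontal $H_0$ and $V_0$. So $\Gamma_i(t)=A_tL_i^0$ with $A_t\in Sp(2d)$ continuous.

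Now I compare $\mathrm{MI}(A_tL_1^0)$ and $\mathrm{MI}(A_tL_2^0)$. I would use the Robbin–Salamon properties of the index $\mu(\Lambda,V_0)$ for arbitrary arcs. A change of the fixed reference Lagrangian, or a change of the moving one for the same symplectic path, changes the index only by the Hörmander index $s(V_0,V_0';L,L')$ evaluated at the endpoints. Each such endpoint term is bounded by $d$. Equivalently, I can view the quantity as the Maslov index of the closed loop in $\Lambda(d)\times\Lambda(d)$ relative to the graph-type cycle, plus endpoint corrections. A cleaner route is to write
$$\mathrm{MI}(A_tL)=\mu(A_tL,V_0)=\mu(L,A_t^{-1}V_0),$$
so that both indices are indices of the \emph{same} path $A_t^{-1}V_0$ against two different fixed Lagrangians $L_1^0,L_2^0$. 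Two fixed transverse Lagrangians give indices that differ by at most the Hörmander index of the endpoint quadruple, which has absolute value at most $d$ at each end. Summing the contributions from the two endpoints and adding the half-integer/endpoint convention corrections, at most $1$ each, gives the bound $2(d+1)$.

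The main obstacle is matching the paper's counting convention, which perturbs into general position with transverse endpoints, with the Robbin–Salamon index, and bounding the Hörmander index cleanly. For the Hörmander bound I would first try the formula $s=\tfrac12(\mathrm{sign}\,Q_1-\mathrm{sign}\,Q_2)$ with quadratic forms on $d$-dimensional spaces, which gives $|s|\le d$ at each endpoint. The conversion between the two index conventions then costs at most $1$ per endpoint, and this yields $2(d+1)$.
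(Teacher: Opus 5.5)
Your argument is correct. It shares its first step with the paper: your lift $A_t$ is exactly the paper's continuous symplectic frame $(e_i^t,f_i^t)$ with $e_i^t\in\Gamma_1(t)$ and $f_i^t\in\Gamma_2(t)$. After that the two routes diverge.

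The paper builds an explicit homotopy with fixed endpoints. It replaces $e_i^t$ by $\cos(\eta(t)\theta)e_i^t+\sin(\eta(t)\theta)f_i^t$, with a cutoff $\eta$, which deforms $\Gamma_1$ into a path equal to $\Gamma_2$ away from two short end intervals. On those intervals everything is a graph over the vertical relative to an auxiliary Lagrangian $H(t)$ transverse to $\Gamma_1,\Gamma_2,V$. Each end piece therefore contributes a change of index of a $d\times d$ symmetric matrix, which gives the crude constant $2(d+1)$.

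You instead transfer the time dependence onto the reference: $\mathrm{MI}(\Gamma_i)=\mu(A_tL_i^0,V_0)=\mu(L_i^0,A_t^{-1}V_0)$. This is naturality of the two-path index under a moving symplectic frame. It holds because at a crossing $\dot A_t$ adds the same quadratic form to both crossing forms on the intersection, so it cancels. The difference of the two indices is then a H\"ormander index of the single path $P=A_t^{-1}V_0$ against the fixed transverse pair $L_1^0,L_2^0$.

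Your final accounting is padded, however. Since $\mathrm{MI}$ is only defined when the endpoints are transverse to the vertical, $P(a)$ and $P(b)$ are transverse to both $L_1^0$ and $L_2^0$. All indices are then integers agreeing (up to a global sign) with the crossing count, so there are no endpoint corrections. Also, the H\"ormander index is one quantity, not a sum of per-endpoint terms. You can compute it on the straight path of graphs $(1-t)S_a+tS_b$ over $L_1^0$, in coordinates where $L_2^0$ is vertical. This is legitimate because the difference of indices against two fixed Lagrangians depends only on the endpoints. The result is $\tfrac12\vert\mathrm{sign}\,S_b-\mathrm{sign}\,S_a\vert\le d$.

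So your route gives the sharper bound $\vert\mathrm{MI}(\Gamma_1)-\mathrm{MI}(\Gamma_2)\vert\le d$, at the price of invoking Robbin--Salamon-type properties. The paper's argument is more hands-on, using only homotopy invariance and a local chart.
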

\begin{proof}[Proof of Proposition \ref{LtransverseMaslovIndex}]  We denote $\gamma(t)=\pi\circ\Gamma_1(t)=\pi\circ\Gamma_2(t)$. Thanks to the hypothesis of transversality, we can  choose continuously a symplectic basis $(e_1^t, \dots, e_d^t, f_1^t, \dots, f_d^t)$ of $T_{\gamma(t)}(T^*M)$ such that $(e_1^t, \dots, e_d^t)\in \Gamma_1(t)$ and $( f_1^t, \dots, f_d^t)\in \Gamma_2(t)$. Let $\varepsilon>0$, let $\eta:\R\to [0, 1]$ be a smooth function equal to $0$ on $(-\infty, a+\varepsilon]\cup [b-\varepsilon, +\infty)$ and equal to 1 on $[a+2\varepsilon, b-2\varepsilon]$. For $\theta\in [0, \frac{\pi}{2}]$, we define 
$g_i^t(\theta)=\cos (\eta(t)\theta) e_i^t+\sin(\eta(t)\theta) f_i^t$ and $\Gamma(\theta, t)=\text{Vect}\{ g_1^t, \dots, g_d^t\}$. Then $\big(\Gamma(\theta, t)\big)_{\theta\in [0, \frac{\pi}{2}]}$ is an homotopy with fixed endpoints of elements of $\mathbb L$ such that 
\begin{itemize}
\item 
$\forall t\in [a, b], \quad \Gamma(0, t)=\Gamma_1(t)$;
\item $\forall t\in [a, a+\varepsilon]\cup [b-\varepsilon, b], \  \forall \theta\in[0; \frac{\pi}{2}], \quad \Gamma(\theta, t)=\Gamma_1(t)$:
\item $\forall t\in [a+2\varepsilon, b-2\varepsilon], \quad  \Gamma(\frac{\pi}{2}, t)=\Gamma_2(t)$.
\end{itemize} 
We can choose $\varepsilon>0$ small enough in such a way there exists an Lagrangian subspace $H(t)$ of $T_{\gamma(t)}(T^*M)$ that continuously depends on $t\in [a, a+2\varepsilon]\cup [b-2\varepsilon, b]$ such that $H(t)$ is transverse to $\Gamma_1(t)$, $\Gamma_2(t)$ and the vertical $V\big(\gamma(t)\big)$ at $\gamma(t)$.  Hence for $t\in [a, a+2\varepsilon]\cup [b-2\varepsilon, b]$, there exists a symplectic basis $(E^t_1, \dots, E^t_d, F^t_1, \dots, F^t_d)$ of $T_{\gamma(t)}(T^*M)$ such that $E^t_1, \dots , E^t_d\in V\big(\gamma(t)\big)$, $F^t_1, \dots, F^t_d\in H(t)$ and an operator $S_i^t:V(t)\to H(t)$ whose matrix in the bases $(E^t, F^t)$ is symmetric such that 
$$\forall i\in \{ 1, \dots, d\}, \quad e_i^t=E_i^t+S_1^t(E_i^t)\text{ and } f_i^t=E_i^t+S_2^t(E_i^t).$$
Then for  $t\in [a, a+2\varepsilon]\cup [b-2\varepsilon, b]$ and $\theta\in [0, \frac{\pi}{2}]$, we have
$$\forall i\in \{ 1, \dots, d\}, \quad g_i^t(\theta) =\frac{1}{\sqrt{2}}\big(\sin (\eta(t)\theta+\frac{\pi}{4})\big)E_i+ \big(\cos(\eta(t)\theta)S^t_1+\sin(\eta(t)\theta)S^t_2\big)(E_i)$$
where $\frac{1}{\sqrt{2}}\big(\sin (\eta(t)\theta+\frac{\pi}{4})\big)\neq 0$ and $\big(\cos(\eta(t)\theta)S_1+\sin(\eta(t)\theta)S_2\big)(E_i)\in H(t)$.This implies that $\Gamma(\theta, t)$ is the graph of a symmetric operator, namely 
$$S(\theta, t)= \frac{\sqrt{2}}{\sin (\eta(t)\theta+\frac{\pi}{4})}\big(\cos(\eta(t)\theta)S^t_1+\sin(\eta(t)\theta)S^t_2\big)$$
from $V\big(\gamma(t)\big)$ to $H(t)$. Hence for every $\theta\in [0, \frac{\pi}{2}]$, the Maslov index of $\Gamma(\theta,\cdot):[a, a+2\varepsilon]\to \mathbb L$ (resp. $\Gamma(\theta,\cdot):[b-2\varepsilon,b]\to \mathbb L$) corresponds to the variation of the index of $S(\theta, \cdot )$ on $[a, a+2\varepsilon]$ (resp. on $[b-2\varepsilon, b]$). As the matrix has for size $d$, such a variation is between $-(d+1)$ and $d+1$.  

To conclude, as $\Gamma_1$ and $\Gamma(\frac{\pi}{2}, \cdot)$ are homotopic with fixed endpoints, they have the same Maslov index. As $\Gamma(\frac{\pi}{2}, \cdot)_{|[a, a+2\varepsilon]\cup [b-2\varepsilon, b]}=\Gamma_{2|[a, a+2\varepsilon]\cup [b-2\varepsilon, b]}$, we deduce that 
$$\text{MI}(\Gamma_1)-\text{MI}(\Gamma_2)=\text{MI}\big(\Gamma\big(\frac{\pi}{2}, \cdot\big)_{|[a, a+2\varepsilon]\cup [b-2\varepsilon, b]}\big)-\text{MI}(\Gamma_{2|[a, a+2\varepsilon]\cup [b-2\varepsilon, b]})\in [-2(d+1), 2(d+1)].$$

\end{proof}
\begin{cor}\label{CflowMI}
Let $(\varphi_t)$ be a conformally symplectic flow on $T^*M$ and let $x\in T^*M$. Let $L_1, L_2\in \mathbb L_x$. Then, for every $[a, b]\subset \R$, we have
$$\big\vert \mathrm{MI}(D\varphi_sL_1)_{s\in [a, b]}- \mathrm{MI}(D\varphi_sL_2)_{s\in [a, b]} \big\vert\leq 4(d+1).$$
\end{cor}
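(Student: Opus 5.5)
The plan is to deduce the corollary from Proposition \ref{LtransverseMaslovIndex} by inserting an auxiliary Lagrangian subspace $L_3\in\mathbb L_x$ that is transverse to both $L_1$ and $L_2$. The constant $4(d+1)$ is exactly twice $2(d+1)$, which points to a two-step triangle inequality. Throughout, we assume, as the paper does, that the Maslov indices are defined, i.e. that the endpoints are transverse to the vertical. For the auxiliary arc we also choose $L_3$ so that $D\varphi_aL_3$ and $D\varphi_bL_3$ are transverse to the vertical.

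\textbf{Step 1: choosing $L_3$.} In $\mathbb L_x$, which is diffeomorphic to $U(d)/O(d)$, the set of Lagrangian subspaces transverse to a fixed $L$ is open and dense. Its complement is the Maslov cycle of $L$, a proper algebraic subset of positive codimension. The same is true for the set of $L_3$ such that $D\varphi_aL_3$ is transverse to $V(\varphi_a(x))$, since $D\varphi_a(x)$ is a linear isomorphism preserving the Lagrangian property. The same holds at time $b$. A finite intersection of open dense sets is nonempty, so such an $L_3$ exists.

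\textbf{Step 2: transversality is preserved along the flow.} The map $D\varphi_s(x)$ is a linear isomorphism with $\varphi_s^*\omega=e^{-\alpha s}\omega$. It therefore sends Lagrangian subspaces to Lagrangian subspaces, and it preserves transversality: $D\varphi_sL_i\cap D\varphi_sL_3=D\varphi_s(L_i\cap L_3)=\{0\}$. Consequently, for each $i\in\{1,2\}$, the arcs $\Gamma_i(s)=D\varphi_sL_i$ and $\Gamma_3(s)=D\varphi_sL_3$ on $[a,b]$ have the same projection $s\mapsto\varphi_s(x)$ and are pointwise transverse.

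\textbf{Step 3: conclusion.} Proposition \ref{LtransverseMaslovIndex} gives $|\mathrm{MI}(\Gamma_i)-\mathrm{MI}(\Gamma_3)|\le 2(d+1)$ for $i=1,2$. The triangle inequality then yields the bound $4(d+1)$.

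The only delicate point is the genericity in Step 1: $L_3$ must simultaneously avoid the Maslov cycles of $L_1$ and $L_2$ and the preimages of the vertical cycles at the two endpoints. Each of these is a closed set with empty interior, so this is a routine dimension count rather than a genuine obstacle.
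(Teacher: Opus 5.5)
Your proof is correct and is essentially the paper's argument: choose $L_3\in\mathbb L_x$ transverse to $L_1$ and $L_2$, use that $D\varphi_t$ preserves transversality, apply Proposition~\ref{LtransverseMaslovIndex} to each pair, and conclude with the triangle inequality. Your extra genericity requirement, that $D\varphi_aL_3$ and $D\varphi_bL_3$ be transverse to the vertical so that the Maslov index of the auxiliary arc is defined, fills in a detail the paper leaves implicit under its standing convention that all Maslov indices considered are defined.
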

\begin{proof}[Proof of Corollary \ref{CflowMI}] Let $L_3\in\mathbb L_x$ that is transverse to $L_1$ and $L_2$. Then for every $t\in[a, b]$, $D\varphi_tL_3$ is transverse to $D\varphi_tL_2$ and $D\varphi_tL_1$. Then  by Proposition \ref{LtransverseMaslovIndex}, we have for every $j\in \{ 1, 2\}$
$$\big\vert \text{MI}(D\varphi_sL_j)_{s\in [a, b]}-\text{MI}(D\varphi_sL_3)_{s\in[a, b]}\big\vert\leq 2(d+1);$$
and then
$$\big\vert \text{MI}(D\varphi_sL_1)_{s\in [a, b]}-\text{MI}(D\varphi_sL_2)_{s\in[a, b]} \big \vert\leq 4(d+1).$$

\end{proof}
What will particularly be interesting for us is the so called {\sl dynamical Maslov index}.
\begin{cor}\label{CDynMasInd}
Let $(\varphi_t)$ be a conformally symplectic flow on $T^*M$ and let $x\in T^*M$. 
Assume that for some $L\in \mathbb L$ such that $\pi(L)=x$ the asymptotic Maslov index $\mathrm{AMI}\big(D\varphi_t(L)\big)_{t\in [0, +\infty]}$ is defined. Then it doesn't depend on the  choice $L\in \mathbb L$ such that $\pi(L)=x$, is called the {\rm dynamical Maslov index} of $x$ and denoted by $\mathrm{DMI}(x)$.
\end{cor}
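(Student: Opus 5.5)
The plan is to derive the statement directly from Corollary \ref{CflowMI}, which bounds uniformly, independently of the length of the time interval, the difference of Maslov indices of the images of two Lagrangian subspaces over the same point. Fix $x\in T^*M$ and suppose that for some $L\in\mathbb L_x$ the limit
$$\mathrm{AMI}\big(D\varphi_t(L)\big)_{t\in[0,+\infty)}=\lim_{t\to+\infty}\frac1t\,\mathrm{MI}\big(D\varphi_sL\big)_{s\in[0,t]}$$
exists. Let $L'\in\mathbb L_x$ be any other Lagrangian subspace over $x$. The goal is to show that the corresponding limit for $L'$ exists and takes the same value.

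First I will handle the endpoint issue. The Maslov index of an arc is only defined when its endpoints are transverse to the vertical. So I will work with those $t$ for which both $D\varphi_tL$ and $D\varphi_tL'$ are transverse to $V(\varphi_t(x))$, and also require that $L$ and $L'$ themselves are transverse to $V(x)$.

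For a general $L'$, I will either use the convention that the index is computed after a small perturbation of the endpoints, or start the arc at a small time $\delta>0$. Changing the starting point, or perturbing the endpoints within a small neighbourhood, changes $\mathrm{MI}$ by at most a constant depending only on $d$, by the same local argument used in Proposition \ref{LtransverseMaslovIndex}. For a fixed flow line, the set of bad times is discrete, or can be avoided by such a perturbation. Neither step affects the quantity $\frac1t\mathrm{MI}$ in the limit.

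With this in place, Corollary \ref{CflowMI} applied on $[0,t]$ gives
$$\Big|\tfrac1t\,\mathrm{MI}\big(D\varphi_sL\big)_{s\in[0,t]}-\tfrac1t\,\mathrm{MI}\big(D\varphi_sL'\big)_{s\in[0,t]}\Big|\le \frac{4(d+1)}{t}\xrightarrow[t\to+\infty]{}0 .$$
Hence the limit for $L'$ exists and coincides with the limit for $L$. The asymptotic Maslov index therefore depends only on $x$, which justifies the definition of $\mathrm{DMI}(x)$.

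The only real obstacle is the bookkeeping at the endpoints described above: making sure the limit is taken along times where the indices are defined, and that restricting to such times, or perturbing there, only costs a bounded error. Once that is settled, the uniform bound $4(d+1)$ from Corollary \ref{CflowMI} does all the work.
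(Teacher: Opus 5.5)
Your proof is correct and is essentially the paper's argument: apply Corollary \ref{CflowMI} on $[0,t]$, divide by $t$, and let $t\to+\infty$; the paper handles the endpoint issue by its standing convention that the Maslov indices involved are defined. One caveat in your bookkeeping: the set of times at which $D\varphi_tL'$ meets the vertical need not be discrete for a general conformally symplectic flow (the Liouville flow, for instance, preserves the vertical), so it is your alternative route, perturbing endpoints at a cost bounded in terms of $d$ only, that actually handles the endpoints.
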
 
\begin{proof}[Proof of Corollary \ref{CDynMasInd}] Let us consider $L_1, L_2\in\mathbb L_x$. 
 Then we know by Corollary  \ref{CflowMI} that for every $t>0$, we have 
$$\big\vert \text{MI}(D\varphi_sL_1)_{s\in [0, t]}-\text{MI}(D\varphi_sL_2)_{s\in[0, t]}\big \vert\leq 4(d+1).$$
Hence
$$\lim_{t\to+\infty}\Big(\frac{1}{t}\text{MI}(D\varphi_sL_1)_{s\in [0, t]}-\frac{1}{t}\text{MI}(D\varphi_sL_2)_{s\in [0, t]}\Big)=0.$$
This implies that if the dynamical Maslov index exists for $L_1\in \mathbb L_x$, then it exists and it is the same for every $L_2\in\mathbb L_x$. \end{proof}
\subsection{The Maslov index along curves of $T\mathcal L$ where $\mathcal L$ is   a $C^1$ Lagrangian submanifold of $T^*M$ that is symplectically  isotopic to the zero section
}\phantom{fish}
When $\mathcal L$ is a $C^1$ Lagrangian submanifold of $T^*M$ that is symplectically  isotopic to the zero section, the Maslov index of every loop $\Gamma: \R/T\Z\to \mathbb L$ such that $\Gamma(t)\subset T\mathcal L$ is zero {as it is homotopic to a   loop contained in the tangent bundle of the zero section. We deduce
\begin{prop}\label{PboundedMI} Let $\mathcal L$ be a symplectically isotopic to the zero section $C^1$ Lagrangian submanifold of $T^*M$. Then there exists a constant $C>0$ such that
$$\forall \Gamma:[a, b]\to\mathbb L, \forall t\in [a, b], \quad \Gamma(t)\in T\mathcal L\Longrightarrow \mathrm{MI}(\Gamma)\in [-C, C].$$
\end{prop}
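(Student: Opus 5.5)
The idea is to close any arc in $T\mathcal L$ into a loop inside $T\mathcal L$, use that loops in $T\mathcal L$ have zero Maslov index, and bound the contribution of the closing piece uniformly by compactness.

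First we fix a reference Lagrangian field on $\mathcal L$. Since $\mathcal L$ is a $C^1$ Lagrangian submanifold, $x\mapsto T_x\mathcal L$ is a continuous section of $\mathbb L$ over $\mathcal L$. For $x\in\mathcal L$ we set $\mathbb L(\mathcal L)_x=\{L\in\mathbb L_x;\ L=T_x\mathcal L\}$. The important point is that the hypothesis $\Gamma(t)\in T\mathcal L$ means $\Gamma(t)=T_{\gamma(t)}\mathcal L$, where $\gamma=\pi\circ\Gamma$ is a path in $\mathcal L$. So $\Gamma$ is simply the tangent-plane field along a path of the compact manifold $\mathcal L$.

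Next we close the arc. Since $\mathcal L$ is compact and connected (it is isotopic to the zero section and $M$ is closed), there is $D>0$ such that any two points of $\mathcal L$ can be joined by a $C^1$ path $\beta$ in $\mathcal L$ of length at most $D$. We join $\gamma(b)$ back to $\gamma(a)$ by such a $\beta$ and concatenate $\Gamma$ with $s\mapsto T_{\beta(s)}\mathcal L$. This gives a loop of tangent planes to $\mathcal L$, whose Maslov index is $0$ by the fact recalled just before the statement (it is homotopic to a loop in the tangent bundle of the zero section). Additivity of the Maslov index under concatenation then gives $\mathrm{MI}(\Gamma)=-\mathrm{MI}\big((T_{\beta(s)}\mathcal L)_s\big)$, up to a bounded correction coming from the endpoint conventions. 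Those endpoints may lie in $\Sigma$; we handle this with the same local-chart trick as in Proposition \ref{LtransverseMaslovIndex}, where near an endpoint the index variation is bounded by $d+1$.

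It remains to bound uniformly the Maslov index of the closing arcs $(T_{\beta(s)}\mathcal L)_s$. This is the main obstacle, since these arcs can cross $\Sigma$ non-generically. We would cover the compact $\mathcal L$ by finitely many small open sets $U_1,\dots,U_N$ such that on each $U_i$ there is a continuous Lagrangian field $H_i$ transverse to all $T_x\mathcal L$ and to the vertical $V(x)$ for $x\in U_i$. This is possible because the set of Lagrangian subspaces transverse to two given ones is open and dense, and everything is continuous. On $U_i$, each $T_x\mathcal L$ is the graph of a symmetric matrix from $V$ to $H_i$, as in the proof of Proposition \ref{LtransverseMaslovIndex}. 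Hence the Maslov index of any sub-arc staying in $U_i$ equals a variation of matrix index, which lies in $[-(d+1),d+1]$.

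Finally, we choose the paths $\beta$ so that each meets at most $K$ of the sets $U_i$ consecutively; for instance, we take $\beta$ to be a concatenation of at most $K$ pieces, each contained in one $U_i$, with $K$ depending only on the cover, by a Lebesgue number argument. Summing over the pieces gives $|\mathrm{MI}(\beta\text{-arc})|\le K(d+1)$. Adding the endpoint corrections yields a constant $C$ depending only on $\mathcal L$ and $d$, which proves the proposition.
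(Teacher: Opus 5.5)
Your proposal is correct and is essentially the paper's argument. You close the arc into a loop of tangent planes to $\mathcal L$, which has Maslov index $0$, and you control the closing piece by the local bound $d+1$ coming from a Lagrangian field transverse to both $T\mathcal L$ and the vertical, as in Lemma \ref{LlocalMI}. The only difference is that you run the compactness step directly (finite cover, chains of at most $K$ charts, giving an explicit constant), whereas the paper argues by contradiction: it extracts convergent endpoints $x_n\to x$, $y_n\to y$ and closes the loop through the neighborhoods $V_x$, $V_y$ and one fixed arc from $y$ to $x$.
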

When $\gamma: I\to \mathcal L$, the notation $\widetilde\gamma^{\mathcal L}:I\to \mathbb L$ is for $\widetilde\gamma^{\mathcal L}(t)= T_{\gamma(t)}\mathcal L$.
\begin{proof}[Proof of Proposition \ref{PboundedMI}] Let us assume that there exists a sequence of arcs $\gamma_n:[0, T_n]\to \mathcal L$ such that the Maslov index of $\widetilde \gamma_n^{\mathcal L}:[0, T_n]\to \mathbb L$, $t\mapsto T_{\gamma_n(t)}\mathcal L$ tends to $+\infty$ when $n$ goes to $\infty$. Up to a subsequence, we can assume that $(x_n)_n=\big(\gamma_n(0)\big)_n$ tends to some $x\in\mathcal L$ and $(y_n)_n=\big(\gamma_n(T_n)\big)_n$ tends to $y\in \mathcal L$.
\begin{lemma}\label{LlocalMI}
For every $z\in\mathcal L$, there exists a connected neighborhood $V_z$ of $z$ in $\mathcal L$ such that, for every $z_1, z_2\in V_z$, and every  $\gamma:[0, 1]\to V_z$ such that $\gamma(0)=z_1$, $\gamma(1)=z_2$ then $\mathrm{MI}(\widetilde\gamma^{\mathcal L})\in [-(d+1), d+1]$.
\end{lemma}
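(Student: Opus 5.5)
The plan is to work locally near $z$, where the vertical distribution and the tangent planes of $\mathcal L$ can both be compared with one fixed Lagrangian subspace. Then the Maslov index of $\widetilde\gamma^{\mathcal L}$ becomes the change of index of a continuous family of symmetric matrices.

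\textbf{Choice of a transverse field.} Fix a chart of $T^*M$ around $z$ given by canonical coordinates $(q,p)$, so that the vertical $V(x)$ is identified with $\{0\}\times\R^d$ for $x$ near $z$. Choose a Lagrangian subspace $H\subset\R^{2d}$ that is transverse both to the vertical $\{0\}\times\R^d$ and to $T_z\mathcal L$. A generic Lagrangian subspace works, since the set of Lagrangian subspaces transverse to a given one is open and dense. Transversality is an open condition and $x\mapsto T_x\mathcal L$ is continuous because $\mathcal L$ is $C^1$. Hence there is a connected neighborhood $V_z$ of $z$ in $\mathcal L$ (for instance a small ball in a chart of $\mathcal L$) such that $H$ is transverse to $T_x\mathcal L$ for every $x\in V_z$.

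\textbf{Reduction to symmetric matrices.} With the splitting $\R^{2d}=V\oplus H$, every Lagrangian subspace transverse to $H$ is the graph of a symmetric operator from $V$ to $H$. This is the same device used in the proof of Proposition \ref{LtransverseMaslovIndex}, with a basis $(E_i)$ of $V$ and $(F_i)$ of $H$ forming a symplectic basis. So for $\gamma:[0,1]\to V_z$ we can write $T_{\gamma(t)}\mathcal L=\mathrm{graph}\,S(t)$, where $t\mapsto S(t)$ is a continuous path of symmetric $d\times d$ matrices. Moreover $T_{\gamma(t)}\mathcal L$ meets the vertical nontrivially exactly when $S(t)$ is degenerate. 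As in the proof of Proposition \ref{LtransverseMaslovIndex}, the Maslov index of $\widetilde\gamma^{\mathcal L}$ equals the variation of the index (number of negative eigenvalues) of $S(t)$ between $t=0$ and $t=1$, possibly up to a sign convention. The whole path stays in the chart $\{L:\ L\pitchfork H\}$, which is contractible, so only the endpoints matter. Since this index lies in $\{0,\dots,d\}$, the variation lies in $[-d,d]\subset[-(d+1),d+1]$.

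\textbf{Main obstacle.} The delicate point is justifying the identification of the Maslov index with the index variation when the endpoints are only assumed transverse to the vertical, using the paper's convention of perturbing into general position. I would argue as follows. The set of Lagrangian planes transverse to $H$ is an affine chart diffeomorphic to the space of symmetric matrices, hence contractible. Any two paths in it with the same endpoints are therefore homotopic with fixed endpoints, so they have the same Maslov index. It then suffices to compute the index on the straight path $(1-s)S(0)+sS(1)$ after a small generic perturbation keeping the endpoints fixed. Along such a path, each crossing of $\Sigma$ changes the index by exactly the signed multiplicity, and the contributions sum to the total change of index. This fixes the constant $d+1$ (indeed $d$) and proves Lemma \ref{LlocalMI}.
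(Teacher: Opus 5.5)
Your proposal is correct and is essentially the paper's argument. The paper likewise chooses, near $z$, a Lagrangian foliation transverse both to the vertical and to $T\mathcal L$, writes $\mathcal L$ locally as a graph $\{(X,DU(X))\}$ in coordinates that straighten both foliations, and reads the Maslov index as the change of index of $D^2U$. Your constant transverse subspace $H$ in canonical coordinates is the simplest instance of such a foliation. Your contractibility argument makes explicit the identification of the Maslov index with the index variation, a step the paper leaves implicit, and it gives the slightly sharper bound $[-d,d]$.
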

\begin{proof}[Proof of Lemma \ref{LlocalMI}] We can choose a small neighborhood $W_z$ of $z$   and a Lagrangian foliation $\mathcal F$ in $W_z$ that is transverse to $T\mathcal L$ on $V_z=W_z\cap \mathcal L$  and transverse to the vertical foliation on $W_z$. We can then choose local symplectic coordinates $(X, Y)\in I\times U_z\times V_z$ in $W_z$ such that the vertical leaves have equation $Y=Y_0$, the leaves of $\mathcal F$ have for equation $X=X_0$ and $\mathcal L\cap W_z$ is a graph $\big\{ \big(X, DU(X)\big);\ \  X\in U_z\big\}$. Hence if $\gamma:[0, 1]\to V_z$ is an arc, the Maslov index along $\widetilde \gamma^{\mathcal L}$ counts the change of signature of $D^2U$ along $\pi\circ\gamma$, that is then in $[-(d+1), d+1]$.
\end{proof}
Let us choose $N\in\N$ large enough such that
$$\forall n\geq N, \quad x_n\in V_x\text{ and } y_n\in V_y.$$
By Lemma \ref{LlocalMI}, there exist $\alpha_n:[0, 1]\to V_x$ and $\beta_n:[0, 1]\to V_y$ such that 
\begin{itemize}
\item $\alpha_n(0)=x$, $\alpha_n(1)=x_n$, $\beta_n(0)=y_n$ and $\beta_n(1)=y$;
\item $\text{MI}(\widetilde\alpha_n^{\mathcal L})\in [-(d+1), d+1]$ and $\text{MI}(\widetilde\beta_n^{\mathcal L})\in [-(d+1), d+1]$.
\end{itemize}
Let now $\gamma:[0, 1]\to \mathcal L$ be an arc such that $\gamma(0)=y$ and $\gamma(1)=x$. For every $n\geq N$, we define $\eta_n= \alpha_n\cup\gamma_n\cup \beta_n\cup\gamma$. Then $\eta_n$ is a loop of $\mathcal L$ such that
$$\text{MI}(\widetilde \eta^{\mathcal L}_n)=\text{MI}(\widetilde \alpha_n^{\mathcal L})+\text{MI}(\widetilde \gamma_n^{\mathcal L})+\text{MI}(\widetilde \beta_n^{\mathcal L})+\text{MI}(\widetilde \gamma^{\mathcal L}).$$
As $\text{MI}(\widetilde \alpha_n^{\mathcal L})\in [-(d+1), d+1)]$, $\text{MI}(\widetilde \beta_n^{\mathcal L})\in [-(d+1), d+1]$, $\text{MI}(\widetilde \gamma^{\mathcal L})$ is a fixed number and $\lim\limits_{n\to \infty} \text{MI}(\widetilde \gamma^{\mathcal L})=+\infty$, the sequence of loops $(\widetilde \eta_n^{\mathcal L})$ has its Maslov index that tends to $+\infty$ when $n$ goes to $\infty$. But this contradicts the fact that the Maslov index of every loop on $\mathcal L$ is zero.

\end{proof}
 \begin{cor}\label{CboundedMI}
  Let $\mathcal L$ be a symplectically isotopic to the zero section, $C^1$, Lagrangian submanifold of $T^*M$ that is invariant by a conformally symplectic flow $(\varphi_t)_t$. Then there exists a constant $C\in \R$ such that for every $x\in \mathcal L$, for every $L\in \mathbb L_x$ and every $[a, b]\subset \R$, we have 
  $$\mathrm{MI}\big(D\varphi_t(L)\big)_{t\in [a, b]}\in [-C, C].$$
 \end{cor}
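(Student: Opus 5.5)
The plan is to reduce the statement to Proposition \ref{PboundedMI} by comparing, via Corollary \ref{CflowMI}, the evolution of an arbitrary Lagrangian subspace $L\in\mathbb L_x$ with the evolution of the tangent space $T_x\mathcal L$. Let $C_0>0$ be the constant given by Proposition \ref{PboundedMI} for the submanifold $\mathcal L$. The candidate constant is $C=C_0+4(d+1)$.

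First, fix $x\in\mathcal L$ and $[a,b]\subset\R$. Since $\mathcal L$ is invariant by $(\varphi_t)$ and is a $C^1$ submanifold, the orbit $\varphi_t(x)$ stays in $\mathcal L$. Differentiating the invariance relation $\varphi_t(\mathcal L)=\mathcal L$ gives $D\varphi_t(x)T_x\mathcal L=T_{\varphi_t(x)}\mathcal L$, using that $D\varphi_t(x)$ is an isomorphism and that the two spaces have the same dimension. Hence, with $\gamma(t)=\varphi_t(x)$, the arc $t\mapsto D\varphi_t(T_x\mathcal L)$, $t\in[a,b]$, coincides with $\widetilde\gamma^{\mathcal L}$. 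Every value of this arc lies in $T\mathcal L$, so Proposition \ref{PboundedMI} gives
\[
\mathrm{MI}\big(D\varphi_t(T_x\mathcal L)\big)_{t\in[a,b]}\in[-C_0,C_0].
\]
If $[a,b]$ contains negative times, we use that the orbit of $x$ is defined there. This holds because the flow restricted to the invariant compact set $\mathcal L$ is complete, or, for the attractor case, because $\mathcal L$ is invariant in both time directions.

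Second, apply Corollary \ref{CflowMI} with $L_1=L$ and $L_2=T_x\mathcal L$, both in $\mathbb L_x$. It yields
\[
\big|\mathrm{MI}(D\varphi_tL)_{t\in[a,b]}-\mathrm{MI}(D\varphi_tT_x\mathcal L)_{t\in[a,b]}\big|\le 4(d+1).
\]
Combining this with the bound from the first step gives $\mathrm{MI}(D\varphi_tL)_{t\in[a,b]}\in[-C,C]$ with $C=C_0+4(d+1)$. This constant depends neither on $x$, nor on $L$, nor on $[a,b]$.

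The only delicate point is the convention that the Maslov index is defined only when the endpoints are transverse to the vertical. Corollary \ref{CflowMI} and Proposition \ref{PboundedMI} are stated under that same tacit convention. If $T_{\varphi_a(x)}\mathcal L$ or $T_{\varphi_b(x)}\mathcal L$ is not transverse, I would handle it as follows. Interpret the index via a generic small perturbation of the endpoints, as in the definition; this changes the index by at most $d+1$ at each end. Alternatively, rely on the fact that the proof of Proposition \ref{LtransverseMaslovIndex} already absorbs such endpoint effects into its constant. In either case, at worst the constant $C$ must be enlarged by a fixed multiple of $d+1$, which does not affect the conclusion.
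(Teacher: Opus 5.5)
Your proposal is correct and matches the paper, which simply states that the corollary is a direct consequence of Proposition \ref{PboundedMI} and Corollary \ref{CflowMI}. Your argument spells this out: apply Proposition \ref{PboundedMI} to the arc $t\mapsto D\varphi_t(T_x\mathcal L)=T_{\varphi_t(x)}\mathcal L$, then compare with an arbitrary $L\in\mathbb L_x$ via Corollary \ref{CflowMI}. Your extra remarks on completeness of the flow on the compact invariant $\mathcal L$ and on the endpoint-transversality convention are details the paper leaves implicit.
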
 
 This corollary is a direct result of Proposition \ref{PboundedMI} and Corollary \ref{CflowMI}.
 \subsection{The case of Hopf integrability} \phantom{bouh}
\begin{prop}\label{PMIHopf}
Let us assume that the conformally symplectic flow $(\varphi_t)_t$ of $T^*M$ has a Lipschitz Lagrangian invariant graph $\mathcal G$. Then for every $x\in \mathcal G$, for every $L\in\mathbb L_x$ and every $[a, b]\subset \R$, $\mathrm{MI}(D\varphi_tL)_{t\in[a, b]}\in [-4(d+1), 4(d+1)]$.
\end{prop}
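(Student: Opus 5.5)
The plan is to find a single Lagrangian subspace $L_0\in\mathbb L_x$ whose whole orbit $\big(D\varphi_t L_0\big)_{t\in\R}$ never meets the singular cycle $\Sigma$. For such an $L_0$ we have $\mathrm{MI}(D\varphi_tL_0)_{t\in[a,b]}=0$ for every $[a,b]$. Corollary \ref{CflowMI} then gives, for every $L\in\mathbb L_x$,
$$\big\vert \mathrm{MI}(D\varphi_tL)_{t\in[a,b]}\big\vert\leq 4(d+1),$$
which is the statement.

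\emph{Step 1: the invariant cone.} Write $\mathcal G=\mathrm{graph}(\eta)$ with $\eta$ a $K$-Lipschitz section. In local canonical charts consider the closed cone
$$\mathcal C_y=\{(v,w)\in T_y(T^*M);\ \|w\|\leq K'\|v\|\},$$
with $K'$ a constant, adapted to the charts, that dominates the local Lipschitz constant. Introduce the paratangent cone $P_y\mathcal G$ at $y\in\mathcal G$: the set of limits of $s_n(y_n-z_n)$ with $y_n,z_n\in\mathcal G$, $y_n,z_n\to y$ and $s_n>0$. Since $\mathcal G$ is a Lipschitz graph, $P_y\mathcal G\subset\mathcal C_y$. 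The cone $\mathcal C_y$ meets the vertical $V(y)$ only at $0$, and this is the key transversality property.

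Since $\mathcal G$ is invariant and $\varphi_t$ is $C^1$, differentiating chords gives $D\varphi_t(y)\,P_y\mathcal G\subset P_{\varphi_t(y)}\mathcal G$. Hence $D\varphi_t(y)\,P_y\mathcal G\subset\mathcal C_{\varphi_t(y)}$ for all $t$ for which the flow is defined; the flow is defined for all $t$ on the compact invariant set $\mathcal G$.

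\emph{Step 2: construction of $L_0$.} By Rademacher's theorem, $\eta$ is differentiable at Lebesgue almost every $q$. Choose $q_n\to\pi(x)$ with $\eta$ differentiable at $q_n$, and set $y_n=(q_n,\eta(q_n))$. Then $T_{y_n}\mathcal G$ is a $d$-dimensional subspace contained in $P_{y_n}\mathcal G$. It is Lagrangian: this is where the hypothesis that $\mathcal G$ is a Lipschitz Lagrangian graph is used, i.e. $\eta$ is closed in the weak sense, so $D\eta(q_n)$ is symmetric at almost every point, as in Remark \ref{rkexactLagweakKAM}. It is also the graph of a matrix with norm at most $K'$. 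By compactness of the Lagrangian Grassmannian, a subsequence of $T_{y_n}\mathcal G$ converges to some $L_0\in\mathbb L_x$, which is again a graph contained in $\mathcal C_x$.

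By Step 1, $D\varphi_t(T_{y_n}\mathcal G)\subset\mathcal C_{\varphi_t(y_n)}$ for every $t$. Continuity of $D\varphi_t$ and closedness of the cone field $\mathcal C$ then give $D\varphi_t L_0\subset\mathcal C_{\varphi_t(x)}$ for every $t\in\R$. Therefore $D\varphi_tL_0\cap V(\varphi_t(x))=\{0\}$, so the arc $t\mapsto D\varphi_tL_0$ never touches $\Sigma$ and its Maslov index on any $[a,b]$ is $0$. Corollary \ref{CflowMI} applied to $L_0$ and $L$ concludes.

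\emph{Main obstacle.} The delicate point is Step 1, namely the inclusion $D\varphi_t P_y\mathcal G\subset P_{\varphi_t y}\mathcal G\subset\mathcal C_{\varphi_t y}$, together with passing to the limit in Step 2. The limit argument works only because the cone condition is closed and uniform along the compact set $\mathcal G$. In particular $K'$ must be chosen uniformly over a finite atlas covering $\mathcal G$. Alternatively one can phrase the condition chart-free as ``meets the vertical only at $0$, with a uniform angle'', which is preserved under limits.
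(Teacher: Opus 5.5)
Your proof is correct and follows the same route as the paper. You take a Lagrangian plane $L_0\in\mathbb L_x$ that is a limit of tangent planes to $\mathcal G$ at Rademacher differentiability points, show that $D\varphi_tL_0$ stays transverse to the vertical for all $t$ so that its Maslov index vanishes, and conclude with Corollary \ref{CflowMI}; your paratangent-cone argument is a more careful justification of the step the paper only asserts, namely that $D\varphi_t$ of the limit plane remains a graph for every $t$.
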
 
\begin{proof}[Proof of proposition \ref{PMIHopf}] Let $\eta$ be the closed 1-form such that $\mathcal G=\text{graph}(\eta)$.
By Rademacher Theorem, $\eta$ is differentiable Lebesgue almost everywhere.\\
 If $q$ is such a point of differentiability, then the graph of the differential of $\eta$ is a Lagrangian subspace that we denote $L\big(q, \eta(q)\big)$ of $T_{\textrm{$\big(q, \eta(q)\big)$}}(T^*M)$ and at $\varphi_t\big(q, \eta(q)\big)$, $\eta$ is differentiable and $D\varphi_tL\big(q, \eta(q)\big)=L\Big(\varphi_t\big(q, d\eta(q)\big)\Big)$. All the Lagrangian subspaces being graphs, they are transverse to the vertical and then for every $[a, b]\subset \R$, $\text{MI}\Big(D\varphi_tL\big(q, \eta(q)\big)\Big)_{t\in[a, b]}=0$.\\
 If $q$ is not a point of differentiability of $\eta$, it is the limit of a sequence of points $q_n$ of differentiability of $\eta$. Because $L$ is Lipschitz, the sequence $\Big(L\big(q_n, \eta(q_n)\big)\Big)_{n\in\N}$ is a sequence of graphs of linear maps that are uniformly bounded. Up to extracting a subsequence, we can assume that $\Big(L\big(q_n, \eta(q_n)\big)\Big)_{n\in\N}$ converge to some Lagrangian subspace $L$ of $T_{\textrm{$\big(q, \eta(q)\big)$}}(T^*M)$ that is a graph. Then for every $t\in \R$, $D\varphi_t(L)$ is also a graph and then the Maslov index of every arc $(D\varphi_tL)_{t\in [a, b]}$ is zero. We conclude with the help of Corollary \ref{CflowMI}.

 \end{proof}

 \subsection{The dynamical Maslov index}\phantom{fish}
 We now provide the proof of Theorem \ref{ThintzeroasymptMasInd} in case of $C^1$ or Hopf integrability with attractor $\mathcal L$. Let $x\in T^*M$ and $\eta>0$ such that $D\varphi_\eta V(x)$ is transverse to the vertical. We know that $\omega(x)\subset \mathcal L$. Let us fix $\varepsilon>0$ and let us prove that there exists $T>0$ such that for every $t\geq T$,  
 $$\frac{1}{t}\text{MI}\big(D\varphi_sV(x)\big)_{s\in [\eta, \eta+t]}\in [-\varepsilon, \varepsilon].$$
Using Corollary \ref{CboundedMI} or Proposition \ref{PMIHopf}, we associate a constant $C>0$ to $\mathcal L$ and $(\varphi_t)$. We then choose $T>0$ such that $\frac{2C}{T}<\varepsilon$. There exists a neighborhood $\mathcal N$ of $\mathcal L$  such that 
$$\forall x\in \mathcal N, \forall L\in\mathbb L_x, \forall t\in [0, T], \quad \text{MI}(D\varphi_sL)_{s\in [0, t]}\in [-2C, 2C].$$
As $\omega(x)\subset \mathcal L$, there exists $\tau\geq T$ such that 
$$\forall t\geq \tau, \quad \varphi_t(x)\in \mathcal N.$$
 Let us now consider $t\geq \tau$. Then there exists $k\in\N$ and $\delta t\in [0, T]$ such that
 $t=\tau+kT+\delta t$. Then if $L\in\mathbb L_x$, we have 
 $$\text{MI}\big(D\varphi_s(L)\big)_{s\in [0, t]}=\text{MI}\big(D\varphi_s(L)\big)_{s\in [0, \tau]}+\sum_{j=0}^{k}c_j$$
 where $c_k=\text{MI}(D\varphi_sL)_{s\in[kT +\tau, t]}\in [-2C, 2C]$ and $c_j=\text{MI}(D\varphi_sL)_{s\in[jT+\tau, (j+1)T+\tau]}\in [-2C, 2C]$ for $0\leq j\leq k-1$. Hence 
 $$\frac{1}{t}\text{MI}\big(D\varphi_s(L)\big)_{s\in [0, t]}=\frac{1}{t}\text{MI}\big(D\varphi_s(L)\big)_{s\in [0, \tau]}+\frac{T}{t}\sum_{j=0}^{k}\frac{c_j}{T}. $$
 We have $\big\vert \frac{c_j}{T}\big\vert <\varepsilon$ and $\frac{T}{t}\leq \frac{1}{k}$ hence
 $$\Big\vert \frac{1}{t}\text{MI}\big(D\varphi_s(L)\big)_{s\in [0, t]}\Big\vert < \Big\vert \frac{1}{t}\text{MI}\big(D\varphi_s(L)\big)_{s\in [0, \tau]}\Big\vert + \frac{(k+1)}{k}\varepsilon.$$
This ends the Proof of Theorem \ref{ThintzeroasymptMasInd}.

As a conclusion let us state a results that is proved in a similar way and applies to arbitrary Hamiltonians:

\begin{prop}
Let $H$ be a Hamiltonian and $\varphi$ its flow.  
Let $x\in T^*M$ such that the positive orbit $\{\varphi_t(x) , \ \ t\geqslant 0\}$ is well defined, bounded and $\omega(x)$ does not have conjugate points. Then $\mathrm{DMI}(x) =0$.
\end{prop}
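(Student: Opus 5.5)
The plan is to mimic the proof of Theorem \ref{ThintzeroasymptMasInd}. There the key input was a uniform bound on Maslov indices of arcs $(D\varphi_sL)_{s\in[0,T]}$ based at points near the attractor. Here that bound will come instead from the absence of conjugate points on the compact invariant set $\omega(x)$.

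\textbf{Step 1: a bound on $\omega(x)$ for arcs of any length.} Fix $y\in\omega(x)$ and take $L=V(y)$. We look at the arc $s\mapsto D\varphi_sV(y)$, $s\in[a,b]$. Since $\omega(x)$ is invariant and contains no conjugate points, this arc meets $\Sigma$ only at $s=0$. On $[a,b]\subset(0,+\infty)$ it therefore stays in the open set of Lagrangian planes transverse to the vertical. That set fibres over each base point with contractible fibre (the symmetric matrices), so such an arc has Maslov index $0$ once its endpoints are fixed transverse. If $0\in[a,b]$, only the crossing at $s=0$ contributes. As in the proof of Proposition \ref{PLipschitzconstant}, $S_\tau(y)\sim\frac1\tau(\partial^2_{pp}H)^{-1}$ near $0$, so this crossing contributes at most $d+1$ in absolute value. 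Corollary \ref{CflowMI} then gives, for every $L\in\mathbb L_y$ and every $[a,b]$ whose endpoints give a defined index,
$$\big|\mathrm{MI}(D\varphi_sL)_{s\in[a,b]}\big|\le C:=5(d+1).$$
This bound holds uniformly for $y\in\omega(x)$ and for arbitrarily long intervals.

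\textbf{Step 2: spread the bound to a neighbourhood.} Fix $\varepsilon>0$ and choose $T$ with $2C/T<\varepsilon$. Since $\omega(x)$ is compact and the flow is continuous, I would argue that there is a neighbourhood $\mathcal N$ of $\omega(x)$ such that for all $z\in\mathcal N$, $L\in\mathbb L_z$ and $t\in[0,T]$,
$$\mathrm{MI}(D\varphi_sL)_{s\in[0,t]}\in[-2C,2C].$$
This is the step I expect to be the main obstacle. The Maslov index is not continuous when endpoints lie on $\Sigma$, so I would first control endpoint effects using Corollary \ref{CflowMI}: choose $L$ so that the endpoints are transverse, and absorb the resulting change of at most a fixed multiple of $d+1$ into the constant. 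Then a compactness argument over $\omega(x)\times\mathbb L|_{\omega(x)}\times[0,T]$ is enough, since the Maslov index is locally constant under small perturbations of the arc that keep the endpoints transverse. This is exactly the claim used without comment in the proof of Theorem \ref{ThintzeroasymptMasInd}, and Step 1 plays the role there of Corollary \ref{CboundedMI} and Proposition \ref{PMIHopf}.

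\textbf{Step 3: the averaging argument.} The positive orbit is bounded, so it accumulates only on $\omega(x)$. Hence there is $\tau\ge T$ with $\varphi_t(x)\in\mathcal N$ for all $t\ge\tau$. Write $t=\tau+kT+\delta t$ with $\delta t\in[0,T]$ and split
$$\mathrm{MI}(D\varphi_sL)_{s\in[0,t]}=\mathrm{MI}(D\varphi_sL)_{s\in[0,\tau]}+\sum_{j=0}^{k}c_j .$$
Each block $c_j$ is an arc based at a point of $\mathcal N$, transported by the flow property, so $|c_j|\le 2C$. This gives
$$\limsup_{t\to+\infty}\frac1t\big|\mathrm{MI}(D\varphi_sL)_{s\in[0,t]}\big|\le\varepsilon .$$
Since $\varepsilon$ is arbitrary, the asymptotic Maslov index exists and equals $0$. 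By Corollary \ref{CDynMasInd} it does not depend on the choice of $L$, so $\mathrm{DMI}(x)=0$.
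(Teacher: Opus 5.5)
Your proposal is correct and follows the paper's route. The paper's sketch also bounds the Maslov index uniformly, by $8(d+1)$ and independently of $T$, over windows $[t,t+T]$ once the orbit is near $\omega(x)$, using the same neighbourhood-and-compactness reasoning as in the proof of Theorem~\ref{ThintzeroasymptMasInd}, and then averages; your Steps 1--2 spell that bound out. One fix: for a general (non-Tonelli) $H$ you cannot use $S_\tau\sim\frac1\tau(\partial^2_{pp}H)^{-1}$, but the isolated crossing of $D\varphi_sV(y)$ with $\Sigma$ at $s=0$ still contributes at most $d$, since near $s=0$ this arc is the graph over the vertical of a path of symmetric $d\times d$ matrices whose index changes by at most $d$.
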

\begin{proof}[sketch of proof]
Arguing as above, for all $T>0$ there is $\tau>T$ such that for all $t>\tau$  for all $L\in \mathbb L_x$ then  $\mathrm{MI}(D\varphi_tL)_{t\in[t, t+T]}\in [-8(d+1), 8(d+1)]$.

The end of the proof is then the same as the proof of Theorem \ref{ThintzeroasymptMasInd}. 
\end{proof}

\section{ A $C^1$ integrable example  that is not Hopf integrable and thus has  conjugate points}\label{C1notHopf}

It would be nice to keep in mind the example in \cite[Section 7.2]{ArnFej2021} where  the ambient manifold is the cotangent bundle of the circle. This example was inspired by an example of Le Calvez \cite{LeCalvez1988}. We will prove that this example is $C^1$-integrable, that the  global attractor is not a graph and that there are conjugate points on the global attractor. We recall the construction.

Let $\beta>0$ be a positive number and let $\alpha\in(\beta, 2\beta)$. On $T^*\R=\R^2$, let $H$ be the quadratic Tonelli Hamiltonian 
$$H(x, y)=y^2-\beta xy.$$
Consider the sum of the Hamiltonian vector field of $H$ and of $\alpha$ times the Liouville vector field $-y\, \partial_y$:
\begin{equation}\label{EPat1}\begin{cases}
  \dot x=-\beta x+2y\\
  \dot y=(\beta-\alpha) y.
\end{cases}\end{equation}
The matrix of this linear system is $\begin{pmatrix} -\beta & 2 \\
0&\beta-\alpha
\end{pmatrix}.$ Hence  $\begin{pmatrix} 1\\0
\end{pmatrix}$ is an eigenvector for the eigenvalue $-\beta$ and $\begin{pmatrix} 1\\ \beta-\frac{\alpha}{2}\end{pmatrix}$ is an eigenvector for the eigenvalue $\beta-\alpha$. As $\alpha\in(\beta, 2\beta)$, $(0,0)$ is an attracting fixed point and the line $\R\begin{pmatrix} 1\\ 0\end{pmatrix}$ is the strong stable eigenspace.  Every solution that is not contained in an eigenspace is contained in a curve whose equation is
$$x=\frac{2}{2\beta-\alpha}y+K\vert y\vert^\frac{\beta}{\alpha-\beta}$$
where $K\neq0$, and then is not a graph if $x(0).y(0) <0$.
% \marginpar{I am under the impression there is a sign error in the paper with Jacques}

%\input{pasgraphe}

Let us choose two large real numbers $B>A>0$ and let $V:\R\rightarrow [-1, 0]$ be an even function with support in $[-B, B]$ such that $V_{|[-A, A]}=-1$, $V_{ | [-B, -A]}$ is non-increasing and $V_{|[A, B]}$ is non-decreasing. Then we add $V(x)$ to $H(x,y)$ and the equations become
\begin{equation}\label{EPat2}\begin{cases}
  \dot x=-\beta x+2y\\
  \dot y=-V'(x)+(\beta-\alpha) y.
\end{cases}\end{equation}
As the support of $V'$ is in $[-B, -A]\cup [A,B]$, the two vector fields are equal in the complement of $([-B, -A]\cup [A, B])\times \R$. As $V'_{|[-B, -A]}\leq 0$, the orbit on the $x$-axis for $x\leq -B$  is pushed to the half plane $y>0$ and then coincides with an orbit of \eqref{EPat1} which tends to $(0, 0)$. In the same way, the orbit that coincides with the $x$-axis for $x\geq B$ tends to $(0, 0)$  at $+\infty$ with an incursion into the half-plane $y<0$. Hence the union of these two orbits and $\{ (0, 0)\}$ is an invariant curve $\Gamma$ for \eqref{EPat2} that is not a graph.

Now, let us choose $D>C>B$ such that
\begin{equation}\label{Epetitdereta}\frac{2(\alpha-\beta)}{\beta}\log\frac{C}{B}>1.
\end{equation}Let $X:\R\rightarrow \R$ be an odd vector field such that
\begin{itemize}
    \item $\forall x\in [-\frac{D+C}{2}, -B]\cup [B, \frac{C+D}{2}], X(x)=-\beta x$;
    \item $X(-D)=X(D)=0$ and all the derivatives of $X$ are the same at $-D$ and $D$;
    \item on $[-D, -B]$, we have $X'(x)\in [-\beta, \beta)$;
    \item $(-D, -B]$ \big(resp. $[B, D)$\big) is a piece of unstable manifold of the equilibrium $-D$ (resp. $D$).
\end{itemize}
Then $X$ defines also a vector field on the circle ${\mathcal C}_D=[-D, D]/D\sim -D$. Let $H_X$ be the Hamiltonian that is associated to $X$ on $T^*\R=\R^2$ via the Ma\~n\'e construction
$$H_X(x, y)=\frac{1}{2}y\big(y+2X(x)\big).$$

Let us eventually define 
\[\begin{split} K(x,y)&=\big(1-\eta (x)\big)H_X(x,y)+\eta(x)\big(H(x,y)+V(x)\big)\\
&=\frac{1-\eta (x)}{2}y\big(y+2X(x)\big)+\eta(x)\big(y^2-\beta xy+V(x)\big),\end{split}\]
where $\eta:\R\rightarrow [0, 1]$ is an even bump function with support in $[-C, C]$ that is equal to 1 on $[-B, B]$ and such that
\begin{equation}\label{Epetitderetabis}\forall x\in [-C, -B], \quad 0\leq  \eta'(x)<\frac{2(\alpha -\beta)}{-\beta x},\end{equation} this choice being possible because of Equation \eqref{Epetitdereta}. $K$ also defines a Hamiltonian function on the annulus $ {\mathcal C}_D\times \R$ and, since
$$ \partial^2_yK(x, y)=\big(1-\eta(x)\big)+2\eta(x)\geq 1$$
hence $K$ is Tonelli.

Note the following:
\begin{itemize}
    \item $([-D, -B]\cup[B, D])\times\{ 0\}$ is in the zero level of $K$ and then is locally invariant by the Hamiltonian flow of $K$ and also by the conformal Hamiltonian flow $( \partial_y K, - \partial_x K -\alpha y)$;
    \item $K_{|[-B, B]\times \R}=(H+V)_{|[-B, B]\times \R}$.
\end{itemize}
Finally, the vector field $( \partial_y K, - \partial_x K -\alpha y)$ has an invariant curve $\mathcal C$ that is not a graph, which is the union of $([-D, -B]\cup[B, D])\times\{ 0\}$ and the part of $\Gamma$ that is between $x=-B$ and $x=B$.
\bigskip

We shall now prove that
\begin{prop}\label{ExampleJacques}
For the example described above, $\mathcal C$ is the global attractor and then $X_K^\lambda$ is $C^1$-integrable with an attractor that is not a graph.  
\end{prop}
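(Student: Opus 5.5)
We have to show that $\mathcal C$ is the global attractor of the conformal vector field $Y=(\partial_yK,\,-\partial_xK-\alpha y)$ on the annulus ${\mathcal C}_D\times\R$. After that, $C^1$-integrability is immediate. Indeed, $\mathcal C$ is a $C^1$ (in fact smooth) closed curve, and it is a graph over ${\mathcal C}_D$ outside $[-B,B]$. It is homotopic to the zero section and it is exact, so symplectically isotopic to it. The exactness comes from uniqueness: $\mathcal C$ will be the global attractor, and by \cite{ArnFej2021} there is at most one invariant compact Lagrangian isotopic to a graph. Alternatively, one checks directly that $\int_{\mathcal C} y\,dx=0$. That $\mathcal C$ is not a graph was already observed: $\Gamma$ makes incursions into $\{xy<0\}$.

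Since $K$ is Tonelli, Proposition \ref{PropExistGlobAttr} gives a compact global attractor $\mathcal A$. By Proposition \ref{Propuniglobattr} it suffices to show $\mathcal A=\mathcal C$. The inclusion $\mathcal C\subset\mathcal A$ holds because $\mathcal C$ is compact and invariant: every bounded full orbit lies in $\mathcal A$.

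For the reverse inclusion, we show that every orbit has $\omega$-limit set in $\mathcal C$ and every full bounded orbit lies in $\mathcal C$. The plan is a Poincaré–Bendixson / Lyapunov analysis in the two regions.

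\textbf{Region $|x|\le B$.} Here $K=H+V$ and the flow is that of \eqref{EPat2}. The divergence of $Y$ equals $-\alpha<0$, so by Bendixson–Dulac there are no periodic orbits and no cycles of orbits except possibly ones winding around the annulus. The equilibria are computed next. In $|x|\le A$ the field is linear with the single attracting equilibrium $(0,0)$. In $A\le|x|\le B$, an equilibrium needs $y=\beta x/2$ and $V'(x)=(\beta-\alpha)\beta x/2$. Since $V'$ has the sign of $x$ while the right-hand side has the opposite sign, this forces $V'=0$ and $x=0$, which is impossible.

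\textbf{Region $B\le|x|\le D$.} We use the Mañé part. On $|x|\ge C$ we have $K=H_X$, and $y=0$ is invariant. Writing $\dot y=-\partial_xK-\alpha y$, on the Mañé region $\dot y=-X'(x)y-\alpha y=-(X'(x)+\alpha)y$. Since $X'\ge-\beta$ and $\alpha>\beta$, the quantity $|y|$ decays exponentially at a rate at least $\alpha-\beta$. In the transition zone $B\le|x|\le C$ the same conclusion should hold: the term coming from $\eta'(x)\big(H+V-H_X\big)$ is controlled precisely by condition \eqref{Epetitderetabis}. This is where the numerical condition \eqref{Epetitdereta} enters, and I expect checking $\partial_y$-linearised contraction toward $\{y=0\}$ there to be the main computational obstacle. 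I would first try to show that $\frac{d}{dt}|y|\le-c|y|$ uniformly on $\{B\le |x|\le D\}$, using $V=0$ there and $X(x)=-\beta x$ on $[B,\frac{C+D}{2}]$.

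Hence every orbit spending infinite time in $|x|\ge B$ approaches $\{y=0\}\subset\mathcal C$. Orbits entering $|x|\le B$ either converge to $(0,0)$ or exit. The equilibria $\pm D$ are repelling along the circle and the fixed point $(0,0)$ is attracting. So the only possible recurrent behaviour is the circle of connections forming $\mathcal C$. By Poincaré–Bendixson on the annulus, together with the absence of invariant circles other than $\mathcal C$ (Dulac again, because the area contracts), every $\omega$-limit set lies in $\mathcal C$. A bounded full orbit has an $\alpha$-limit set that is also invariant. By the area contraction, an $\alpha$-limit set can only be contained in the sources ($\pm D$), so the orbit is one of the unstable branches composing $\mathcal C$. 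This gives $\mathcal A\subset\mathcal C$.

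Finally, conjugate points exist on $\mathcal C$ by Theorem \ref{ThintzeroasymptMasInd}'s contrapositive reasoning as in the introduction. More precisely, $\mathcal C$ is not a graph, so Theorem \ref{ThHopf} (transposed to the annulus) prevents absence of conjugate points.
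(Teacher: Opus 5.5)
Your architecture (list the zeros of $X_K^\lambda$, apply Poincar\'e--Bendixson on the annulus, use area contraction to exclude other invariant closed curves) is the paper's. However, the step carrying the real content of the example, namely that the only zeros are $(-D,0)$ and $(0,0)$, is missing in the zone $B\le|x|\le C$. That is exactly where \eqref{Epetitderetabis} is needed, and the estimate you propose there is false. On $[-C,-B]\times\R$ one has $V=0$ and $X(x)=-\beta x$, so $K=\frac{1+\eta(x)}{2}y^2-\beta xy$. Hence $\dot x=\big(1+\eta(x)\big)y-\beta x$ and $\dot y=-y\big(\frac{\eta'(x)}{2}y+\alpha-\beta\big)$. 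Since $\eta'\geq 0$ there, $|y|$ \emph{increases} as soon as $y<-2(\alpha-\beta)/\eta'(x)$. Condition \eqref{Epetitderetabis} only says $2(\alpha-\beta)/\eta'(x)>\beta|x|$, i.e.\ it yields contraction on $\{|y|<\beta|x|\}$, not on the whole strip. So no bound $\frac{d}{dt}|y|\le -c|y|$ holds on $\{B\le|x|\le D\}$, and your assertion about orbits spending infinite time in $\{|x|\ge B\}$ is unsupported. More seriously, nothing in your proposal excludes a zero in the transition zone, and such a zero would break the Poincar\'e--Bendixson conclusion. The repair is the paper's Lemma \ref{Lcriticalpoints}. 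Zeros lie on the nullcline $\partial_yK=0$, i.e.\ $y=\beta x/(1+\eta(x))$, where $0<|y|\le\beta|x|$; there \eqref{Epetitderetabis} gives $\frac{\eta'}{2}y+\alpha-\beta>0$, hence $\dot y\neq0$. On $|x|\ge C$ a zero needs $y=0$ (as $X'+\alpha>0$) and then $X(x)=0$, i.e.\ $x=\pm D$.

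Once the zeros are known, your ending is a legitimate variant of the paper's. The paper shows that $\mathcal C$ is a normally hyperbolic local attractor attracting every orbit. You instead take the global attractor provided by Proposition \ref{PropExistGlobAttr} and show that every full bounded orbit lies in $\mathcal C$, which dispenses with normal hyperbolicity. The justification needs fixing, though. First, $(\pm D,0)$ is a saddle, not a source. Second, what keeps $(0,0)$ out of the $\alpha$-limit set of any other point is that it is a hyperbolic sink, not area contraction. Poincar\'e--Bendixson for the reversed flow then forces the $\alpha$-limit to be $\{(-D,0)\}$: there are no periodic orbits by area contraction, and no loop at $(-D,0)$ since both branches of $W^u(-D,0)$ end at $(0,0)$. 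So the orbit lies in the one-dimensional $W^u(-D,0)\subset\mathcal C$. Two minor points remain. Exactness of $\mathcal C$ does not follow from the uniqueness result of \cite{ArnFej2021}; it follows from $\int_{\varphi_t\circ c}\lambda=e^{-\alpha t}\int_c\lambda$ for a loop $c$ parametrizing the invariant curve. Your last paragraph is outside the statement, and in any case Theorem \ref{ThintzeroasymptMasInd} cannot produce conjugate points.
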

\begin{proof}[Proof of Proposition \ref{ExampleJacques}]
We first need a lemma:
 \begin{lemma} \label{Lcriticalpoints} The only zeroes of $X_K^\lambda=( \partial_y K, - \partial_x K -\alpha y) $ are $(-D, 0)$ and $(0, 0)$.

\end{lemma}
Postponing the proof of Lemma \ref{Lcriticalpoints} to the end of the proof of Proposition \ref{ExampleJacques},  let us explain how we deduce     the proposition.\\
The Poincar\'e-Bendixson Theorem tells us that the $\omega$-limit set of every point contains a zero of the vector field  or is a regular invariant smooth closed curve. There is only one invariant smooth closed curve, which is $\mathcal C$. Indeed, if $\mathcal C'$ is another invariant curve, $\T\times \R\backslash (\mathcal C\cup \mathcal C')$ has at least one bounded connected component $U$. Then $U$ is invariant with finite non-zero area. As the flow decreases the areas, this is not possible. Finally, the $\omega$-limit set of every point contains a point of $\mathcal C$. As $\mathcal C$ is a normally hyperbolic attractor, this implies that the $\omega$-limit set of every point is in $\mathcal C$, and the fact that $\mathcal C$ is a local attractor implies that $\mathcal C$ is the global attractor.
\begin{proof}[Proof of Lemma \ref{Lcriticalpoints}] Let us determine the zeroes of the vector field $X_K^\lambda=( \partial_y K, - \partial_x K -\alpha y) $. As $K$ is Tonelli, the equation $\partial_yK=0$ determines a curve that is a graph, and all the zeroes of the vector field $X_K^\lambda$ are on this curve.  As $X_K^\lambda(x, y)=-X_K^\lambda(-x, -y)$, we just have to determine the zeroes on $[-D, 0]\times \R$. Moreover
\begin{itemize}
\item when $x\in [-D, -C]$, $\eta(x)=0$ and $K(x, y)=H_X(x, y)$, so $\partial_yK(x, y)= y+X(x)$ vanishes if and only if $y=-X(x)$, and $\partial_xK(x, y)+\alpha y=y(X'(x)+\alpha)=-X(x)(X'(x)+\alpha)$ vanishes only when $x=-D$ because $X'(x)+\alpha\geq \alpha-\beta>0$;
\item when $x\in [-C, -B]$, we have $X(x)=-\beta x$ and $K(x, y)=\frac{1-\eta (x)}{2}y(y-2\beta x)+\eta (x)(y^2-\beta xy)$. Hence 
$$\partial_y K(x,y)=\big(1-\eta (x)\big)(y-\beta x)+\eta (x) (2y-\beta x)=y\big(1+\eta(x)\big)-\beta x$$
and 
$$\partial_y K(x,y)=0\Longleftrightarrow y=\frac{\beta }{1+\eta(x)} x.$$
Then 
\[\begin{split}\partial_xK\big(x, y\big)+\alpha y &=\eta'(x)\big(y^2-\beta xy-\frac{y^2}{2}+\beta xy\big)-\big(1-\eta (x)\big)\beta y   -\eta(x)\beta y  +\alpha y   \\
&=y\big( \frac{\eta'(x)}{2}y+\alpha -\beta) =\frac{\beta }{1+\eta(x)} x\Big( \frac{\beta\eta'(x)x}{2\big(1+\eta(x)\big)} +\alpha -\beta\Big)\\
\end{split}
\]
Using Equation \eqref{Epetitderetabis}, we have that
$$\frac{\beta\eta'(x)x}{2\big(1+\eta(x)\big)} +\alpha -\beta\geq \frac{\beta\eta'(x)x}{4} +\alpha -\beta>\frac{\alpha-\beta}{2}>0.
$$
Hence there is no zero of $X$ with $x\in [-C, -B]$.

\item When $x\in [-B, B]$, we have $\eta(x)=1$ and $K(x, y)= H(x, y)+V(x)$. Hence $$\partial_y K(x,y)=0\Longleftrightarrow y=\frac{\beta}{2} x$$
and
$$ \partial_x K(x, \frac{\beta}{2} x)+\alpha \frac{\beta}{2} x=(\alpha-\beta)\frac{\beta}{2}x+V'(x)$$
vanishes only when $x=0$, because $x$ and $V'(x)$ have the same sign. \\
Hence $(0, 0)$ is the only point such that $x\in[-B, B]$ and $X_K^\lambda(x, y)=0$.
\end{itemize}
\end{proof}

\end{proof}

As a consequence of Theorem \ref{ThHopf} the previous example has conjugate points.

 The following proposition allows to localize conjugate points on the attractor.

\begin{prop}\label{ExampleJacquesbis}
For the example described above,  there exist conjugate points on $\mathcal C$.
\end{prop}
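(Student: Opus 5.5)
The plan is to use the fact that in dimension one a pair of conjugate points means that the image of a vertical line $V(x)$ under the linearized flow becomes vertical again after some time $T\neq 0$. So it suffices to find an orbit on $\mathcal C$ along which the tangent line to $\mathcal C$ is vertical at some point, and to transport the vertical line along it. Let $\Gamma$ be the invariant curve of \eqref{EPat2}. As computed above, the branch coming from $x\le -B$ has $x<0$ and enters the half plane $y>0$. Since the solutions of \eqref{EPat1} off the eigenspaces lie on curves $x=\frac{2}{2\beta-\alpha}y+K|y|^{\frac{\beta}{\alpha-\beta}}$ with $K\neq 0$, which are not graphs when $x\,y<0$, this branch has a vertical tangent at some point $z_1\in\mathcal C$ with $z_1\neq(0,0)$. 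By the symmetry $X_K^\lambda(x,y)=-X_K^\lambda(-x,-y)$, the same holds at $-z_1$ on the other branch. Concretely, the function $x$ restricted to the branch has a turning point there.

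Now take $z_1$ and a point $z_0=\varphi_{-s}(z_1)$ on the same orbit. Because $\mathcal C$ is invariant and $C^1$ away from the fixed points, $D\varphi_t$ maps $T_{z}\mathcal C$ to $T_{\varphi_t z}\mathcal C$. Thus $T_{z_1}\mathcal C=V(z_1)$ is vertical, while on the part of $\mathcal C$ lying on the zero section (namely $x\in[-D,-B]$) the tangent line $T\mathcal C$ is horizontal.

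The key step is to show that $V(z_0)$ is mapped to a vertical line at some other time, for instance at a point $z_0$ in the horizontal part. The problem is one dimensional, so the linearized flow acts on $\mathbb{RP}^1$, and the slope $\sigma(t)$ of $D\varphi_t(V(z_0))$ satisfies a Riccati equation $\dot\sigma=-\partial^2_{xx}K-(2\partial^2_{xy}K+\alpha)\sigma-\partial^2_{yy}K\,\sigma^2$, with $\partial^2_{yy}K\ge1$. Tonelli convexity makes the vertical direction cross $\mathbb{RP}^1$ in one sense only: a line leaves the vertical immediately and can only come back through a full half-turn.

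My approach is a rotation (intersection) argument. Compare $\ell(t)=D\varphi_t V(z_0)$ with the invariant line field $T\mathcal C$ along the orbit. Two orbits of lines in $\mathbb{RP}^1$ under the same linear cocycle never cross: if they coincide once, they coincide always. Choose $z_0$ on the horizontal part of $\mathcal C$, so that $\ell(0)$ is vertical and $T_{z_0}\mathcal C$ is horizontal. When the orbit reaches $z_1$, the tangent line $T\mathcal C$ has turned from horizontal to vertical, which is a quarter turn. In the circle $\mathbb{RP}^1$, $\ell(t)$ is trapped between $T\mathcal C$ and its perpendicular translate, and the twist sense forces $\ell$ to stay ahead of $T\mathcal C$. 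It should therefore cross the vertical, giving $D\varphi_T V(z_0)\cap V(\varphi_T z_0)\neq\{0\}$ for some $T\in(0,s]$.

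The main obstacle is making this comparison rigorous, that is, showing that $\ell$ actually returns to the vertical rather than merely approaching it. If the ordering argument is too weak, I would fall back on an explicit computation. In the region $x\in[-B,B]$ where $V'=0$ the dynamics is the linear system \eqref{EPat1}, so the linearized flow is the constant matrix, and $D\varphi_t V$ can be computed in closed form. I would then check directly that starting from the vertical at a suitable point of $\Gamma$ with $x<0<y$ before the turning point, the image becomes vertical again. The vertical direction $(0,1)$ maps to $\big(\frac{2}{2\beta-\alpha}(e^{(\beta-\alpha)t}-e^{-\beta t}),e^{(\beta-\alpha)t}\big)$, which never returns to vertical. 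This fallback therefore fails, so the crossing must come from the region where $V'\neq0$ or $\eta'\neq 0$. I would handle that by the global ordering argument together with the fact that, by Theorem \ref{ThHopf}, conjugate points exist, and they can be pushed onto $\mathcal C$ by a limiting argument along orbits accumulating on $\mathcal C$.
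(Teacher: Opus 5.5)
Your proposal has a genuine gap at the key step: the ordering argument does not produce a crossing, and your claimed conclusion $T\in(0,s]$ is false. Let $\theta_\ell(t)$ and $\theta_{\mathcal C}(t)$ be continuous lifts to $\R$ of the angles (mod $\pi$) of $\ell(t)=D\varphi_tV(z_0)$ and of $T_{\varphi_t z_0}\mathcal C$, normalized by $\theta_{\mathcal C}(0)=0$ and $\theta_\ell(0)=\pi/2$. That the two line orbits never meet gives exactly
\[
\theta_{\mathcal C}(t)<\theta_\ell(t)<\theta_{\mathcal C}(t)+\pi\qquad\text{for all }t,
\]
and nothing more. In particular, lying between $T\mathcal C$ and its perpendicular is not preserved by $D\varphi_t$. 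Because $\partial^2_{yy}K>0$, lines cross the vertical only clockwise. So $\theta_\ell<\pi/2$ for $t>0$, and $\ell$ is vertical again only when $\theta_\ell=-\pi/2$. From the horizontal part up to the first vertical tangency $z_1$ you have $\dot x>0$, so $\theta_{\mathcal C}\in(-\pi/2,\pi/2)$ on $[0,s)$. At $z_1$ the velocity points downward, so $\theta_{\mathcal C}(s)=-\pi/2$. Thus the tangent turns a quarter turn \emph{clockwise}: it is $T\mathcal C$ that leads and $\ell$ that lags, the opposite of what you assert. Hence $\theta_\ell(t)>\theta_{\mathcal C}(t)\geq-\pi/2$ on $(0,s]$, so there is \emph{no} conjugate point with $T\in(0,s]$. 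After $z_1$, $\theta_{\mathcal C}$ drops below $-\pi/2$ towards $\theta_w-\pi$, where $\theta_w=\arctan(\beta-\frac{\alpha}{2})$ is the weak stable direction at $(0,0)$. The inequality above still lets $\ell$ lag by almost $\pi$ and converge to $\theta_w$ without ever being vertical. Ordering alone cannot decide.

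The missing ingredient is quantitative closeness of $\ell$ to $T\mathcal C$ on the correct side. The paper obtains it from the hyperbolic saddle $(-D,0)$. Take $z_0=(-D+\varepsilon,0)$ on its unstable manifold, the $x$-axis. Along the axis the linearized equations are $\delta\dot x=X'(x)\delta x+\delta y$ and $\delta\dot y=-(X'(x)+\alpha)\delta y$. By the $\lambda$-lemma, while the orbit lingers near the saddle, the vertical is turned clockwise until it is arbitrarily close to the horizontal, with $0<\theta_\ell-\theta_{\mathcal C}$ small. The time from leaving a fixed neighbourhood of the saddle to slightly past $z_1$ is bounded independently of $\varepsilon$, so by continuity $\ell$ stays close to $T\mathcal C$ on that interval. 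Since $T\mathcal C$ turns strictly past the vertical right after $z_1$, $\ell$ reaches $-\pi/2$ shortly after $z_1$. Equivalently: just before $z_1$, in the linear region, $T\mathcal C$ and hence $\ell$ have negative slope, and under the constant matrix of \eqref{EPat1} every line of negative slope is carried through the vertical. This is consistent with your closed-form computation, which only shows that the vertical itself is not.

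Your last fallback does not close the gap either. Theorem \ref{ThHopf} yields conjugate points somewhere in the annulus, but nothing forces them to accumulate on $\mathcal C$ with conjugacy times bounded away from $0$ and $\infty$. Whether conjugate points must exist near the attractor is exactly the open question stated after Theorem \ref{ThHopf}. A minor slip: $V'$ vanishes on $[-A,A]$, not on $[-B,B]$.
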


%---------------------------------------------------------Xifeng put a tikz-picture here
\begin{figure}[H]
\begin{center}
\tikzset{every picture/.style={line width=0.75pt}} %set default line width to 0.75pt        
%\centering
\begin{tikzpicture}[x=0.75pt,y=0.75pt,yscale=-1,xscale=.9]
%uncomment if require: \path (0,258); %set diagram left start at 0, and has height of 258

%Curve Lines [id:da4666919710655044] 
\draw [color={rgb, 255:red, 208; green, 2; blue, 27 }  ,draw opacity=1 ]   (412,159) .. controls (342.01,296.65) and (247.22,226.25) .. (265.45,167.45) .. controls (270.66,150.64) and (285.1,134.78) .. (312,125) ;
%Curve Lines [id:da6576749174267715] 
\draw [color={rgb, 255:red, 208; green, 2; blue, 27 }  ,draw opacity=1 ]   (207,98) .. controls (225.5,59.47) and (246.42,36.68) .. (266.63,25.09) .. controls (276.08,19.68) and (285.37,16.71) .. (294.19,15.73) .. controls (331.83,11.57) and (360.83,43.63) .. (356.38,76.05) .. controls (353.85,94.44) and (340.57,112.93) .. (312,125) ;
%Straight Lines [id:da48626024763164644] 
\draw    (273.99,124.51) ;
\draw [shift={(273.99,124.51)}, rotate = 180] [color={rgb, 255:red, 0; green, 0; blue, 0 }  ][line width=0.75]    (10.93,-3.29) .. controls (6.95,-1.4) and (3.31,-0.3) .. (0,0) .. controls (3.31,0.3) and (6.95,1.4) .. (10.93,3.29)   ;
%Straight Lines [id:da5039462191693809] 
\draw [color={rgb, 255:red, 74; green, 74; blue, 74 }  ,draw opacity=1 ]   (312,125) -- (334,125) -- (341,125) -- (379,125) ;
%Straight Lines [id:da038642308521093405] 
\draw    (283.97,124.64) ;
\draw [shift={(283.97,124.64)}, rotate = 180] [color={rgb, 255:red, 0; green, 0; blue, 0 }  ][line width=0.75]    (10.93,-3.29) .. controls (6.95,-1.4) and (3.31,-0.3) .. (0,0) .. controls (3.31,0.3) and (6.95,1.4) .. (10.93,3.29)   ;
%Straight Lines [id:da6955530896143223] 
\draw [color={rgb, 255:red, 74; green, 74; blue, 74 }  ,draw opacity=1 ]   (235,124) -- (273.99,124.51) -- (283.97,124.64) -- (312,125) ;
%Straight Lines [id:da738834444243256] 
\draw    (351,125) -- (335,125) ;
\draw [shift={(333,125)}, rotate = 360] [color={rgb, 255:red, 0; green, 0; blue, 0 }  ][line width=0.75]    (10.93,-3.29) .. controls (6.95,-1.4) and (3.31,-0.3) .. (0,0) .. controls (3.31,0.3) and (6.95,1.4) .. (10.93,3.29)   ;
%Straight Lines [id:da8955339953957084] 
\draw    (351,125) -- (343,125) ;
\draw [shift={(341,125)}, rotate = 360] [color={rgb, 255:red, 0; green, 0; blue, 0 }  ][line width=0.75]    (10.93,-3.29) .. controls (6.95,-1.4) and (3.31,-0.3) .. (0,0) .. controls (3.31,0.3) and (6.95,1.4) .. (10.93,3.29)   ;
%Straight Lines [id:da9521619673572665] 
\draw [color={rgb, 255:red, 208; green, 2; blue, 27 }  ,draw opacity=1 ]   (507,128) -- (599,129) ;
%Straight Lines [id:da06692027345987217] 
\draw [color={rgb, 255:red, 208; green, 2; blue, 27 }  ,draw opacity=1 ]   (62,123) -- (150,123) ;
%Curve Lines [id:da33180679217262] 
\draw [color={rgb, 255:red, 208; green, 2; blue, 27 }  ,draw opacity=1 ]   (150,123) .. controls (158,123) and (184,129) .. (207,98) ;
%Curve Lines [id:da254628464743446] 
\draw [color={rgb, 255:red, 208; green, 2; blue, 27 }  ,draw opacity=1 ]   (412,159) .. controls (441,127) and (460,133) .. (507,128) ;
%Straight Lines [id:da23448062316650065] 
\draw [color={rgb, 255:red, 208; green, 2; blue, 27 }  ,draw opacity=1 ]   (507,128) -- (502.99,128.33) ;
\draw [shift={(501,128.5)}, rotate = 355.24] [color={rgb, 255:red, 208; green, 2; blue, 27 }  ,draw opacity=1 ][line width=0.75]    (10.93,-3.29) .. controls (6.95,-1.4) and (3.31,-0.3) .. (0,0) .. controls (3.31,0.3) and (6.95,1.4) .. (10.93,3.29)   ;
%Straight Lines [id:da8866673875279696] 
\draw [color={rgb, 255:red, 208; green, 2; blue, 27 }  ,draw opacity=1 ]   (319,238.5) -- (315,238.5) ;
\draw [shift={(313,238.5)}, rotate = 360] [color={rgb, 255:red, 208; green, 2; blue, 27 }  ,draw opacity=1 ][line width=0.75]    (10.93,-3.29) .. controls (6.95,-1.4) and (3.31,-0.3) .. (0,0) .. controls (3.31,0.3) and (6.95,1.4) .. (10.93,3.29)   ;
%Straight Lines [id:da21657261657879667] 
\draw [color={rgb, 255:red, 208; green, 2; blue, 27 }  ,draw opacity=1 ]   (264,173.5) -- (264.98,169.39) ;
\draw [shift={(265.45,167.45)}, rotate = 103.44] [color={rgb, 255:red, 208; green, 2; blue, 27 }  ,draw opacity=1 ][line width=0.75]    (10.93,-3.29) .. controls (6.95,-1.4) and (3.31,-0.3) .. (0,0) .. controls (3.31,0.3) and (6.95,1.4) .. (10.93,3.29)   ;
%Straight Lines [id:da9553576910746826] 
\draw [color={rgb, 255:red, 208; green, 2; blue, 27 }  ,draw opacity=1 ]   (356.38,76.05) -- (353.54,86.08) ;
\draw [shift={(353,88)}, rotate = 285.78] [color={rgb, 255:red, 208; green, 2; blue, 27 }  ,draw opacity=1 ][line width=0.75]    (10.93,-3.29) .. controls (6.95,-1.4) and (3.31,-0.3) .. (0,0) .. controls (3.31,0.3) and (6.95,1.4) .. (10.93,3.29)   ;
%Straight Lines [id:da19545034354025592] 
\draw [color={rgb, 255:red, 208; green, 2; blue, 27 }  ,draw opacity=1 ]   (281,19) -- (292.25,16.21) ;
\draw [shift={(294.19,15.73)}, rotate = 166.09] [color={rgb, 255:red, 208; green, 2; blue, 27 }  ,draw opacity=1 ][line width=0.75]    (10.93,-3.29) .. controls (6.95,-1.4) and (3.31,-0.3) .. (0,0) .. controls (3.31,0.3) and (6.95,1.4) .. (10.93,3.29)   ;
%Straight Lines [id:da29185174533583846] 
\draw [color={rgb, 255:red, 208; green, 2; blue, 27 }  ,draw opacity=1 ]   (150,123) ;
\draw [shift={(150,123)}, rotate = 180] [color={rgb, 255:red, 208; green, 2; blue, 27 }  ,draw opacity=1 ][line width=0.75]    (10.93,-3.29) .. controls (6.95,-1.4) and (3.31,-0.3) .. (0,0) .. controls (3.31,0.3) and (6.95,1.4) .. (10.93,3.29)   ;
%Straight Lines [id:da6756702837668493] 
\draw [color={rgb, 255:red, 74; green, 74; blue, 74 }  ,draw opacity=1 ]   (15.5,78.5) -- (37.48,99.54) -- (81.43,141.59) -- (108.5,167.5) ;
%Straight Lines [id:da6216172115865601] 
\draw    (26.49,89.02) -- (35.47,96.74) ;
\draw [shift={(36.98,98.04)}, rotate = 220.68] [color={rgb, 255:red, 0; green, 0; blue, 0 }  ][line width=0.75]    (10.93,-3.29) .. controls (6.95,-1.4) and (3.31,-0.3) .. (0,0) .. controls (3.31,0.3) and (6.95,1.4) .. (10.93,3.29)   ;
%Straight Lines [id:da41399486496714233] 
\draw    (90,149) -- (82.94,142.9) ;
\draw [shift={(81.43,141.59)}, rotate = 40.83] [color={rgb, 255:red, 0; green, 0; blue, 0 }  ][line width=0.75]    (10.93,-3.29) .. controls (6.95,-1.4) and (3.31,-0.3) .. (0,0) .. controls (3.31,0.3) and (6.95,1.4) .. (10.93,3.29)   ;
%Straight Lines [id:da5055017644320561] 
\draw [color={rgb, 255:red, 184; green, 233; blue, 134 }  ,draw opacity=1 ]   (68,122) -- (68.89,106) ;
\draw [shift={(69,104)}, rotate = 93.18] [color={rgb, 255:red, 184; green, 233; blue, 134 }  ,draw opacity=1 ][line width=0.75]    (10.93,-3.29) .. controls (6.95,-1.4) and (3.31,-0.3) .. (0,0) .. controls (3.31,0.3) and (6.95,1.4) .. (10.93,3.29)   ;
%Straight Lines [id:da08477005889437828] 
\draw [color={rgb, 255:red, 184; green, 233; blue, 134 }  ,draw opacity=1 ]   (99,123) -- (110.69,109.51) ;
\draw [shift={(112,108)}, rotate = 130.91] [color={rgb, 255:red, 184; green, 233; blue, 134 }  ,draw opacity=1 ][line width=0.75]    (10.93,-3.29) .. controls (6.95,-1.4) and (3.31,-0.3) .. (0,0) .. controls (3.31,0.3) and (6.95,1.4) .. (10.93,3.29)   ;
%Straight Lines [id:da6940075326277388] 
\draw [color={rgb, 255:red, 184; green, 233; blue, 134 }  ,draw opacity=1 ]   (81,123) -- (88.15,107.81) ;
\draw [shift={(89,106)}, rotate = 115.2] [color={rgb, 255:red, 184; green, 233; blue, 134 }  ,draw opacity=1 ][line width=0.75]    (10.93,-3.29) .. controls (6.95,-1.4) and (3.31,-0.3) .. (0,0) .. controls (3.31,0.3) and (6.95,1.4) .. (10.93,3.29)   ;
%Straight Lines [id:da31623244589804744] 
\draw [color={rgb, 255:red, 184; green, 233; blue, 134 }  ,draw opacity=1 ]   (120,123) -- (135.37,112.15) ;
\draw [shift={(137,111)}, rotate = 144.78] [color={rgb, 255:red, 184; green, 233; blue, 134 }  ,draw opacity=1 ][line width=0.75]    (10.93,-3.29) .. controls (6.95,-1.4) and (3.31,-0.3) .. (0,0) .. controls (3.31,0.3) and (6.95,1.4) .. (10.93,3.29)   ;
%Straight Lines [id:da3154312516708324] 
\draw [color={rgb, 255:red, 184; green, 233; blue, 134 }  ,draw opacity=1 ]   (167,123) -- (178.69,109.51) ;
\draw [shift={(180,108)}, rotate = 130.91] [color={rgb, 255:red, 184; green, 233; blue, 134 }  ,draw opacity=1 ][line width=0.75]    (10.93,-3.29) .. controls (6.95,-1.4) and (3.31,-0.3) .. (0,0) .. controls (3.31,0.3) and (6.95,1.4) .. (10.93,3.29)   ;
%Straight Lines [id:da42151128519840353] 
\draw [color={rgb, 255:red, 184; green, 233; blue, 134 }  ,draw opacity=1 ]   (207,98) -- (209.7,79.98) ;
\draw [shift={(210,78)}, rotate = 98.53] [color={rgb, 255:red, 184; green, 233; blue, 134 }  ,draw opacity=1 ][line width=0.75]    (10.93,-3.29) .. controls (6.95,-1.4) and (3.31,-0.3) .. (0,0) .. controls (3.31,0.3) and (6.95,1.4) .. (10.93,3.29)   ;
%Straight Lines [id:da7251482404995364] 
\draw [color={rgb, 255:red, 184; green, 233; blue, 134 }  ,draw opacity=1 ]   (266.63,25.09) -- (285.39,11.19) ;
\draw [shift={(287,10)}, rotate = 143.47] [color={rgb, 255:red, 184; green, 233; blue, 134 }  ,draw opacity=1 ][line width=0.75]    (10.93,-3.29) .. controls (6.95,-1.4) and (3.31,-0.3) .. (0,0) .. controls (3.31,0.3) and (6.95,1.4) .. (10.93,3.29)   ;
%Straight Lines [id:da00012492067797120754] 
\draw [color={rgb, 255:red, 184; green, 233; blue, 134 }  ,draw opacity=1 ]   (229,60) -- (240.07,38.77) ;
\draw [shift={(241,37)}, rotate = 117.55] [color={rgb, 255:red, 184; green, 233; blue, 134 }  ,draw opacity=1 ][line width=0.75]    (10.93,-3.29) .. controls (6.95,-1.4) and (3.31,-0.3) .. (0,0) .. controls (3.31,0.3) and (6.95,1.4) .. (10.93,3.29)   ;
%Straight Lines [id:da8819937224185717] 
\draw [color={rgb, 255:red, 184; green, 233; blue, 134 }  ,draw opacity=1 ]   (327,21) -- (346.1,27.37) ;
\draw [shift={(348,28)}, rotate = 198.43] [color={rgb, 255:red, 184; green, 233; blue, 134 }  ,draw opacity=1 ][line width=0.75]    (10.93,-3.29) .. controls (6.95,-1.4) and (3.31,-0.3) .. (0,0) .. controls (3.31,0.3) and (6.95,1.4) .. (10.93,3.29)   ;
%Straight Lines [id:da07512423752012176] 
\draw [color={rgb, 255:red, 184; green, 233; blue, 134 }  ,draw opacity=1 ]   (352,48) -- (362.8,62.4) ;
\draw [shift={(364,64)}, rotate = 233.13] [color={rgb, 255:red, 184; green, 233; blue, 134 }  ,draw opacity=1 ][line width=0.75]    (10.93,-3.29) .. controls (6.95,-1.4) and (3.31,-0.3) .. (0,0) .. controls (3.31,0.3) and (6.95,1.4) .. (10.93,3.29)   ;
%Straight Lines [id:da5309671867756891] 
\draw [color={rgb, 255:red, 184; green, 233; blue, 134 }  ,draw opacity=1 ]   (356.38,76.05) -- (356.94,96) ;
\draw [shift={(357,98)}, rotate = 268.37] [color={rgb, 255:red, 184; green, 233; blue, 134 }  ,draw opacity=1 ][line width=0.75]    (10.93,-3.29) .. controls (6.95,-1.4) and (3.31,-0.3) .. (0,0) .. controls (3.31,0.3) and (6.95,1.4) .. (10.93,3.29)   ;
%Straight Lines [id:da40122673885004634] 
\draw [color={rgb, 255:red, 184; green, 233; blue, 134 }  ,draw opacity=1 ]   (344,104) -- (335.11,117.34) ;
\draw [shift={(334,119)}, rotate = 303.69] [color={rgb, 255:red, 184; green, 233; blue, 134 }  ,draw opacity=1 ][line width=0.75]    (10.93,-3.29) .. controls (6.95,-1.4) and (3.31,-0.3) .. (0,0) .. controls (3.31,0.3) and (6.95,1.4) .. (10.93,3.29)   ;
%Shape: Circle [id:dp0605739311540634] 
\draw  [color={rgb, 255:red, 74; green, 74; blue, 74 }  ,draw opacity=1 ][fill={rgb, 255:red, 74; green, 74; blue, 74 }  ,fill opacity=1 ] (59,123) .. controls (59,121.34) and (60.34,120) .. (62,120) .. controls (63.66,120) and (65,121.34) .. (65,123) .. controls (65,124.66) and (63.66,126) .. (62,126) .. controls (60.34,126) and (59,124.66) .. (59,123) -- cycle ;
%Shape: Circle [id:dp791792433599275] 
\draw  [color={rgb, 255:red, 74; green, 74; blue, 74 }  ,draw opacity=1 ][fill={rgb, 255:red, 74; green, 74; blue, 74 }  ,fill opacity=1 ] (312,125) .. controls (312,123.34) and (313.34,122) .. (315,122) .. controls (316.66,122) and (318,123.34) .. (318,125) .. controls (318,126.66) and (316.66,128) .. (315,128) .. controls (313.34,128) and (312,126.66) .. (312,125) -- cycle ;
%Shape: Circle [id:dp3090685345971902] 
\draw  [color={rgb, 255:red, 74; green, 74; blue, 74 }  ,draw opacity=1 ][fill={rgb, 255:red, 74; green, 74; blue, 74 }  ,fill opacity=1 ] (596,129) .. controls (596,127.34) and (597.34,126) .. (599,126) .. controls (600.66,126) and (602,127.34) .. (602,129) .. controls (602,130.66) and (600.66,132) .. (599,132) .. controls (597.34,132) and (596,130.66) .. (596,129) -- cycle ;

% Text Node
\draw (100,26) node [anchor=north west][inner sep=0.75pt]   [align=left] {\textcolor[rgb]{0.72,0.91,0.53}{Conjugate vectors}};
% Text Node
\draw (487.94,81.36) node [anchor=north west][inner sep=0.75pt]  [font=\Large,rotate=-0.32,xslant=-0.01]  {$\mathcal{\textcolor[rgb]{0.82,0.01,0.11}{C}}$};
% Text Node
\draw (24,132.4) node [anchor=north west][inner sep=0.75pt]    {$( -D,0)$};
% Text Node
\draw (578,137.4) node [anchor=north west][inner sep=0.75pt]    {$( D,0)$};
% Text Node
\draw (296,132.4) node [anchor=north west][inner sep=0.75pt]    {$( 0,0)$};
% Text Node
\draw (607.46,153.91) node [anchor=north west][inner sep=0.75pt]  [rotate=-89.33]  {$=$};
% Text Node
\draw (575,176.4) node [anchor=north west][inner sep=0.75pt]    {$( -D,0)$};

\end{tikzpicture}
\end{center}
\end{figure}

\begin{proof}[Proof of Proposition \ref{ExampleJacquesbis}]
We consider the hyperbolic saddle zero $(-D, 0)$ of $X_K^\lambda$. Recall that close to this point, we have $X_K^\lambda (x, y)=\frac{1}{2}y\big(y+2X(x)\big)$, hence locally the unstable manifold is the $x$-axis and the stable manifold has for equation $y=-2X(x)$ and has a negative slope. Hence if we consider the vertical vector $(0, 1)$ at a point $(-D+\varepsilon, 0)$ on $\mathcal C$. If $\varepsilon>0$ is small enough, because of the $\lambda$-lemma, $D\varphi_T(-D+\varepsilon, 0)(0, 1)$ is very close to $T\mathcal C$ and then there will be a $T>0$ such that  $D\varphi_T(-D+\varepsilon, 0)(0, 1)$ is vertical, see the picture above.

\end{proof}

\section{An example of a Hopf integrable Hamiltonian with a non $C^1$ attractor}\label{HopfnotC1}

We aim at constructing a quadratic Hamiltonian of the form $H(x,p)= \frac 12 p^2 + a(x)p+b(x)$ on $T^*\T^1$ that has the red curve (in the picture below) as global attractor.

\begin{figure}[h!]
\begin{center}
\tikzset{every picture/.style={line width=0.75pt}} %set default line width to 0.75pt        
%\centering
\begin{tikzpicture}[x=0.75pt,y=0.75pt,yscale=-1,xscale=1]
%uncomment if require: \path (0,330); %set diagram left start at 0, and has height of 330

%Straight Lines [id:da6352280123566536] 
\draw [color={rgb, 255:red, 208; green, 2; blue, 27 }  ,draw opacity=1 ][line width=0.75]    (342,154.5) -- (385,154.5) ;
\draw [shift={(358.3,154.5)}, rotate = 0] [color={rgb, 255:red, 208; green, 2; blue, 27 }  ,draw opacity=1 ][line width=0.75]    (7.65,-2.3) .. controls (4.86,-0.97) and (2.31,-0.21) .. (0,0) .. controls (2.31,0.21) and (4.86,0.98) .. (7.65,2.3)   ;
%Straight Lines [id:da07370807697446757] 
\draw [color={rgb, 255:red, 208; green, 2; blue, 27 }  ,draw opacity=1 ]   (385,154.5) -- (427,155) ;
\draw [shift={(410.2,154.8)}, rotate = 180.68] [color={rgb, 255:red, 208; green, 2; blue, 27 }  ,draw opacity=1 ][line width=0.75]    (7.65,-2.3) .. controls (4.86,-0.97) and (2.31,-0.21) .. (0,0) .. controls (2.31,0.21) and (4.86,0.98) .. (7.65,2.3)   ;
%Shape: Circle [id:dp3732597499907384] 
\draw  [fill={rgb, 255:red, 0; green, 0; blue, 0 }  ,fill opacity=1 ] (383.5,154.5) .. controls (383.5,153.67) and (384.17,153) .. (385,153) .. controls (385.83,153) and (386.5,153.67) .. (386.5,154.5) .. controls (386.5,155.33) and (385.83,156) .. (385,156) .. controls (384.17,156) and (383.5,155.33) .. (383.5,154.5) -- cycle ;
%Straight Lines [id:da7201537618152964] 
\draw [color={rgb, 255:red, 208; green, 2; blue, 27 }  ,draw opacity=1 ]   (190,153.5) -- (342,154.5) ;
\draw [shift={(260.8,153.97)}, rotate = 0.38] [color={rgb, 255:red, 208; green, 2; blue, 27 }  ,draw opacity=1 ][line width=0.75]    (7.65,-2.3) .. controls (4.86,-0.97) and (2.31,-0.21) .. (0,0) .. controls (2.31,0.21) and (4.86,0.98) .. (7.65,2.3)   ;
%Straight Lines [id:da597013988158108] 
\draw [color={rgb, 255:red, 208; green, 2; blue, 27 }  ,draw opacity=1 ]   (164,171) -- (190,153.5) ;
\draw [shift={(180.48,159.9)}, rotate = 146.06] [color={rgb, 255:red, 208; green, 2; blue, 27 }  ,draw opacity=1 ][line width=0.75]    (7.65,-2.3) .. controls (4.86,-0.97) and (2.31,-0.21) .. (0,0) .. controls (2.31,0.21) and (4.86,0.98) .. (7.65,2.3)   ;
%Shape: Circle [id:dp6915745430360551] 
\draw  [fill={rgb, 255:red, 0; green, 0; blue, 0 }  ,fill opacity=1 ] (188.25,153.48) .. controls (188.26,152.51) and (189.06,151.74) .. (190.02,151.75) .. controls (190.99,151.76) and (191.76,152.56) .. (191.75,153.52) .. controls (191.74,154.49) and (190.94,155.26) .. (189.98,155.25) .. controls (189.01,155.24) and (188.24,154.44) .. (188.25,153.48) -- cycle ;
%Straight Lines [id:da47926089949051187] 
\draw [color={rgb, 255:red, 208; green, 2; blue, 27 }  ,draw opacity=1 ]   (556.02,169.75) -- (581,155.5) ;
\draw [shift={(572.16,160.54)}, rotate = 150.29] [color={rgb, 255:red, 208; green, 2; blue, 27 }  ,draw opacity=1 ][line width=0.75]    (7.65,-2.3) .. controls (4.86,-0.97) and (2.31,-0.21) .. (0,0) .. controls (2.31,0.21) and (4.86,0.98) .. (7.65,2.3)   ;
%Straight Lines [id:da14203331632541838] 
\draw    (427,155) -- (581,155.5) ;
%Curve Lines [id:da4450300210826097] 
\draw [color={rgb, 255:red, 208; green, 2; blue, 27 }  ,draw opacity=1 ]   (461,140.5) .. controls (523,58.5) and (462,228.5) .. (556.02,169.75) ;
%Straight Lines [id:da8298546245782212] 
\draw  [dash pattern={on 0.84pt off 2.51pt}]  (419,78.25) -- (419,233.75) ;
%Straight Lines [id:da957873681097156] 
\draw  [dash pattern={on 0.84pt off 2.51pt}]  (350,77.75) -- (350,233.25) ;
%Straight Lines [id:da2123455326802608] 
\draw  [dash pattern={on 0.84pt off 2.51pt}]  (224,78) -- (224,233.5) ;
%Straight Lines [id:da41015880979670793] 
\draw  [dash pattern={on 0.84pt off 2.51pt}]  (547,77) -- (547,232.5) ;
%Shape: Circle [id:dp0070646373135223595] 
\draw  [fill={rgb, 255:red, 0; green, 0; blue, 0 }  ,fill opacity=1 ] (579.25,155.48) .. controls (579.26,154.51) and (580.06,153.74) .. (581.02,153.75) .. controls (581.99,153.76) and (582.76,154.56) .. (582.75,155.52) .. controls (582.74,156.49) and (581.94,157.26) .. (580.98,157.25) .. controls (580.01,157.24) and (579.24,156.44) .. (579.25,155.48) -- cycle ;
%Straight Lines [id:da5514251819044182] 
\draw    (385,154.5) -- (403,173) ;
\draw [shift={(390.37,160.02)}, rotate = 45.78] [color={rgb, 255:red, 0; green, 0; blue, 0 }  ][line width=0.75]    (7.65,-2.3) .. controls (4.86,-0.97) and (2.31,-0.21) .. (0,0) .. controls (2.31,0.21) and (4.86,0.98) .. (7.65,2.3)   ;
%Straight Lines [id:da9501049052057069] 
\draw    (367,136) -- (385,154.5) ;
\draw [shift={(378.93,148.26)}, rotate = 225.78] [color={rgb, 255:red, 0; green, 0; blue, 0 }  ][line width=0.75]    (7.65,-2.3) .. controls (4.86,-0.97) and (2.31,-0.21) .. (0,0) .. controls (2.31,0.21) and (4.86,0.98) .. (7.65,2.3)   ;
%Curve Lines [id:da9491584670598074] 
\draw [color={rgb, 255:red, 208; green, 2; blue, 27 }  ,draw opacity=1 ]   (427,155) .. controls (439,153.5) and (451,154.5) .. (461,140.5) ;

% Text Node
\draw (360.57,164.84) node [anchor=north west][inner sep=0.75pt]  [font=\tiny,rotate=-358.91]  {$\left(\frac{1}{2} ,0\right)$};
% Text Node
\draw (179,165.65) node [anchor=north west][inner sep=0.75pt]  [font=\tiny]  {$( 0,0)$};
% Text Node
\draw (570.5,166.9) node [anchor=north west][inner sep=0.75pt]  [font=\tiny]  {$( 1,0)$};
% Text Node
\draw (127,45.4) node [anchor=north west][inner sep=0.75pt]  [font=\scriptsize]  {$H_{1} =\frac{1}{2}\left( p^{2} -\beta \ p\ x\right)$};
% Text Node
\draw (327,44.4) node [anchor=north west][inner sep=0.75pt]  [font=\scriptsize]  {$H_{2} =\frac{\alpha }{2} p( x-x_{0}) +\ \frac{1}{2} p^{2}$};

\end{tikzpicture}
\end{center}
\end{figure}

Let us start from a similar Hamiltonian as previously, $H_1(x,p) = \frac 12 (p^2-\beta px)$.  For a given conformal factor $\alpha>0$ the conformally symplectic (linear) vectorfield of $H_1$ is $$X_1(x,p) = 
\begin{pmatrix}
 p-\frac 12 \beta x \\ (\frac12 \beta -\alpha)p
 \end{pmatrix}
 $$ 

It has the point $(0,0)$ as only fixed point with eigenvalues $-\frac{\beta}{2}$ and $\frac{\beta}{2}-\alpha$ and respective eigenvectors
 $$V_{-\frac{\beta}{2}}=
\begin{pmatrix}
1\\0
\end{pmatrix}
;\qquad
V_{\frac{\beta}{2}-\alpha}=
\begin{pmatrix}
1\\\beta-\alpha
\end{pmatrix}
.
$$
We take $\frac{\beta}{2} < \alpha < \beta$, the first inequality guaranties that $(0,0)$ is an attractive fixed point, the second inequality is just to ensure the vector $V_{\frac{\beta}{2}-\alpha}$ points in the upper right direction to be in coherence with the figure.

We chose $\varepsilon>0$ small (to be made precise later) and $f : \T^1 \to \R$ a continuous function, smooth on $(0,1)$, such that $f = 0$ on $[0,0.5+\varepsilon ]$, $f(t) = (\beta-\alpha)t$ on $[-\varepsilon , 0]$ and 
$\int_0^1 f =0$. The graph of $f$, graph$(f)$, is the red curve.

We then consider $H_2(x,p) = \frac12 p^2 +\frac \alpha2 p(x-x_0)$ where $x_0 = \frac12$.  The conformally symplectic (linear) vector field of $H_2$ is $$X_2(x,p) = 
\begin{pmatrix}
 p+\frac 12 \alpha (x-x_0) \\ -\frac32 \alpha p
 \end{pmatrix}.
 $$ 
 It has $(\frac12,0)$ as hyperbolic fixed point. The unstable direction is $\begin{pmatrix}1 \\0\end{pmatrix}$ associated to the eigenvalue $-\frac{3\alpha}{2}$.
 
 Finally let us consider a continuous function $v : \T^1 \to \R$ that is smooth on $(0,1)$ and such that 
 \begin{itemize}
 \item $v(x) = -\frac12 \beta x$ if $x\in [0,\varepsilon]$,
 \item $v(x) = (\frac12 \beta -\alpha)(\beta-\alpha)x $ for $x\in [-\varepsilon,0]$,
 \item $v(x) = \frac 12 \alpha (x-x_0)$ for $x \in [x_0-\varepsilon,x_0+\varepsilon]$.
 \end{itemize}
The first 2 conditions imply that $v(x) \begin{pmatrix}1 \\ f'(x) \end{pmatrix} = X_1\big(x , f(x)\big)$ for $x\in [-\varepsilon, \varepsilon]$ and the last point asserts similarly that $v(x) \begin{pmatrix}1 \\ f'(x) \end{pmatrix} = X_2\big(x , f(x)\big)$ for $x\in [x_0-\varepsilon,x_0+ \varepsilon]$.

Finally, let us define our final Hamiltonian by 
$$\forall (x,p)\in \T^1, \quad H(x,p)=  \frac12 p^2 +\big(v(x)-f(x)\big)p -v(x)f(x)+\frac12 f(x)^2 -\alpha\int_0^x f(t)dt.$$
 It is quadratic in $p$ with the form sought for :  $H(x,p)= \frac 12 p^2 + a(x)p+b(x)$. Its conformally symplectic vector field is 
$$X(x,p) = \begin{pmatrix}
p+v(x)-f(x) \\
-pv'(x) + (vf)'(x) +\alpha f(x)+\big(p-f(x)\big)f'(x) - \alpha p
\end{pmatrix}.$$
In particular, on the graph of $f$, plugging in the equality $p=f(x)$ we find that 
$$X(x,f(x)) = \begin{pmatrix}
v(x) \\
v(x)f'(x)
\end{pmatrix} = v(x)\begin{pmatrix}1 \\ f'(x) \end{pmatrix}.$$
It follows that the graph of $f$ is invariant by he flow of $X$ and it is ran through with horizontal velocity $v$.

It is obvious from the formulas that $H$ is smooth except maybe on the vertical line $\{x=0\}$. However, note that by construction $X\big(x,f(x)\big) = X_1\big(x,f(x)\big)$ for $x\in [-\varepsilon, \varepsilon]$. Both Hamiltonians are of the same form $H(x,p)= \frac 12 p^2 + a(x)p+b(x)$ and $H_1(x,p)= \frac 12 p^2 + a_1(x)p+b_1(x)$. Writing the conformal Hamiltonian vector fields at points $\big(x,f(x)\big)$ we find that  for $x\in [-\varepsilon, \varepsilon]$
$$\begin{cases}
f(x)+a(x) = f(x) + a_1(x) \\ 
-a'(x)f(x) - b'(x) - \alpha f(x) = -a_1'(x)f(x) + b_1'(x) -\alpha f (x)
\end{cases}
$$
from which it follows that there is a constant $C\in \R$ such that $H(x,p) = H_1(x,p)+C$ for all $(x,p)\in  [-\varepsilon, \varepsilon]\times \R$. In particular $H$ is smooth on the whole annulus.

Finally, we prove that for a good choice of $v$, the graph of $f$ is the global attractor. The proof follows the same lines as the previous example. We first explain how to choose $v$ so that $H$ only has two fixed points $(0,0)$ and $(x_0,0)$. The equations for $X(x,p)=(0,0)$ are 
$$\begin{cases}
0 = p+v(x)-f(x) \\
0 = (f(x)-p)v'(x)+(v(x)-f(x)+p)f'(x)+\alpha f(x) - \alpha p
\end{cases}
$$
that is solved as follows 
$$\begin{cases}
p= -v(x)+f(x) \\
0 = v(x)(v'(x)+\alpha).
\end{cases}
$$
If now we chose $\varepsilon>0$ small enough and $v$ such that $|v'|<\alpha$ we find that the only fixed points of $H$ are indeed $(0,0) $ and $(0,x_0)$. We now argue as in the previous example. By the Poincaré-Bendixon Theorem, the $\omega$-limit set of any orbit has to contain a fixed point of $H$ hence intersects  graph$(f)$. Moreover, it is easily checked that graph$(f)$ is a local attractor, hence all orbits of $H$ are positively attracted to graph$(f)$.

%\begin{figure}[hbt]
%\centering
%\includegraphics[width=0.8\textwidth]{example.pdf}
%\end{figure}

%Fix $q_1\in \mathbb{T}^n$, one can define the action:
%\[
%A_t(q_1, q_2) := \min_{\substack{\gamma: [-t, 0]\rightarrow \mathbb{T}^n \\ \gamma(-t) = q_1 ,\gamma(0) = q_2} } \int_{-t}^0 e^{\alpha s} L(\gamma(s), \dot{\gamma}(s)) \ ds
%\]
%where $\gamma:  [-t, 0]\rightarrow \mathbb{T}^n$ is a continuous piecewise $C^1$ curve and $L$ is the associated Lagrangian of $H$ via the Legendre transformation. 
%
%One could have 
%\[
%\varphi_t (\partial_v L(\gamma(-t), \dot{\gamma}(-t)))  = \partial_v L (\gamma(0), \dot{\gamma}(0)).
%\]
%
%$\partial_p H(q(-t), p(-t))$ is a subdifferential of $-A_t (\cdot, q_2)$ at $q_1$, $\partial_p H(q_0, p_0)$ is a superdifferential of $A_t (q_1, \cdot)$ at $q_2$.
%
%
%We denote by $\widetilde{A}_t(\tilde{q}_1, \tilde{q}_2)$  the associated action on the universal cover. 
%
%Let $u_\alpha$ be the weak KAM solution.
%
%?? One could have the following uniform convergence on compact sets:
%\begin{equation}
%\begin{split}
%\widetilde{A}_t(q_1, q) &\longrightarrow u_\alpha (q) \qquad \text{ as } t\rightarrow +\infty\\
%d\widetilde{A}_t(q_1, q_2) &\longrightarrow d u_\alpha (q_2) \qquad \text{ as } t\rightarrow +\infty
%\end{split}
%\end{equation}

\bibliography{_biblio}
\bibliographystyle{amsplain}
\end{document}